 \def\NN{{\mathbb N}}  
\def\QQ{{\mathbb Q}} \def\RR{{\mathbb R}}  
 \def\ZZ{{\mathbb Z}}
\def\cC{{\cal C}}    
    \def\cW{{\cal W}}
\def\cF{{\cal F}}  \def\cL{{\cal L}}
\newtheorem{theorem}{{Theorem}}[section]
\newtheorem{proposition}[theorem]{{Proposition}}
\newtheorem{lemma}[theorem]{{Lemma}}
\newtheorem{corollary}[theorem]{{Corollary}}
\theoremstyle{definition}
\newtheorem{definition}[theorem]{{Definition}}
\theoremstyle{remark}
\newtheorem{remark}[theorem]{{Remark}}
\title{Anosov flows on Dehn surgeries on the figure-eight knot}
\author{Bin Yu}
\date{\today}
\begin{document}

\maketitle

\begin{abstract}
 The purpose of this paper is to  classify Anosov flows on the $3$-manifolds obtained by Dehn surgeries on the figure-eight knot. This set of  $3$-manifolds is denoted by $\{M(r)\mid r\in \QQ\}$, which contains
 the first class of hyperbolic $3$-manifolds  admitting Anosov flows
  in history, discovered by Goodman.
 Combining with the classification of Anosov flows on the sol-manifold $M(0)$ due to Plante, we have:
\begin{enumerate}
  \item  if $r\in \ZZ$, up to topological equivalence,  $M(r)$  exactly carries a unique Anosov flow,
  which is constructed by Goodman by doing a Dehn-Fried-Goodman surgery on a suspension Anosov flow;
  \item  if $r\notin \ZZ$, $M(r)$  does not carry any Anosov flow.
\end{enumerate}
As a consequence of the second result, we  get infinitely many closed orientable hyperbolic $3$-manifolds which carry taut foliations but does not carry any Anosov flow.
The fundamental tool in the proofs is the set of branched surfaces built by Schwider, which is
used to carry essential laminations on $M(r)$.
\end{abstract}

\section{Introduction}\label{s.int}
Anosov flows generalize the geodesic flows on closed Riemannian manifold with negative curvature
by an important property of these geodesic flows: the whole manifold is a hyperbolic set of the flow. In his celebrated paper \cite{An}, Anosov proved that every Anosov flow is both structure stable and ergodic. To be precise, let $X$ be a nonsingular $C^r$ ($r\geq 1$) vector field on a
closed Riemannian manifold $W$, $X$ is called an Anosov vector field if there exists an $X$-invariant splitting $TM =E^s \oplus \RR X\oplus E^u$ and some constant $C>0$, $\lambda >0$
such that:
 \begin{eqnarray*}
 \|D X_t (v)\| & \leq & C e^{-\lambda t} \|v\|, \mbox{ for any } v\in E^s \mbox{, } t\geq 0;\\
 \|D X_{-t} (v)\| & \leq & C e^{-\lambda t} \|v\|, \mbox{ for any } v\in E^u \mbox{, } t\geq 0.
 \end{eqnarray*}
  The corresponding
flow $X_t$ is called an Anosov flow.

It is natural to qualitative understanding of these flows, many works have been done in this direction, for instance, \cite{Fen1},  \cite{BF}, \cite{Bar}, \cite{Gh}, \cite{Pl}, etc. Nevertheless, even in dimension $3$, a complete classification seems to be absolutely out of reach. But for certain classes of $3$-manifolds, there are already complete classifications, for instance:
\begin{itemize}
  \item Plante \cite{Pl} and Ghys \cite{Gh}  classified Anosov flows on torus bundle over circle and circle bundle over surfaces respectively.
  \item Barbot \cite{Bar} classified Anosov flows on a class of graph manifolds, which he called generalized Bonatti-Langevin manifolds.
\end{itemize}
More recently, in \cite{YY}, Yang and the author of this paper classified non-transitive Anosov flows
on the toroidal manifolds by gluing two figure-eight knot complements. But it still is open that whether these manifolds carry  transitive Anosov flows.

Notice that all of the underling $3$-manifolds considered above are toroidal.
As far as the author knows, there does not exist any complete classification result about Anosov flows
on hyperbolic $3$-manifolds  before.
The first class of Anosov flows on hyperbolic $3$-manifolds was constructed by Goodman \cite{Goo}
by doing a type of dynamical Dehn surgery, namely Dehn-Fried-Goodman surgery, along a periodic orbit of a suspension Anosov flow $X_t^0$. The suspension Anosov flow $X_t^0$  is induced by the vector field $(0, \frac{\partial}{\partial t})$ on the sol-manifold $M(0) = T^2 \times [0,1]/ (x,1)\sim (A(x),0)$,  where $A=\left(
                          \begin{array}{cc}
                            2 & 1 \\
                            1 & 1 \\
                          \end{array}
                        \right)$ is an Anosov automorphism on $T^2$. Note that  $M(0)$ is the sol manifold endowed with a torus fibration over $S^1$  with
 monodromy map $A=\left(
                          \begin{array}{cc}
                            2 & 1 \\
                            1 & 1 \\
                          \end{array}
                        \right).$

Dehn-Fried-Goodman surgery  is a  powerful and classical technique to build new Anosov flow from old one, which was introduced by Goodman \cite{Goo} and Fried \cite{Fri}.\footnote{In fact, Goodman's surgery  and Fried's surgery were introduced   in two different ways.
  An Anosov flow obtained by doing Goodman's surgery is smooth,  but is not easy to compare the behaviours of the flowlines with the old Anosov flow. Conversely, an Anosov flow obtained by doing Fried's surgery only is  a topological Anosov flow, but is easy to compare  the behaviours of the flowlines with the old Anosov flow. In his thesis \cite{Sha},  Shannon   proved that up to topological equivalence, these two surgeries are the same.}
  One can find the related  details in \cite{Goo}, \cite{Fri} and \cite{Sha}. Here we only roughly describe it. Let $\phi_t$ be a transitive Anosov flow on a closed $3$-manifold $W$ and $\alpha$ be a periodic orbit of $\phi_t$. Dehn-Fried-Goodman surgery can ensure that for every $k\in \ZZ$, one can build an Anosov flow $\phi_t^k$ on $W(\alpha,k)$ so that $\phi_t^k$ and $\phi_t$ are topologically equivalent by withdrawing $\alpha$ in both of the manifolds before and after the surgery. Here $W(\alpha, k)$ is the $3$-manifold obtained by doing  $k$-Dehn surgery on $W$ along $\alpha$.

Now let us explain Goodman's examples more precisely.
Let  $\gamma$ be the periodic orbit of $(M(0), X_t^0)$
associated to the origin $O$ which is a fixed point of $A=\left(
                          \begin{array}{cc}
                            2 & 1 \\
                            1 & 1 \\
                          \end{array}
                        \right)$, and $V$ be a small tubular neighborhood of $\gamma$.
Further we set $N= M(0) - \mbox{int}(V)$ which is homeomorphic to the figure-eight knot complement with a torus boundary $T$,
where $\mbox{int}(V)$ means the interior of $V$.
Up to isotopy, there is  a unique circle $l$ and a unique circle $m$
on $T$ so that  $l$ bounds a once-punctured
torus in $N$,  and we can get $S^3$ if we glue $N$ to $V$ along their boundaries by sending $m$
to a meridian circle of $V$. We fix two orientations on $m$ and $l$.
\begin{definition}\label{d.Mr}
For every $r=\frac{q}{p}\in \QQ$, $M(r)$ is defined to be the manifold obtained by filling  the solid torus $V$ on the figure-eight knot complement $N$ so that  a circle in $T=\partial N$ parameterized by $pl +q m$  bounds a disk in $V$.
\end{definition}
 Note that $\{M(r)\mid r\in \QQ\}$
is the set of the $3$-manifolds obtained by doing Dehn surgeries on the figure-eight knot.
For every $r\in \ZZ$,  Goodman constructed her Anosov flow $X_t^r$ on $M(r)$ by doing $r$-Dehn-Fried-Goodman surgery
along the periodic orbit $\gamma$ of $X_t^0$. A classical result due to Thurston \cite{Thu} is that  $M(r)$ is  hyperbolic except when $r\in \{0, \pm1, \pm2, \pm3, \pm4\}$. Therefore, each $X_t^r$ ($|r|>4$) is an Anosov flow on a hyperbolic $3$-manifold $M(r)$ so that they are two oriented circles.

 Each Anosov flow $X_t^r$ ($r\in \ZZ$ and $|r|>4$) shares
several impressive properties, for instance,
\begin{enumerate}
  \item by Fenley \cite{Fen1}, $X_t^r$ is  skew $\RR$-covered;
  \item by Fenley \cite{Fen1} and Barthelme-Fenley \cite{BaF},  each periodic orbit of
$X_t^r$ is  isotopic to infinitely many periodic orbits of $X_t^r$.
\end{enumerate}

\subsection{Main results}\label{ss.mainresult}
It is natural to ask if there exists any other Anosov flows on some $M(r)$ with $r\in \ZZ$, and
if there exists any Anosov flows on some $M(r)$ with $r\in \QQ\setminus\ZZ$.
These questions motivated  to classify Anosov flows on the manifolds set $\{M(r)\mid r\in \QQ\}$.
Our main result is:

\begin{theorem}\label{t.main}
Let $M(r)$ ($r\in \QQ$) be the closed $3$-manifold obtained by doing $r$-Dehn surgery on the figure-eight knot in $S^3$. Then,
\begin{enumerate}
  \item if $r\in \ZZ$, up to topological equivalence, $M(r)$ carries a unique  Anosov flow $X_t^r$;
  \item if $r\notin \ZZ$, $M(r)$ does not carry any Anosov flow.
\end{enumerate}
\end{theorem}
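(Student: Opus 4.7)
The plan is to analyze any hypothetical Anosov flow on $M(r)$ through its weak stable and weak unstable foliations, viewed as a transverse pair of essential laminations, and to test each such pair against Schwider's list of branched surfaces that carry essential laminations in $M(r)$.

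Given an Anosov flow $\phi_t$ on $M(r)$, the first step is to pass from the weak stable foliation $\cF^s$ and the weak unstable foliation $\cF^u$ to transverse essential laminations $\Lambda^s$ and $\Lambda^u$, obtained by splitting along the annular (or M\"obius) leaves that carry periodic orbits. These two laminations are transverse, each is carried by an essential branched surface in $M(r)$, and their intersection records the orbits of $\phi_t$. By Schwider's theorem, every essential branched surface in $M(r)$ arises (after canonical splittings) from a finite list of branched surfaces in the figure-eight knot exterior $N$, each equipped with a boundary train track on the peripheral torus $T$ compatible with the Dehn filling slope $pl+qm$.

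The analysis then splits according to whether $r\in\ZZ$. For $r\notin\ZZ$, I would examine every candidate pair $(B^s,B^u)$ from Schwider's list and show, using the boundary train tracks on $T$ together with the transversality constraints along orbits, that the filling slope is forced to satisfy $p=\pm 1$. The leverage for this step comes from simultaneously matching two branched surfaces along $T$ and from the rigidity imposed by $\Lambda^s\cap\Lambda^u$ being the orbit foliation of an honest Anosov flow (so: no spherical or disk leaves, no Reeb-type obstructions, two-dimensional transverse intersection, compatible coorientations, etc.). Combined with Plante's result for $r=0$ and Thurston's list of exceptional fillings $|r|\le 4$, this yields part (2). For $r\in\ZZ$ the same analysis should isolate a unique viable pair of branched surfaces, which must coincide with the pair of weak foliations of Goodman's flow $X_t^r$; topological equivalence to $X_t^r$ then follows because, in dimension three, the conjugacy class of an Anosov flow is determined by its pair of weak foliations together with the orbit structure, both of which are pinned down by $(B^s,B^u)$.

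The main obstacle will be the slope-arithmetic step in the non-integer case. Since Schwider's figure-eight branched surfaces do carry essential laminations for many non-integer $r$, integrality of the filling slope cannot be extracted from the existence of a single essential lamination; the extra rigidity must come from insisting on a \emph{transverse} pair with the additional structure of Anosov weak foliations. Translating this combinatorial/dynamical constraint into an arithmetic constraint on $(p,q)$ by walking through Schwider's list case-by-case, and ruling out every possible simultaneous carrying pattern when $p\ne\pm 1$, is where I expect the proof to be most intricate.
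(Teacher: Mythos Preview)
Your proposal diverges from the paper's proof in a way that leaves genuine gaps rather than offering an alternative route.

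First, the paper works with a \emph{single} Anosov foliation, not a transverse pair. The key constraint is not transversality of $\cF^s$ and $\cF^u$, but the structure of the complement $W(B)=M(r)\setminus\mbox{int}(N(B))$: since every leaf of an Anosov foliation is a plane, cylinder, or M\"obius band, each component of $W(B)$ must be a product $I$-bundle of a very restricted type (Proposition~\ref{p.Anobracomp}). This alone disposes of most of Schwider's 39 branched surfaces. Your plan to analyze all pairs $(B^s,B^u)$ would square the casework without supplying a replacement for this leverage; and ``transversality constraints along orbits'' is not a combinatorial condition on branched surfaces, since a branched surface carries many laminations.

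Second, the integrality of $r$ is not extracted from slope arithmetic on boundary train tracks. For the surviving branched surfaces $B_6,B_7,B_8,B_9$, the paper locates a periodic orbit $\omega$ sitting in an annular sector whose core is slope-$\infty$ on $T$; in $M(q/p)$ this forces $\omega$ to be homotopic to $c(V_0)^{\pm p}$. Fenley's homotopic indivisibility theorem (Theorem~\ref{t.fenley}) then gives $|p|\le 2$, and $|p|=2$ would force non-coorientability, contradicting the explicit transverse orientations on $B_6,\dots,B_9$ (Proposition~\ref{p.triori}). You do not invoke Fenley's theorem or the transverse-orientability check, and without them there is no visible mechanism forcing $p=\pm 1$.

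Third, your uniqueness argument for $r\in\ZZ$ is not viable as stated. Identifying the carrying branched surface does not identify the foliation, and the assertion that ``the conjugacy class of an Anosov flow is determined by its pair of weak foliations together with the orbit structure'' is not a citable theorem that finishes the job. The paper instead shows that the periodic orbit $\omega$ is isotopic to the core of $V$, performs a DA surgery along $\omega$ to produce an expanding attractor on the figure-eight complement $N$, invokes the Yang--Yu classification (Theorem~\ref{t.YY}) to pin down this attractor, and then undoes the surgery and applies Plante's theorem on $M(0)$ to conclude $X_t\simeq X_t^r$ (Lemma~\ref{l.knotint}). None of these ingredients appear in your outline.
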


\begin{remark}\label{r.main-item1}
 \begin{enumerate}
   \item  Item $1$ of Theorem \ref{t.main} provides  the complete classification about Anosov flows on an infinitely many hyperbolic $3$-manifolds set $\{M(r)\mid r\in \ZZ \setminus \{0, \pm1, \pm2, \pm3, \pm4\}\}$. To the best of our knowledge, it is the first  complete classification result about Anosov flows on hyperbolic $3$-manifolds.
   \item Converse to a type of  flexibility  about Anosov flows on a class of hyperbolic $3$-manifolds indicated by a recent work of Bowden  and  Mann \cite{BM}, which shows that
for every $n\in \NN$, there exists a hyperbolic $3$-manifold which at least carries $n$ pairwise non-topologically equivalent Anosov flows, item $1$ of Theorem \ref{t.main} shows a type of rigidity about Anosov flows on the hyperbolic $3$-manifolds set $\{M(r)\mid r\in \ZZ \setminus \{0, \pm1, \pm2, \pm3, \pm4\}\}$.
 \end{enumerate}
\end{remark}

\begin{remark}\label{r.main-plante}
In Theorem \ref{t.main}, the classification of the Anosov flows on the sol-manifold $M(0)$ was done by Plante \cite{Pl} (see also Theorem \ref{t.plante} and Remark \ref{r.plante}), and we will only prove the theorem on the manifolds set $\{M(r)\mid r\in \QQ \setminus \{0\}\}$. In fact, our proof will strongly depend on the classification  on $M(0)$ due to Plante.
\end{remark}

It is also fundamental to understand which hyperbolic $3$-manifolds do not admit any Anosov flows.
One of the most natural obstructions  for  the existence of Anosov flows is  the existence of taut foliation. Let us be more precise.
The \emph{stable/unstable foliation} of a three dimensional Anosov flow (abbreviated  as \emph{Anosov foliation}) is the union of the weak stable/unstable manifolds of the Anosov flow.
Since there does not exist any compact leaf,   an Anosov foliation
always is a taut foliation. In fact, there are a decent number of other obstructions for the existence of Anosov flows coming from foliation theory, contact and symplectic geometry, for instance:
\begin{enumerate}
  \item due to its definition, Anosov foliation should does not admit any invariant transverse measure and satisfies that the Euler number of the corresponding tangent plane field is $0$;
  \item Anosov flows always can induce a type of ``supporting contact structures" (see \cite{ET}, \cite{Mi} or \cite{Ho}), which should be tight,  symplectically fillable, and have no Giroux torsion, etc. \footnote{The Reeb flows of those supporting contact structures can be chosen to be transverse to the Anosov foliations. See Example 2 of  Zung \cite{Zu} for more details about this fact. Furthermore, Zung \cite{Zu} developed an interesting theory of such foliations.}
\end{enumerate}
All of the relationships above between Anosov flow and contact and symplectic geometry can be found in Hozoori \cite{Ho}.

Nevertheless, it is still difficult to build concrete hyperbolic $3$-manifolds so that they do not admit any Anosov flows by using these obstructions. As far as the author knows, the only known examples
are the hyperbolic $3$-manifolds which do not carry any taut foliations. In history,
Roberts, Shareshian and Stein \cite{RSS} and Calegari and Dunfield \cite{CD} independently
constructed infinitely many hyperbolic $3$-manifolds as the first classes of this types of examples.
Hence, these hyperbolic $3$-manifolds do not carry  any Anosov flows.
It is natural to ask: do there exist some  hyperbolic $3$-manifolds which carry taut foliations but
do not carry any Anosov flows? In fact, people tend to believe that the answer is positive partially
because as we talked above, Anosov foliations are very special taut foliations.

 Notice that by using a kind of standard surgery in foliation theory, filling monkey saddles, it is not difficult to show that
each $M(r)$ ($r\in \QQ$) carries a taut foliation (see, for instance, Gabai \cite{Ga1}). Then,
 Item $2$ of Theorem \ref{t.main} can help us to  positively  answer the question:

\begin{corollary} \label{c.main}
There are infinitely many closed orientable hyperbolic $3$-manifolds $\{M(r)\mid r\in \QQ \setminus\ZZ\}$ so that they
carry taut foliations but do not carry Anosov flows.
\end{corollary}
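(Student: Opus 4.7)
The plan is to combine three ingredients that are already in place in the paper. First, I would invoke the consequence of Thurston's hyperbolic Dehn surgery theorem recalled in the introduction: $M(r)$ is hyperbolic for every $r \in \QQ \setminus \{0, \pm 1, \pm 2, \pm 3, \pm 4\}$, and in particular for every $r \in \QQ \setminus \ZZ$. Second, I would use the standard filling-monkey-saddles construction (as in Gabai \cite{Ga1}, already cited in the paragraph preceding the corollary) to produce a taut foliation on $M(r)$ for every $r \in \QQ$. Third, I would apply item 2 of Theorem \ref{t.main} to conclude that no $M(r)$ with $r \notin \ZZ$ carries any Anosov flow.

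To turn this into the statement of the corollary, one must check that the family $\{M(r) : r \in \QQ \setminus \ZZ\}$ contains infinitely many pairwise non-homeomorphic manifolds. Here I would appeal once more to Thurston's theorem, combined with Mostow rigidity: by the hyperbolic Dehn surgery theorem the volumes $\Vol(M(r))$ approach $\Vol(N)$ strictly from below as $|r| \to \infty$ and take infinitely many distinct real values, while Mostow rigidity makes $\Vol$ a topological invariant of a closed hyperbolic $3$-manifold. Consequently only finitely many slopes $r$ can yield any given manifold, so $\{M(r) : r \in \QQ \setminus \ZZ\}$ is infinite up to homeomorphism.

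No step in this argument is genuinely hard; the substantive input is the non-existence part of Theorem \ref{t.main}, which is the main technical result of the paper. Granted that theorem, together with the routine hyperbolicity and volume inputs from Thurston and the standard taut foliation construction on $M(r)$, the corollary follows immediately by juxtaposing the three ingredients.
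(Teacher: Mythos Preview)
Your proposal is correct and matches the paper's approach: the paper does not give a separate proof of Corollary~\ref{c.main} but simply derives it from the paragraph immediately preceding it, combining Gabai's taut-foliation construction on every $M(r)$, Thurston's hyperbolicity for $r\notin\{0,\pm1,\pm2,\pm3,\pm4\}$, and item~2 of Theorem~\ref{t.main}. Your additional care in verifying that infinitely many of the $M(r)$ are pairwise non-homeomorphic (via hyperbolic volume and Mostow rigidity) is a point the paper leaves implicit, but it is a welcome clarification rather than a different argument.
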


\subsection{The brief introduction for the proof of Theorem \ref{t.main}} \label{ss.bintro}
The proof of Theorem \ref{t.main} strongly depends on the classification of essential laminations on Dehn surgeries on the figure-eight knot  due to Schwider \cite{Sch}. In his thesis \cite{Sch}, Schwider uses the spine decomposition on the figure-eight knot complement $N$ introduced by Thurston \cite{Thu}, and
 normal branched surface theory adapted by Brittenham \cite{Bri2} and Gabai \cite{Ga2} to find $39$ types of essential branched surfaces to carry any essential lamination on $M(r)$.

Now let us briefly introduce the strategy  about the proof of Theorem \ref{t.main}.
Let $\cF$ be  an Anosov foliation on $M(r)$. By splitting finitely many leaves of $\cF$,
we get an essential lamination $\cL$ on $M(r)$, which is called by an \emph{Anosov lamination}.
$\cL$ should be fully carried by some branched surface $B$ in Schwider's list.
Note that essentially depending on Theorem 3.1 of \cite{Bri1} (see also Theorem \ref{t.norsoltor}), each  branched surface $B$ in Schwider's list satisfies that
either each sector of $B\cap V$ is a disk, or there exists some annuli sector in $B\cap V$.
In the first case, we will show that each of this kind of branched surfaces can not fully carry any Anosov lamination by
discussing the manifold $W(B)=M-\mbox{int}(N(B))$ and using some
necessary conditions for the complement of a branched surface $B$ carrying Anosov laminations.
To each branched surface $B$ in the second case,  by some careful analysis depending on  some facts about three dimensional Anosov flows,  we will conclude that
\begin{enumerate}
  \item if $r\in \QQ\setminus \ZZ$, each $M(r)$  does not carry any Anosov lamination which is fully carried by $B$;
  \item if $r\in \ZZ\setminus \{0\}$ and $M(r)$   carries an Anosov lamination,  then there exists a periodic
  orbit $\omega$ in the associated Anosov flow $Y_t$ so that $M(r)-U(\omega)$ is homeomorphic to
  the figure-eight knot complement $N$, where $U(\omega)$ is a small open tubular neighborhood of $\omega$ in $M(r)$.
\end{enumerate}
We can see that now
item $1$  of Theorem \ref{t.main} is proved, and what left is to  show that $Y_t$ and $X_t^r$ are topologically equivalent. Finally, we will finish the proof basically depending on the
classification of Anosov flows on sol-manifolds due to Plante \cite{Pl}, and
the classification of expanding attractors on the figure-eight knot complement $N$ due to Yang and the author \cite{YY}.

\subsection{Outline of the article}\label{ss.outline}
The paper is organized as follows.
 For convenience to a reader, in Section \ref{s.Schwider},
we will introduce some backgrounds about Schwider's classification
of essential laminations on $M(r)$.
In Section \ref{s.Schwiderlist},
we will introduce  Schwider's $39$ branched surfaces and the corresponding background $3$-manifolds. In this section, we will also  survey the idea and the points behind  Schwider's construction.
In Section \ref{s.trori}, we will introduce  transverse orientations of
branched surfaces, and prove that some branched surfaces in Schwider's list are transversely orientable.
In Section \ref{s.Anosov}, we will introduce some facts about three dimensional Anosov flows,  and we will also  get some results as the preparations to the proof of Theorem \ref{t.main}.
In the last section (Section  \ref{s.proof}), under the preparations above, we will finish the proof of Theorem \ref{t.main}.

\section*{Acknowledgments}
We would like thank Yi Shi,   Pierre Dehornoy and Surena Hozoori for their many helpful comments and suggestions. In particular, we thank Yi Shi for his very valuable suggestions about the proof of Lemma \ref{l.infper}, and Surena Hozoori for his numerous and helpful suggestions to add some appropriate remarks about the relationships between Anosov flow and contact topology. 
The author is supported by the National Natural Science Foundation of China (NSFC 11871374).

\section{Normal form of essential laminations on $M(r)$}\label{s.Schwider}
In this section, we will introduce some backgrounds about Schwider's classification of essential laminations on $M(r)$. The basic knowledge about foliation and lamination theory can be found in
\cite{Cal}.

\subsection{Essential laminations and essential branched surfaces}
Essential laminations were introduced as a generalization of incompressible surfaces and taut foliations
by Gabai and Oertel \cite{GO}:
\begin{definition}\label{d.eslam}
Let $\cL$ be a two dimensional lamination in a closed $3$-manifold $M$.
We say that $\cL$ is \emph{essential} if it satisfies:
\begin{enumerate}
  \item the inclusion of each leaf of the lamination into $M$ induces an injection on $\pi_1$;
  \item the path closure of $M-\cL$, $M_{\cL}$ is irreducible;
  \item $\cL$ has no sphere leaves;
  \item $\cL$ is end-incompressible.
\end{enumerate}
\end{definition}

A \emph{disk-with-end} $D$ is a disk with a closed arc removed from its boundary.
Let $\partial D$ be the open arc of the boundary which remains.
If for every proper embedding $d: (D, \partial D)\to (M_{\cL}, \partial M_{\cL})$,
there is a proper embedding $d': (D, \partial D)\to  \partial M_{\cL}$
with $d=d'$ on $\partial D$. Then we say that the lamination $\cL$ is \emph{end-incompressible}.
Here the properness of the embedding of $D$ means that the end of $D$ is mapped to an end of
$M_{\cL}$.

One of the main results of \cite{GO} is that an essential lamination can be carried by
a special type of compact two dimensional complexes, say essential branched surfaces.  Let us be more precise.

A \emph{branched  surface} $B$ in a closed $3$-manifold $M$ is  a compact two complex which can be decomposed into the union of finitely many
compact smooth embedded surfaces, locally modeled by Figure \ref{f.locbranch}.
The union of the branched points is called by the \emph{branched locus} of $B$,
which is the union of finitely many smooth immersed circles in $M$.
\begin{figure}[htp]
\begin{center}
  \includegraphics[totalheight=2.8cm]{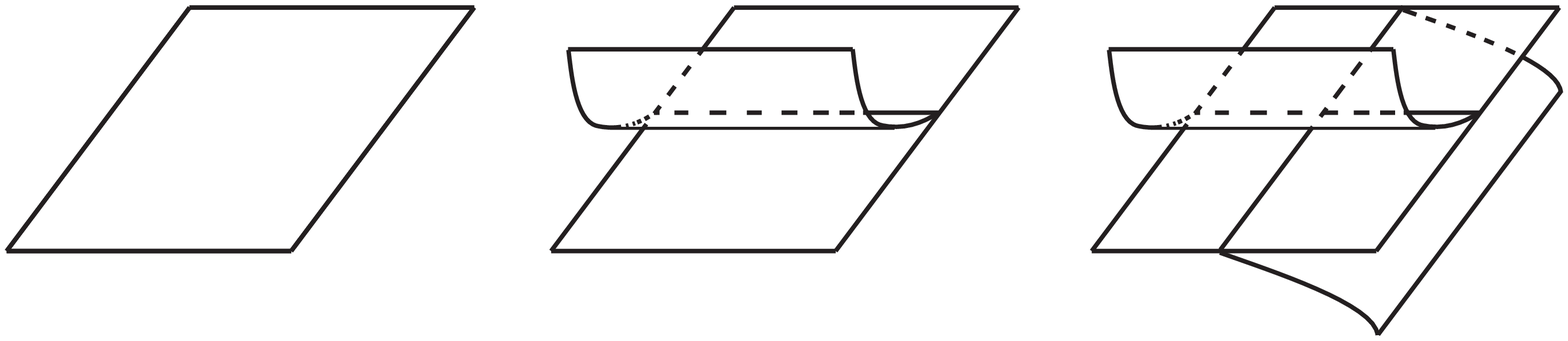}\\

  \caption{local charts of branched surface}\label{f.locbranch}
\end{center}
\end{figure}

Let $B$ be a branched surface embedded in $M$. We denote by $N(B)$ a regular neighborhood of $B$
with a (semi-)$I$-bundle  so that there exists a projection $\pi: N(B)\to B$ which collapses every interval fiber to a point.
The boundary of $N(B)$ is the union of two compact surfaces $\partial_h N(B)$ and $\partial_v N(B)$.
These two sub-surfaces can be characterized as follows.
If an interval fiber of $N(B)$ meets $\partial N(B)$,
set $P$ belongs to the intersection  of the interval fiber and $\partial N(B)$,
then $P\in \partial_h N(B)$ if the interval fiber meets
$\partial_h N(B)$ transversely and $P\in \partial_v N(B)$ otherwise.
We call $\partial_h N(B)$ the \emph{horizontal boundary} of $N(B)$ and $\partial_v N(B)$ the
 \emph{vertical boundary} of $N(B)$.
  See Figure \ref{f.regnghd} as an illustration. Note that $\partial_v N(B)$ satisfies that,
  \begin{enumerate}
    \item $\pi(\partial_v N(B))$ exactly is the branched locus of $B$;
    \item $\partial_v N(B)$ is the union of finitely many pairwise disjoint annuli when $M$ is orientable.
  \end{enumerate}
 \begin{figure}[htp]
\begin{center}
  \includegraphics[totalheight=4.4cm]{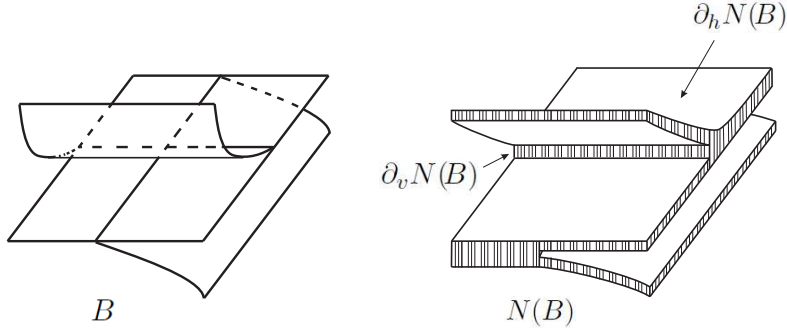}\\

  \caption{the horizontal boundary $\partial_h N(B)$ and the vertical boundary $\partial_v N(B)$ of $N(B)$}\label{f.regnghd}
\end{center}
\end{figure}

 Set $W(B)=M-\mbox{int} (N(B))$.
 Notice that $\partial W(B)=\partial N(B)$, then we can set $\partial W(B)=\partial_h W(B) \cup \partial_v W(B)$ so that $\partial_h W(B)=\partial_h N(B)$ and $\partial_v W(B)=\partial_v N(B)$.
 We say that there is a \emph{monogon} in $W(B)$ if there exists a disk $D\subset W(B)$ with $\partial D= D\cap \partial W(B)=\alpha\cup \beta$ where $\alpha \subset \partial_v N(B)$ is an interval and $\beta \subset \partial_h N(B)$.
A \emph{sink disk} is a disk $D$ in $B$ so that $\partial D$ is the union of several branched intervals and the branch direction of every smooth arc in its boundary points into the disk.

 \begin{definition}\label{d.ebra}
A branched  surface $B$ in a closed $3$-manifold $M$ is \emph{essential}  if
it satisfies the following conditions.
\begin{enumerate}
  \item $\partial_h N(B)$ is incompressible in the closure of $M- N(B)$, and
  no component of $\partial_h N(B)$ is a sphere.
  \item There is no monogon in the closure of $M- N(B)$.
  \item $B$ does not carry a torus that bounds a solid torus.
  \item $B$ has no sink discs.
\end{enumerate}
 \end{definition}

 We say a lamination $\cL$\footnote{$\cL$ possibly is a foliation.} is \emph{carried} by a branched surface $B$ if,
 after splitting finitely many leaves of $\cL$ if necessary, $\cL$ can be isotoped into $N(B)$
 so that it intersects to the $I$-bundles transversely.
 Moreover, we say that $\cL$ is \emph{fully carried} by $B$ if $\cL$ intersects every fiber of $N(B)$.

 \begin{theorem}\label{t.carry}
 A lamination $\cL$ is essential if and only if it is fully carried by an essential branched surface
 $B$ in $M$. Moreover, every essential branched surface $B$ in $M$ carries some essential lamination
 $\cL$ on $M$.
 \end{theorem}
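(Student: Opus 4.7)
The plan is to follow the standard approach of Gabai--Oertel~\cite{GO}, together with the sink-disk refinement of Li that is needed to promote ``carried'' to ``fully carried'' under Definition~\ref{d.ebra}.

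For the ``if'' direction, suppose $\cL$ is fully carried by an essential branched surface $B$. I would verify the four conditions of essentiality of $\cL$ one by one. The $\pi_1$-injectivity of leaves follows because a compressing disk for a leaf, pushed along the $I$-fibration of $N(B)$ towards $\partial_h N(B)$, would yield either a compressing disk for $\partial_h N(B)$ in $W(B)$ or a monogon, both of which are forbidden. Absence of sphere leaves follows from the corresponding condition on $\partial_h N(B)$: after undoing the splitting, a sphere leaf would descend to a sphere component of $\partial_h N(B)$. End-incompressibility is verified by pushing an end-compressing disk-with-end through the $I$-fibration to obtain a monogon in $W(B)$. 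Finally, irreducibility of $M_{\cL}$ is established by an innermost-disk argument, using condition~3 of Definition~\ref{d.ebra} to rule out the remaining obstruction.

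For the ``only if'' direction, given an essential lamination $\cL$, the construction is classical: take a foliated $I$-bundle neighborhood $N(\cL)$, transverse to $\cL$, and form $B := N(\cL)/\sim$ by collapsing each interval fiber to a point. Then $\cL$ is tautologically fully carried by $B$, and the four conditions of Definition~\ref{d.ebra} for $B$ translate back from the corresponding conditions for $\cL$: incompressibility and non-sphericity of $\partial_h N(B)$ from $\pi_1$-injectivity and the no-sphere-leaf condition; absence of monogons from end-incompressibility; absence of carried tori bounding solid tori from the irreducibility of $M_{\cL}$ combined with $\pi_1$-injectivity. A final Brittenham-style splitting along arcs of the branch locus removes any residual sink discs without disturbing the other conditions.

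The principal obstacle is the ``moreover'' assertion, namely the production of an essential lamination fully carried by a given essential $B$. The plan is to find a strictly positive integer weight $w$ on the sectors of $B$ satisfying the branch equations; the no-sink-disk hypothesis (condition~4) is precisely what guarantees the existence of such a strictly positive solution, via the standard combinatorial argument of Li. From $w$ one builds an immersed surface $S_w$ hitting every sector of $B$, and iterated splitting of $S_w$ along a measured train track on each sector produces an inverse system whose limit $\cL$ is a lamination fully carried by $B$. Strict positivity of $w$ is exactly what forces $\cL$ to meet every $I$-fiber of $N(B)$; the essentiality of $\cL$ is then verified by invoking the conclusions of the ``if'' direction already proved above.
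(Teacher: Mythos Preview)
The paper does not actually prove Theorem~\ref{t.carry}; it is stated as background and attributed in the subsequent remark to Gabai--Oertel~\cite{GO} for the equivalence and to Li for the ``moreover'' clause under the no-sink-disk hypothesis. Your proposal correctly identifies these sources and sketches the standard arguments from them, so in that sense you are aligned with the paper's treatment.

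That said, since you have written out an outline rather than merely citing, one point deserves tightening. In the ``moreover'' part, the mechanism you describe---solving the branch equations for a strictly positive integer weight---is not quite how Li's argument runs, and the no-sink-disk condition does not by itself guarantee a positive solution to the switch equations (there are branched surfaces without sink disks that carry no closed surface at all). Li's construction instead proceeds by repeatedly splitting $B$ along its branch locus; the no-sink-disk condition ensures that this splitting can always be continued without creating a disk of contact or a trivial bubble, and the inverse limit of the splittings yields the desired lamination. The essentiality of the resulting lamination then follows, as you say, from the ``if'' direction. If you revise the sketch to reflect this splitting-tower construction rather than a weight-system argument, the outline will be accurate.
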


 \begin{remark}
 Gaibai and Oertel \cite{GO} firstly defined essential branched surfaces without `the  no sink discs' condition. They further showed that the first part of Theorem  \ref{t.carry} for this kind of essential branched surfaces. Li [Li] added the `the  no sink discs' condition and showed the second part of the theorem.
 \end{remark}

\subsection{Normal form}
In \cite{Ha}, Haken showed that every  incompressible surface in a closed irreducible triangulated
$3$-manifold always can be isotopic to normal surface with respect to the triangulation.
This result is called by Haken lemma nowadays, which is a fundamental result  in the theory of $3$-manifolds.

We say that a lamination $\cL$ is \emph{normal} with respect to a triangulation $\tau$ on the background $3$-manifold,
if each plague in the intersection of  a leaf of $\cL$ with a simplex $\sigma$ of $\tau$ is a normal disk
in the sense of Haken \cite{Ha}, which is either a triangle or a quadrilateral.
In lamination case, generally one can not isotopically push the lamination in a normal position with respect to
a fixing triangulation. The phenomena illustrated in Figure 1.1  of Gabai \cite{Ga2} explains an obstruction
to isotopically push a lamination to a normal position. Nevertheless, in \cite{Bri2}, Brittenham proved an analogue of the Haken lemma for laminations: ``If a closed orientable
$3$-manifold $M$ with triangulation $\tau$ carries an essential lamination $\lambda$, then $M$ carries an essential lamination $\cL$ (possibly different with $\lambda$)  which is normal with respect to $\tau$."
The proof by Brittenham depends on a sequence of normalizing process.
Gabai \cite{Ga2} goes on to describe when and how Brittenham's normalizing process fails to be an isotopy:

\begin{theorem}[Gabai]
Let $\lambda$ be a nowhere dense essential lamination in a closed oreintable $3$-manifold $M$ with triangulation $\tau$. Then $\lambda$ can be transformed into a normal essential lamination $\mu$ by first
deleting the interior of any generalized cylindrical components and then doing one of the following operations:
\begin{enumerate}
  \item isotopy;
  \item splitting open along a finite number of leaves followed by isotopy;
  \item evacuating a taut sutured manifold $(W, \gamma)$.
\end{enumerate}
\end{theorem}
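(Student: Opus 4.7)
The plan is to adapt Brittenham's original normalization algorithm and carefully account for the obstructions to carrying it out by pure isotopy. Recall that Brittenham's procedure proceeds simplex-by-simplex: one inducts over the skeleta of $\tau$, using local straightening moves that push non-normal plaques of $\lambda \cap \sigma$ across the faces of neighboring simplices. For a single embedded incompressible surface these moves can always be realized as ambient isotopies; for a lamination they may fail, and the content of the theorem is that every failure can be localized to one of three model behaviors.

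First I would preprocess $\lambda$ by removing the interiors of all generalized cylindrical components. These are the $I$-bundle pieces of $\lambda$ whose parallel families of leaves carry no new normal-surface information; they are precisely the objects that cause Brittenham's iterative straightening to oscillate indefinitely between a pair of simplices. After this deletion, every remaining leaf carries essential information relative to $\tau$, so at each step of the induction the straightening move either succeeds as an ambient isotopy, or it obstructs itself against a genuine topological feature.

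Next, I would run the cellular induction and classify the obstructions as they appear. If the failing plaques are finitely many and supported on finitely many leaves of $\lambda$, then splitting $\lambda$ open along those leaves in a neighborhood of the offending simplex reduces the remaining problem to a pure isotopy, producing item (2). If instead an accumulation of leaves gets trapped between two consecutive straightening attempts, one isolates a submanifold $W \subset M$ whose horizontal boundary consists of the outermost leaves of the trapped family and whose vertical boundary $\gamma$ records where leaves of $\lambda$ enter and exit $W$. Essentialness and end-incompressibility of $\lambda$ force $\partial_h W$ to be incompressible in $W$ and preclude $S^2$ components, so $(W,\gamma)$ is a taut sutured manifold. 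Evacuating $(W,\gamma)$ — replacing the lamination inside $W$ by a product foliation compatible with $\gamma$ — produces item (3), and after this modification the straightening step that was obstructed becomes an isotopy.

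The main obstacle I expect is twofold. First, one must certify termination: after each use of operation (2) or (3), a suitable complexity — for example the Haken-type weight vector of $\lambda \cap \tau^{(2)}$, refined lexicographically by the Euler characteristic of any branched surface carrying the result — must strictly decrease, and this requires checking that splitting and evacuating never create new non-normality further down the induction. Second, preservation of essentialness across evacuations, particularly end-incompressibility, must be verified, since inserting a product foliation on $W$ introduces new leaves whose ends in $M_{\mu}$ must still satisfy the Gabai--Oertel conditions of Definition \ref{d.eslam}. The remainder of the argument is careful bookkeeping of the simplex-by-simplex straightening, already largely present in Brittenham's paper.
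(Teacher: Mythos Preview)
There is nothing to compare against: the paper does not prove this theorem. It is stated as a quoted result of Gabai, cited to \cite{Ga2}, and the very next sentence reads ``We will never use generalized cylindrical component and taut sutured manifold, so we do not define them here.'' The theorem appears solely as background for Schwider's normalization procedure, and the paper's only substantive remark about it is Schwider's observation that for the particular decomposition of $M(r)$ no evacuations or deletions of generalized cylindrical components occur, so the normalization is always an isotopy (possibly after splitting).

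Your outline is a plausible high-level sketch of what Gabai's argument in \cite{Ga2} involves, but since the paper under review offers no proof, I cannot assess whether your approach agrees with or diverges from it. If you intend to actually supply a proof here, be aware that the paper has not defined the terms you would need (generalized cylindrical component, evacuation of a taut sutured manifold), and your sketch leaves the two points you yourself flag --- termination of the complexity and preservation of end-incompressibility after evacuation --- entirely unaddressed. For the purposes of this paper, the appropriate move is simply to cite \cite{Ga2}, as the author does.
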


We will never use generalized cylindrical component and taut sutured manifold, so we do not define them here. An interested reader can found their definitions in
 \cite{Ga2}.
Nevertheless, we remark that one of the main observations by Schwider is that there are no evacuations or
deleted generalized cylindrical components on the decomposition of $M(r)$ which Schwider used, i.e. each of the essential laminations in $M(r)$ can be isotopically pushed to a normal position.
But the decomposition which he used is, instead of a triangulation of the background manifold $M(r)$, the union of the solid torus $V$ and a spine decomposition of
the figure-eight knot complement $N$ which firstly introduced by Thurston \cite{Thu}. Then he defined the normal position of an essential lamination
with respect to this decomposition. Naturally, the normal position should be defined with respect to both of $V$
and  a spine decomposition of $N$. In this subsection, we will introduce the first part and the second part will be introduced in Section \ref {ss.sch}.

In \cite{Bri1}, Brittenham carefully discussed a kind of good positions for the intersection of  an essential lamination and a solid torus in the background $3$-manifold. The main related result is Theorem 3.1 of \cite{Bri1}. To avoid to introduce too many new conceptions, we only introduce a simplified version of Brittenham's theorem by Schwider (Theorem II.7 of \cite{Sch}), which works  for the essential laminations on $M(r)$. Note that this is the first type of normal form
to push an essential lamination on $M(r)$ to a good position.

Let $\lambda$ be  an essential lamination on some $M(r)$ so that $\lambda$ is transverse to $\partial V$.
Set $\lambda_v = \lambda \cap V$ and $\partial \lambda_v = \lambda_v \cap T$ (recall that $T=\partial V$) which is a $1$-lamination
on $T$.

\begin{theorem}\label{t.norsoltor}
If there does not exist a circle $c$ in $\partial \lambda_v$ so that $c$ satisfies that
 \begin{enumerate}
   \item either $c$ bounds a disk plague of $\lambda$ outside  of $V$ (equivalently, in $N$);
   \item or $c$ bounds a disk plague of $\lambda$ inside  of $V$ and also bounds a disk in $T$,
 \end{enumerate}
then $\lambda_v$ is either a collection of meridian discs, or there is a standard Seifert-fibering of $V$ so that
(after isotopy) $\lambda_v$ contains a vertical sub-lamination $\lambda_v^0$ whose leaves are annuli, and each leaf of $\lambda_v -\lambda_v^0$ are non-compact, simply connected, and horizontal.
\end{theorem}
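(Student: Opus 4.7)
The plan is to analyze the one-dimensional lamination $\partial\lambda_v$ on the torus $T$ and then propagate information about its closed leaves into $V$, using essentiality of $\lambda$ together with the two excluded disk-bounding configurations to rule out trivial behavior. First I would examine the closed leaves of $\partial\lambda_v$ by slope. If a closed leaf $c$ is null-homotopic on $T$, it bounds a disk $D\subset T$; the leaf $L$ of $\lambda$ through $c$ meets $V$ in a plaque $P_v$ with boundary containing $c$, and meets $N$ in a plaque $P_n$ also with $c$ in its boundary. Incompressibility of the leaves of $\lambda$ forces $c$ to bound a disk in $L$, so $c$ bounds a disk plaque either inside $V$ or outside $V$. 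In the first case $c$ bounds disks in both $V$ and $T$, which is the excluded configuration (2); in the second $c$ bounds a disk plaque in $N$, which is excluded configuration (1). Therefore every closed leaf of $\partial\lambda_v$ is essential on $T$.

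Next I would show that all such closed leaves share a common slope: two closed leaves of $\partial\lambda_v$ are disjoint essential simple closed curves in the torus $T$, and such curves must be parallel, hence of the same slope. Call this common slope $s$.

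The argument then splits into two cases according to whether $s$ is the meridian slope of $V$. If it is, each essential closed leaf of $\partial\lambda_v$ bounds a disk in $V$, which by incompressibility of the containing leaf must be realized as a meridian disk plaque of $\lambda_v$. A standard argument then shows that every plaque of $\lambda_v$ is a meridian disk, giving the first alternative in the conclusion. If $s$ is not the meridian slope, I fix the standard Seifert fibration of $V$ whose regular fibers on $T$ have slope $s$; the closed leaves of $\partial\lambda_v$ are then fibers, and extending them into $V$ along the fibration produces annular (or core circle) leaves making up a sub-lamination $\lambda_v^0\subset\lambda_v$, vertical with respect to the fibration. Every non-closed leaf of $\partial\lambda_v$ lies in the one-dimensional lamination on $T$ and must accumulate on closed leaves of slope $s$; its extension into $V$ therefore spirals on $\lambda_v^0$. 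Using the no-excluded-circle hypothesis together with a cut-and-paste argument against the vertical annuli, one verifies that each such leaf of $\lambda_v$ is horizontal with respect to the fibering, non-compact, and simply connected.

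The main obstacle will be this last step in each case: showing that non-closed plaque behavior is either excluded or takes the advertised horizontal/vertical form. This requires a careful analysis of how the plaques of $\lambda_v$ can fit together between the vertical annular leaves, and is exactly what Brittenham carries out in the proof of Theorem 3.1 of \cite{Bri1} using normal-surface techniques inside $V$ relative to a fixed Seifert fibration. I would essentially follow his framework, using hypotheses (1) and (2) precisely to rule out any configuration that would produce a trivial circle of one of the excluded types in $\partial\lambda_v$.
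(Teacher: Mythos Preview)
The paper does not give its own proof of this theorem; it is quoted as a simplified form of Brittenham's Theorem 3.1 in \cite{Bri1} (recorded by Schwider as Theorem II.7 of \cite{Sch}) and used as a black box. Your proposal is a reasonable outline of the Brittenham argument, and you explicitly say as much in your final paragraph, so there is no divergence in approach to discuss.

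One small point worth tightening: in your opening step you pass directly from ``$c$ bounds a disk in its leaf $L$'' to ``$c$ bounds a disk plaque either inside $V$ or outside $V$,'' but the disk in $L$ bounded by $c$ could a priori cross $T$ many times. You need an innermost-circle argument on $T$ to produce a genuinely trivial circle of type (1) or (2), which is indeed how the actual proof proceeds. Similarly, in the meridian case, the claim that every plaque of $\lambda_v$ is a meridian disk requires more than incompressibility of leaves: one needs end-incompressibility (and an isotopy/normalization argument in $V$) to exclude non-disk plaques, and this is part of what the Brittenham machinery supplies.
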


\begin{remark}
Note that as explained in \cite{Bri1}, up to isotopy, an essential lamination always  satisfies the conditions of Theorem \ref{t.norsoltor}. This means that an essential lamination $\lambda$ on $M(r)$ always can be isotopically pushed to the position so that it meets $V$ in only meridian discs, or in annuli and simply connected leaves.
\end{remark}

\subsection{Spine decomposition of the figure-eight knot complement $N$}\label{ss.spine}

We give  a hexagonal decomposition of a torus $T^2$ with four hexagons as Figure \ref{f.cellT2} shows.
\begin{figure}[htp]
\begin{center}
  \includegraphics[totalheight=3.2cm]{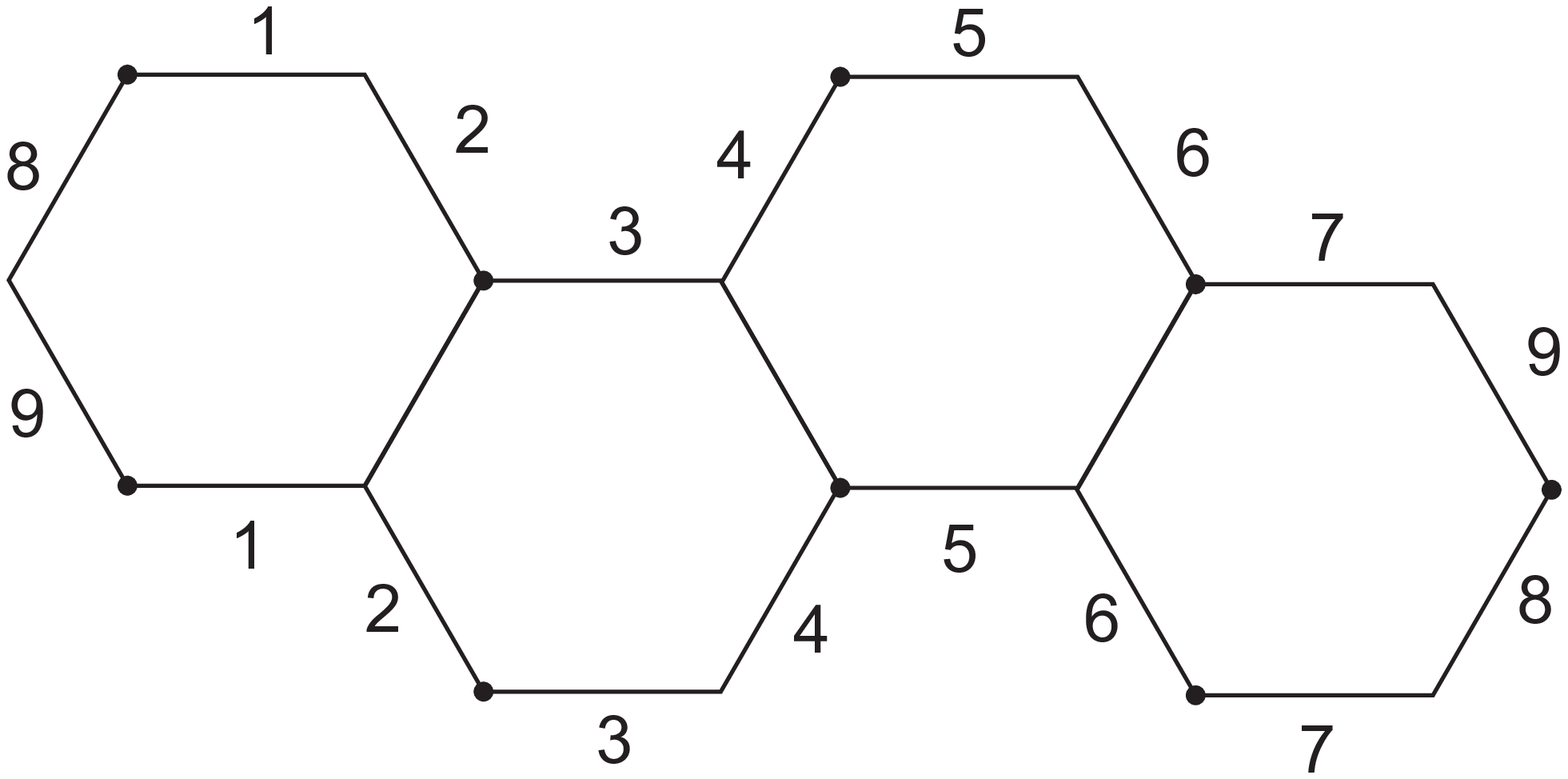}\\

  \caption{a hexagonal decomposition of a torus $T^2$}\label{f.cellT2}
\end{center}
\end{figure}

The decomposition of $T^2$ naturally induces a cell decomposition on $T^2 \times [0,1]$
with four $3$-cells  so that each cell is  associated to the product of a hexagon and an interval.
Build a degree two reflection $\tau: T^2 \times \{0\} \to T^2 \times \{0\}$
as Figure \ref{f.psref} shows. The quotient space  $\sigma = T^2 \times \{0\}/ (x,0)\sim (\tau(x),0)$ is a $2$-complex
with the set of $2$-cells $\sigma^2=\{X, Y\}$, the set of $0$-cells $\sigma^0=\{P_1, P_2\}$ and the set of $1$-cells $\sigma^1=\{a, b, c, d\}$.
\begin{figure}[htp]
\begin{center}
  \includegraphics[totalheight=4cm]{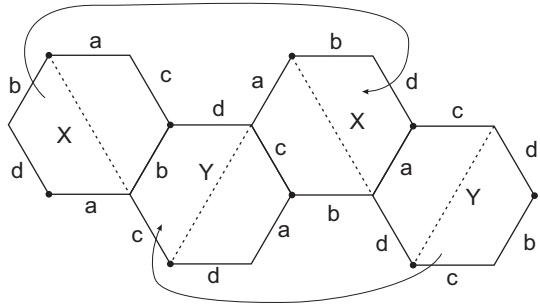}\\

  \caption{the reflection $\tau$ and the spine $\sigma$}\label{f.psref}
\end{center}
\end{figure}

In \cite{Thu}, Thurston showed that up to homeomorphism, the figure-eight knot complement $N$ can be parameterized by $T^2 \times [0,1] / (x,0)\sim (\tau(x),0)$ with $T=\partial N=T^2 \times\{1\}$.  This decomposition of $N$ is called by a \emph{spine decomposition}  of $N$, and
$\sigma$ is called by  a \emph{spine} of $N$. Roughly speaking, the spine decomposition is a singular twisted I-bundle over $\sigma$.
One can find more related information about the spine decomposition in \cite{Thu} or \cite{Sch}.

\subsection{The main strategy of Schwider's classification}\label{ss.sch}
In Thurston's classification of incompressible surfaces in $M(r)$ \cite{Thu},
he put the surface in a good position with respect to the spine $\sigma$.
To do this, first he assumes that the surface meets the $1$-skeleton of $\sigma$ transversely and minimally,
and then he isotopes the surface into normal form with respect to $\sigma$.

There are two difficulties to generalize the idea to laminations:
\begin{enumerate}
  \item there always are infinitely many leaves in a lamination so that the minimal intersection makes no sense now;
  \item the non-compactness of the leaves also makes it difficult to put a lamination in normal form.
\end{enumerate}

Now we introduce Schwider's normalization of an essential lamination $\lambda$ on $M(r)$ with respect to $N$.
The first step of  normalization  is to isotopically move $\lambda$ so that $\lambda$ is transverse to the spine $\sigma$. Here we say that $\lambda$ is \emph{transverse} to $\sigma$ if every leaf of $\lambda$ is transverse to $\sigma$ and is disjoint with the two vertices
of $\sigma$.
This always can be done since we can think that $\lambda$ is fully carried by a branched surface $B$, and then we can isotopically move $B$ so that $B$ and $\sigma$ are transverse and $B\cap \sigma$ is disjoint with the two vertices of $B$ due to the general intersection theory.  Let us be more precise.

\begin{definition}\label{d.goodp}
We say that an essential lamination $\lambda$ on $M(r)$ is in \emph{good position} with respect to $N$ if
\begin{enumerate}
  \item $\lambda$ is transverse to $\sigma$;
  \item $((\lambda\cap N)-\sigma)$ is homeomorphic to $(\lambda \cap T) \times (0,1]$, where $\lambda \cap T$ double covers $\lambda \cap \sigma$. Here recall that $T= \partial N = T^2 \times \{1\}$.
\end{enumerate}
\end{definition}

The second step of normalization will never alter the good position of $\lambda$ since each isotopy will only take leaves through $\sigma$. For the second step of normalization, Brittenham introduce a weight pair $(\cW, w)$ on each
lamination $\lambda$.

$\cW$ is defined as follows.
$\lambda^c$ is defined to be the path closure of $\lambda -\sigma$. We say that two components in $\lambda^c$ are equivalent under $\sim$ if they are isotopic
through leaves transverse to $\sigma^1$ and not meeting $\sigma^0$.
Let
$\cC= \{c\in \lambda^c: \mbox{c is compact}\}/ \sim$.
It is not difficult to show that $\cC$ is a finite set (see the proof of Lemma II.8 in \cite{Sch}).
Then we can define a weight $\cW=\sum_{[c]\in \cC} |c\cap \sigma^1|$ of $\lambda$, where
 $|c\cap\sigma^1|$ is the number of the points in $c\cap \sigma^1$.

In \cite{Ga2}, Gabai defined another weight $w$ of $\lambda$ by minimizing $|B\cap \sigma^1|$ for the branched surface set $\{B\}$ so that each $B$ is transverse to $\sigma$ and disjoint with $\sigma^0$, and carries $\lambda$. The weight pair $(\cW, w)$ is
defined by the lexigraphical order.

A \emph{short connector} is a normal arc connecting two adjacent sides of a $2$-cell of $\sigma$.
Here a normal arc is a component of the intersection of  a leaf of $\lambda$ and a hexagon of $\sigma$, which is disjoint with the vertices of $\sigma$. Schwider call a normal arc in $\sigma$ a \emph{connector}.
Two short connectors about the same vertex in $\sigma$ are \emph{adjacent}, if they meet along an edge in $\sigma$. Schwider \cite{Sch} shows that  up to isotopy, he can get rid of adjacent short connectors and
does not encrease $(\cW, w)$. From now on, we assume that $\lambda$ is in the position so that $\lambda$ minimizes $(\cW, w)$ and does not contain any short connector.

Every hexagon of $\sigma$ contains six sides. Note that two sides of a hexagon maybe associated to the same edge of $\sigma$. Under the assumptions about the position of $\lambda$,  there does not exist any connector whose two ends are in the same side of a hexagon. Therefore, there are the following three types of connectors: \emph{short connectors},
\emph{medium connectors} and \emph{long connectors}. Here, a short connector joins two adjacent sides, a medium connector
skips over one side and a long connector joins two opposite sides in the hexagon.
 Notice that $B\cap T$ double covers $B\cap \sigma$, and there does not exist any monogon in $B\cap T$
since $B$ is essential.

Depending on these restrictions, to get the essential branched surfaces list which carries all
essential laminations on $\{M(r)\}$, Schwider first classifies all of the branched $1$-manifolds $\{Q=B\cap \sigma\}$. Let $w_a$, $w_b$, $w_c$ and $w_d$ be the number of the intersection points of $Q$ (or $B$) and the respective edges. Up to symmetries of $\sigma$, there are the following four types:
\begin{enumerate}
  \item $w_c =w_d =0$;
  \item $w_a=w_c=0$;
  \item $w_c=0$;
  \item each of $w_a$, $w_b$, $w_c$ and $w_d$ is positive, which is called \emph{All Positive} by Schwider.
\end{enumerate}

Schwider classified  $11$ types of $Q$ (Chapter III in \cite{Sch}) up to symmetry by discussing the four types above.
Then he carefully constructed $B$ from each $Q$ partially depending on  Theorem \ref{t.norsoltor}
and the fact that $B_N = B \cap N$ is  a twisted $I$-bundle over $Q$.
 By consider all possible $Q$,
 he got $39$ types of essential branched surfaces (Chapter IV in \cite{Sch}) so that
 each essential lamination can be fully carried by one of them.

In the next section, we will introduce Schwider's list and the main ideas about his proof.

\section{Schwider's branched surfaces}\label{s.Schwiderlist}
To understand the list of Schwider's branched surfaces, we have to introduce more notations which were introduced by Schwider in \cite{Sch}. Recall that the spine $\sigma$ is the union of two hexagons $X$ and $Y$ with two vertices and four edges. See Figure \ref{f.labhgon}, we label the sides and vertices of
$X$ and $Y$. Here $x_1, x_2$ and $x_3$ ($x\in\{a,b,c,d\}$) are associated to the same edge $x$ of $\sigma$, the vertices with black dots are associated to the vertex $P_1$ and the other vertices are associated to $P_2$.

\begin{figure}[htp]
\begin{center}
  \includegraphics[totalheight=3.8cm]{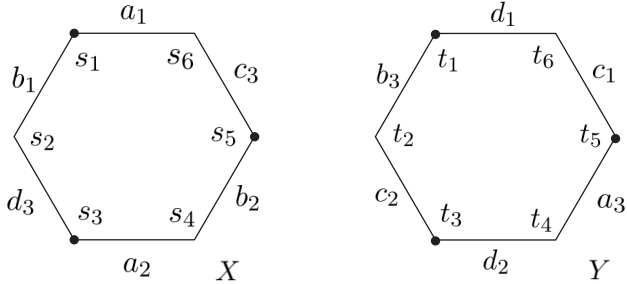}\\

  \caption{labeling sides and vertices of $X$ and $Y$}\label{f.labhgon}
\end{center}
\end{figure}

Moreover, when we discuss $B\cap \sigma$, we also use $s_i$ and $t_j$ ($i,j\in\{1,\dots,6\}$) to represent the corresponding short connectors. We can label medium connectors and long connectors as Figure \ref{f.labcon} shows. For every special connector type $u$, we denote by $|u|$ the number of connectors of type $u$.

\begin{figure}[htp]
\begin{center}
  \includegraphics[totalheight=3.8cm]{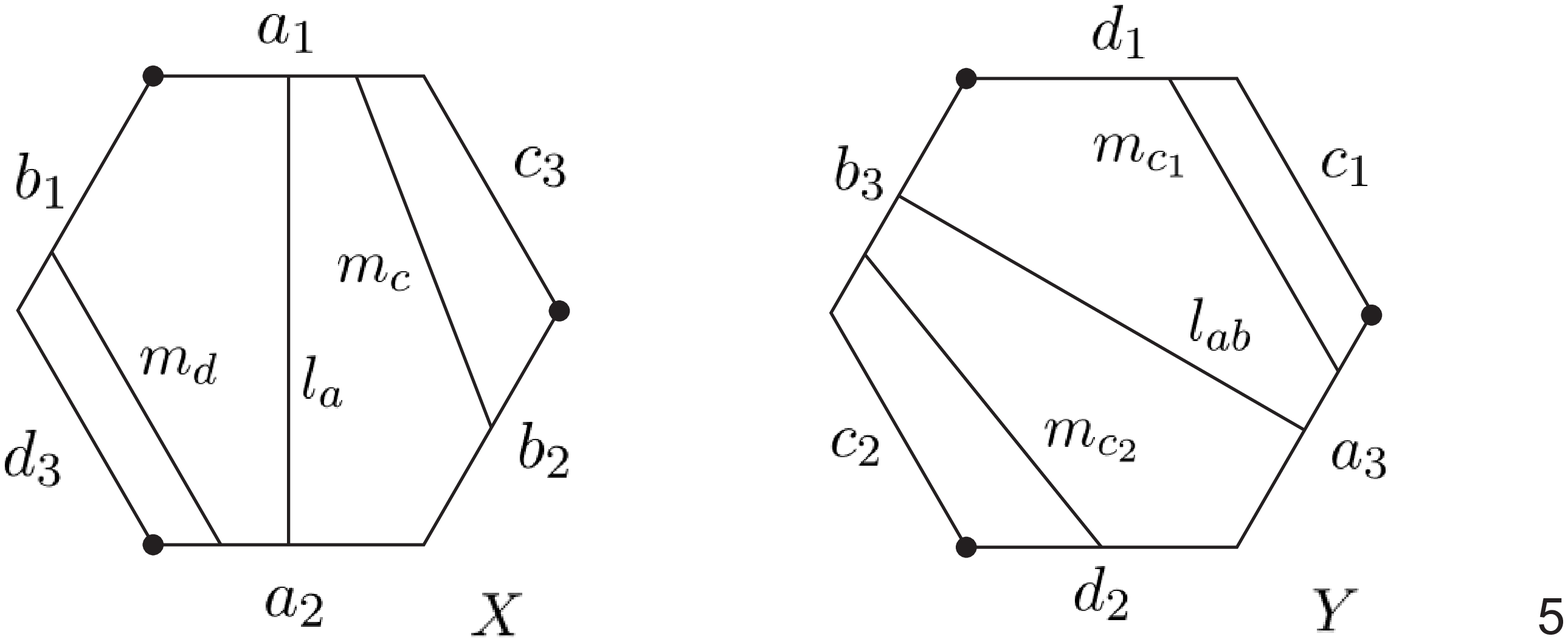}\\

  \caption{labeling some connectors}\label{f.labcon}
\end{center}
\end{figure}

 Schwider's strategy  contains the following steps more or less. Firstly, for each type in the last part of the last section, he can classify the corresponding $\{Q\}$. Notice that he proved that the first three classes were enough to provide all of the branched surfaces in question.
Secondly,  by using double cover of $Q=B\cap \sigma$, he can describe $B \cap T$.
 Finally, by using Theorem \ref{t.norsoltor} and some other techniques, he further can understand the plaques in $B^v$, and the gluing between $B^n$ and $B^v$. Here and below, $B^n$ and $B^v$ are denoted by  $B\cap N$ and $B\cap V$ respectively.

\subsection{$w_c =w_d =0$}
In this case, by some careful analysis, Schwider got that
$Q=B\cap \sigma$ has five
 possibilities: $Q_1$, $Q_2$, $Q_3$, $Q_4$ and $Q_5$, which are shown in  Figure \ref{f.Q1-5}.
 One can find more related information in Section 3.2 of \cite{Sch}.
 \begin{figure}[htp]
\begin{center}
  \includegraphics[totalheight=6cm]{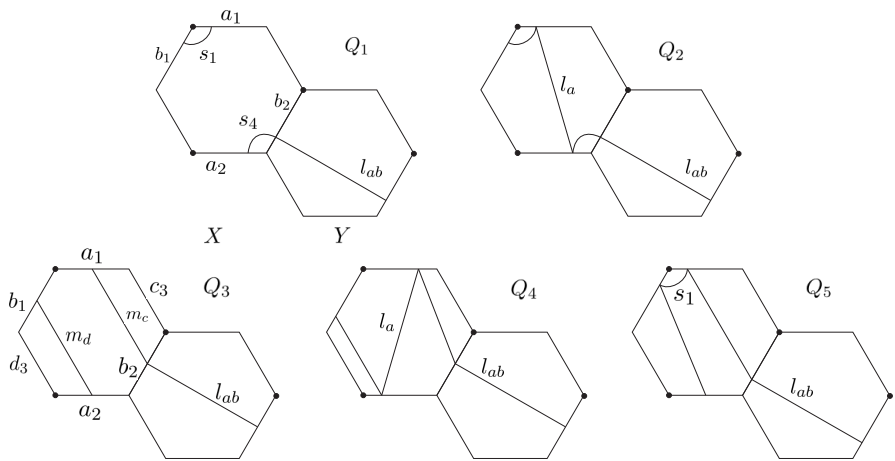}\\
 \caption{$Q_1$, $Q_2$, $Q_3$, $Q_4$ and $Q_5$}\label{f.Q1-5}
\end{center}
\end{figure}

 Note that in the case of $Q_1$ and $Q_2$, $|s_1|, |s_4|>0$, and in the case of $Q_3$, $Q_4$ and $Q_5$, $|m_c|, |m_d|>0$.
 We will describe the  set of  branched surfaces $\{B\}$ associated to $Q_1, \dots, Q_5$ one by one.

 \subsubsection{$Q_1$}\label{sss.Q1}
 $B_1^n$ is a fiber punctured  torus of $N$.
See Figure \ref{f.B1NcapT} for $\partial B_1^n=B_1^n \cap T$.
 \begin{figure}[htp]
\begin{center}
  \includegraphics[totalheight=3.2cm]{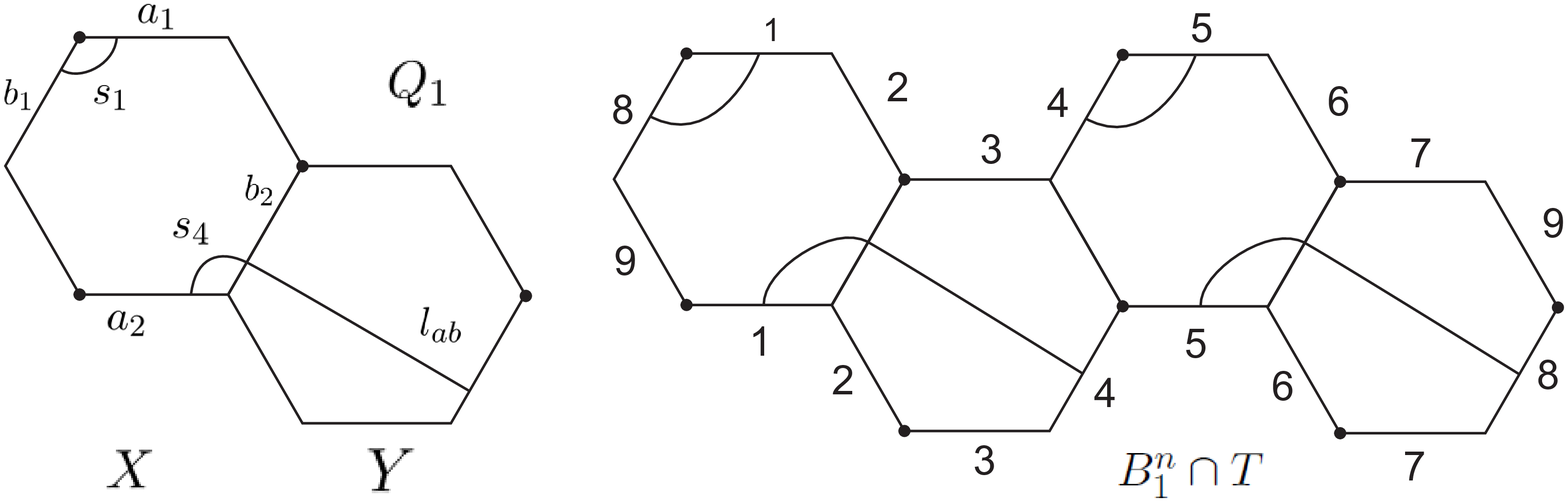}\\
  \caption{$B_1^n \cap T$}\label{f.B1NcapT}
\end{center}
\end{figure}

Schwider further showed that there is no annulus plagues in $V$ in this case. Therefore, by  Theorem \ref{t.norsoltor}, there is a unique branched surface $B_1$ associated to $B_1^n$ which in fact is homeomorphic to a fiber torus on the  sol-manifold $M(0)$. Then the set of the corresponding background manifolds  is $\{M(0)\}$.

\subsubsection{$Q_2$}
See Figure \ref{f.B2NcapT} for $\partial B_2^n=B_2^n \cap T$.
$\omega$, $\mu$ and $\nu$  code the weights of a loop\footnote{Here and below, a loop means a simple closed curve.} carried by $\partial B_2^n$ on the corresponding edges.
\begin{figure}[htp]
\begin{center}
  \includegraphics[totalheight=3.2cm]{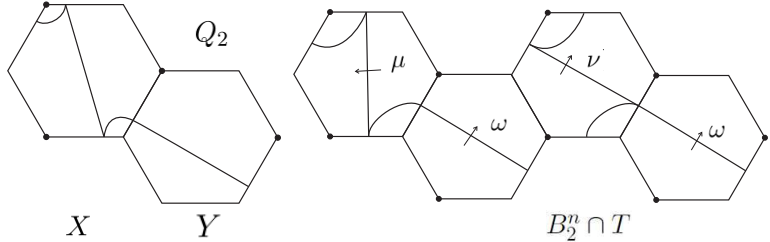}\\
  \caption{$B_2^n \cap T$}\label{f.B2NcapT}
\end{center}
\end{figure}

Any loop $c$ carried by $\partial B_2^n$ must
satisfies the switch equation induced by $\partial B_2^n$.
Then
$c$ has slope
$\frac{\mu-\nu}{\omega}$ ($\omega >0, \omega > \nu$). So $\lambda_N$ can contain loops of any slope.
For  a fixing slope, Schwider further showed that there is no annulus plagues in $V$ in this case. Therefore, by  Theorem \ref{t.norsoltor}, there is  a unique branched surface $B_2$ by gluing $B_2^n$ and a disk
along the loop on $T$ with slope $r$.
See Figure \ref{f.locatt1} for a local picture for the gluing between $B_2^n$ and $B_2^v$. Schwider proved that $B_2$ is an essential
branched surface in $M(r)$.  Since a loop $c$ carried by $Q_2$ can be any slope, then the set of the corresponding background manifolds is  $\{M(r)\mid r\in\QQ\}$.
 \begin{figure}[htp]
\begin{center}
  \includegraphics[totalheight=4.5cm]{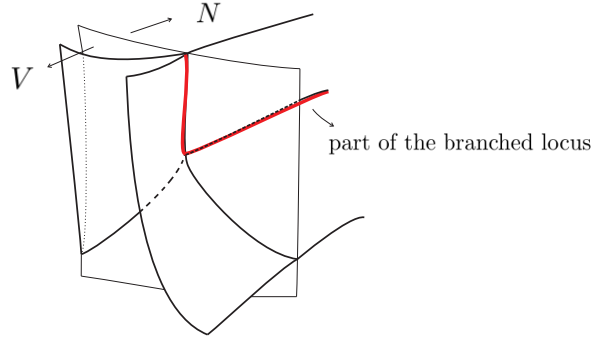}\\
  \caption{local picture of the gluing between $B_2^n$ and $B_2^v$}\label{f.locatt1}
\end{center}
\end{figure}

\subsubsection{$Q_3$}\label{sss.Q3}
 See Figure \ref{f.B3NcapT} for $\partial B_3^n=B_3^n \cap T$. $B_3^n$ is homeomorphic to a once-punctured Klein
 bottle, and the loops carried by $\partial B_3^n$
  Similar to the cases of $Q_1$ and $Q_2$, there is no annulus plagues in $V$ and there is a unique branched surface $B_3$  in this case, which in fact is a Klein bottle.
  Moreover, the set of the corresponding background manifolds  is $\{M(4)\}$.
  \begin{figure}[htp]
\begin{center}
  \includegraphics[totalheight=3.2cm]{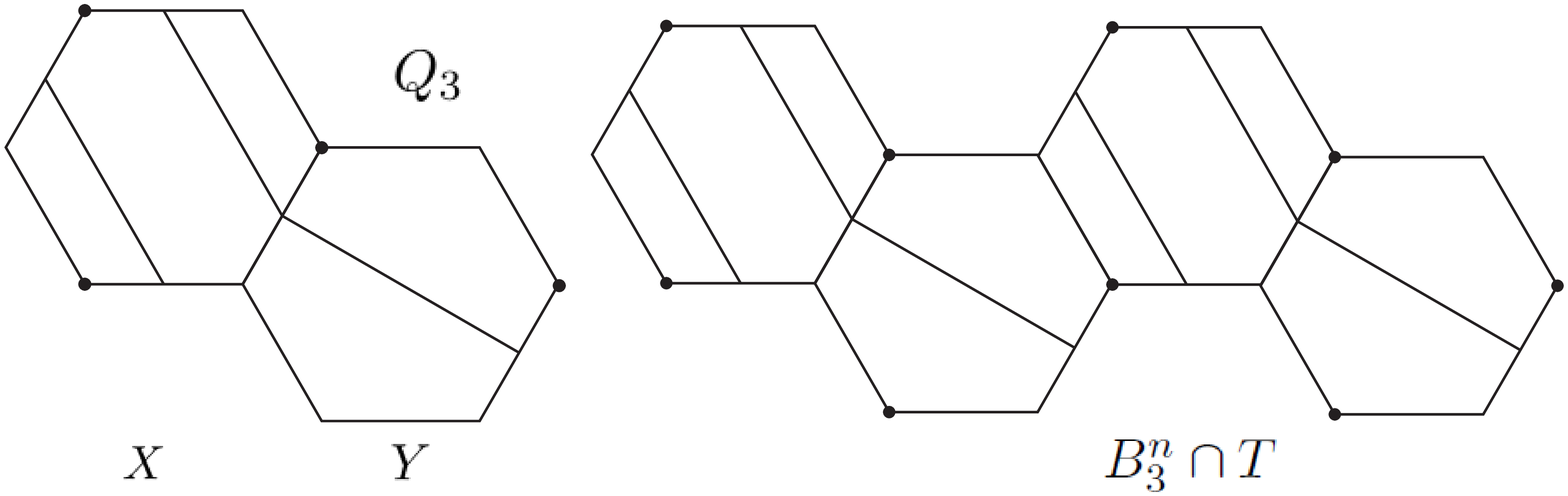}\\
  \caption{$B_3^n \cap T$}\label{f.B3NcapT}
\end{center}
\end{figure}

\subsubsection{$Q_4$}
  See Figure \ref{f.B4NcapT} for $\partial B_4^n=B_4^n \cap T$. $\omega$, $\mu$ and $\nu$  code the weights of a loop on the corresponding edges.
  Any loop carried by $B_4^n \cap T$ must has slope
$r=3+\frac{\mu + \nu}{\omega}$ ($\omega >0$, $\mu> 0$, $\omega >\nu> 0$). Similarly, Schwider showed that there is no annulus plagues in $V$ in this case. Therefore, by  Theorem \ref{t.norsoltor}, there is  a unique branched surface $B_4$ by gluing $B_4^n$ and a disk
along the loop carried by $\partial B_4^n$ on $T$ with slope $r=3+\frac{\mu + \nu}{\omega}$. In this case, the set of the corresponding background manifolds  is  $\{M(r)\mid r>3\}$.

  \begin{figure}[htp]
\begin{center}
  \includegraphics[totalheight=3.2cm]{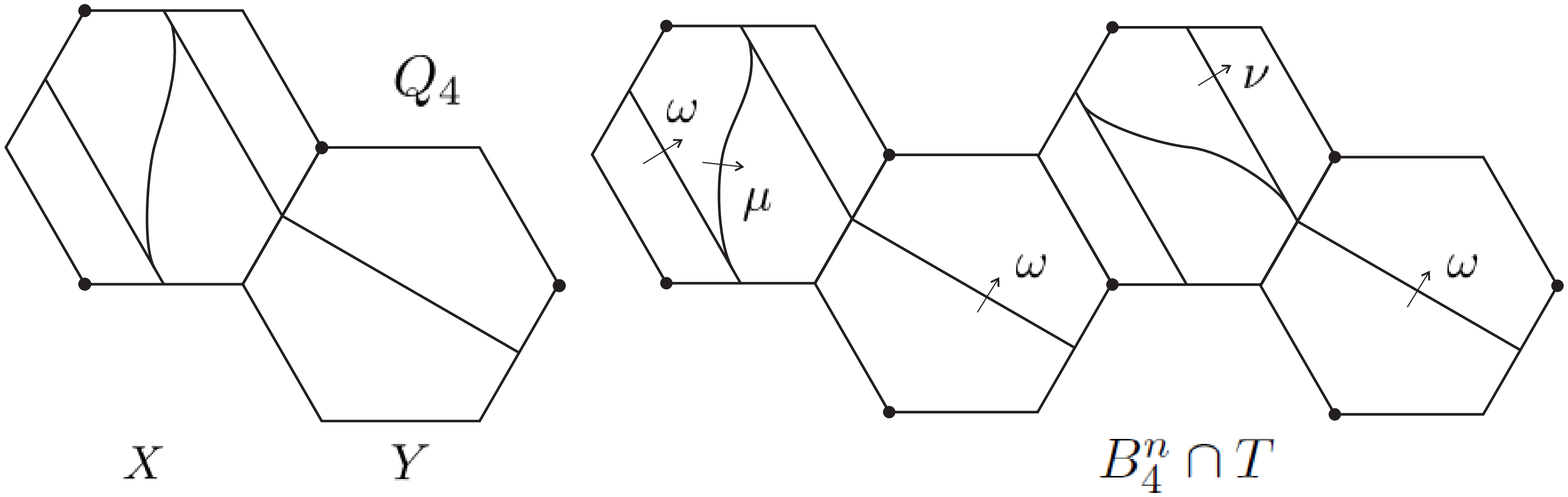}\\
  \caption{$B_4^n \cap T$}\label{f.B4NcapT}
\end{center}
\end{figure}

\subsubsection{$Q_5$}
See Figure \ref{f.B5NcapT} for $\partial B_5^n = B_5^n\cap T$. $h$, $i$ and $g$ code the corresponding edges, and
$\mu$ and $\nu$  code the weights of a loop carried by $\partial B_5^n$ on $h$ and $i$ respectively.
  \begin{figure}[htp]
\begin{center}
  \includegraphics[totalheight=3.2cm]{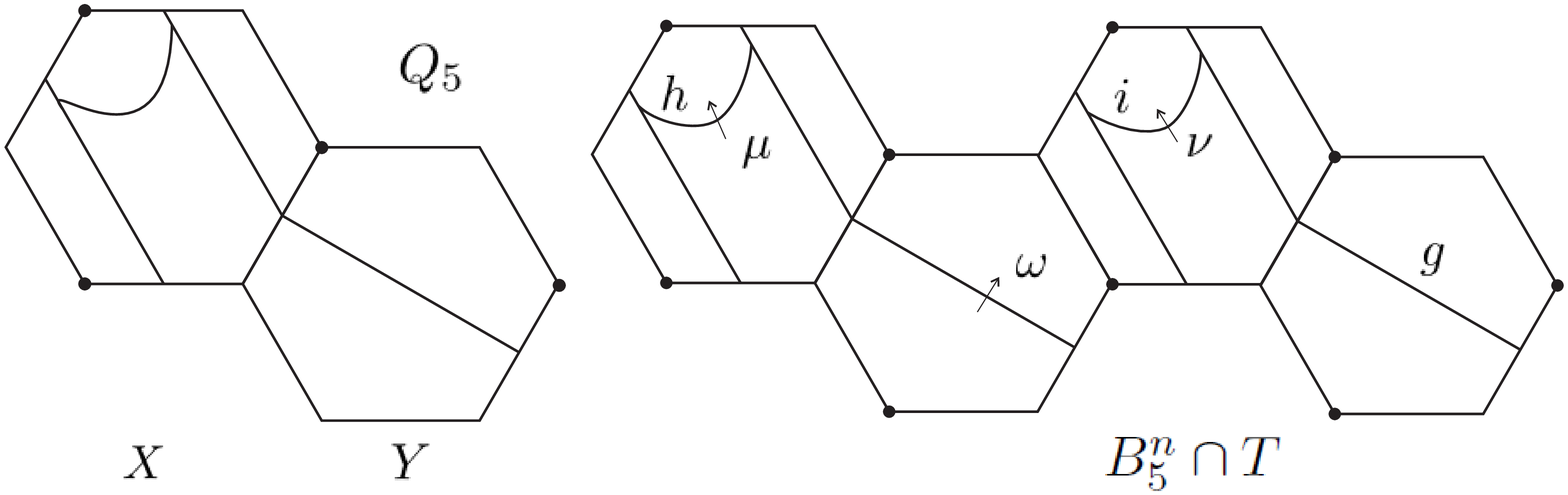}\\
  \caption{$B_5^n \cap T$}\label{f.B5NcapT}
\end{center}
\end{figure}

 Since the corresponding switch equation will induce that $\mu=\nu=0$, $\partial B_5^n$ only carries loops of slope $4$, and it must carries non-compact curves by $h$ and $i$.
 By Theorem \ref{t.norsoltor}, $\lambda_N$ does not contain  meridian discs, but contains
 annular leaves. To get $B_5$
 from $B_5^n$, we need two steps of gluing sectors.
 \begin{enumerate}
   \item Firstly, we get $B_5'$ by attaching an annulus $A$ in $V$ so that both of the two ending loops are glued to $g$. The union of $A$ and $T$ cuts $V$ to two components: the \emph{occupied} component and the \emph{vacant} component. Here a component is occupied (vacant, resp.) if it contains some (does not contain any, resp.) horizontal leaves.
   \item Then we get $B_5$ by attaching two disc sectors whose interiors are in the occupied component so that each disc sector has one are glued to either $h$ or $i$ and
       the complementary arc meets $A$ in a branch curve (see Figure \ref{f.attdisk} as an illustration). Note that when the manifold $M(r)$ is fixed, there is a unique way
       that these disc sectors meet $A$ in a branch. Therefore, $B_5$ is unique when $M(r)$ is fixed.
 \end{enumerate}
Further notice that to avoid an end-compression, the slope-$r$ loop of
 the background manifold $M(r)$ should intersects the slope-$4$ loop at least twice.
 This means that
 the set of the corresponding background manifolds  is
$\{M(r)=M(\frac{q}{p})\mid |4p-q|\geq 2\}$.
  \begin{figure}[htp]
\begin{center}
  \includegraphics[totalheight=4.5cm]{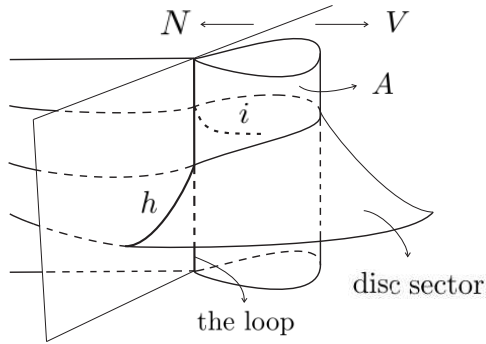}\\
  \caption{attaching a disc sector}\label{f.attdisk}
\end{center}
\end{figure}

\subsection{$w_a =w_c =0$}
In this case, $Q$ has two types: $Q_6$ and $Q_7$, see the left parts of Figure \ref{f.B6NcapT} and Figure \ref{f.B7NcapT}. More details can be found in Section 3.3 of \cite{Sch}.

\subsubsection{$Q_6$}\label{ss.Q6}
See Figure \ref{f.B6NcapT} for $\partial B_6^n = B_6^n\cap T$. It is easy to know from the related switch equation that $\partial B_6^n$ can only carry loops with
slope $\infty$.  Moreover, these loops only can be carried by $g$ or $h$, and all other arcs in
$\partial B_6^n$ only can carry  non-compact curves. By Theorem \ref{t.norsoltor},
$\lambda_v$ does not contain meridian discs. Here $\lambda_v =\lambda \cap V$ where $\lambda$ is an essential lamination on $M(r)$ so that $\lambda \cap N$ is fully carried by $B_6^n$.
  Essentially due to the facts that there is no monogon
disk for $B$ and the number of  fibers in $V$ is no more than $1$,
Schwider shows that $\lambda_v$ contains some vertical annuli so that each of them satisfies that one boundary component is in
$N(g)$ and the other boundary component is in $N(h)$. This fact can ensure us to get the first branched surface $B_6$ associated to $Q_6$, which is obtained by two steps:
\begin{enumerate}
  \item to add one annular sector with two boundary components are in $N(g)$ and $N(h)$ respectively;
  \item to add several disks through the other arcs of $\partial B_6^n$.\footnote{When $M(r)$ is fixed, the way of attaching disk sectors is unique. Therefore,
      for simplicity, below when we introduce a new branched surface, sometimes we will omit talking about the attaching disks surgery.}
\end{enumerate}
\begin{figure}[htp]
\begin{center}
  \includegraphics[totalheight=3.2cm]{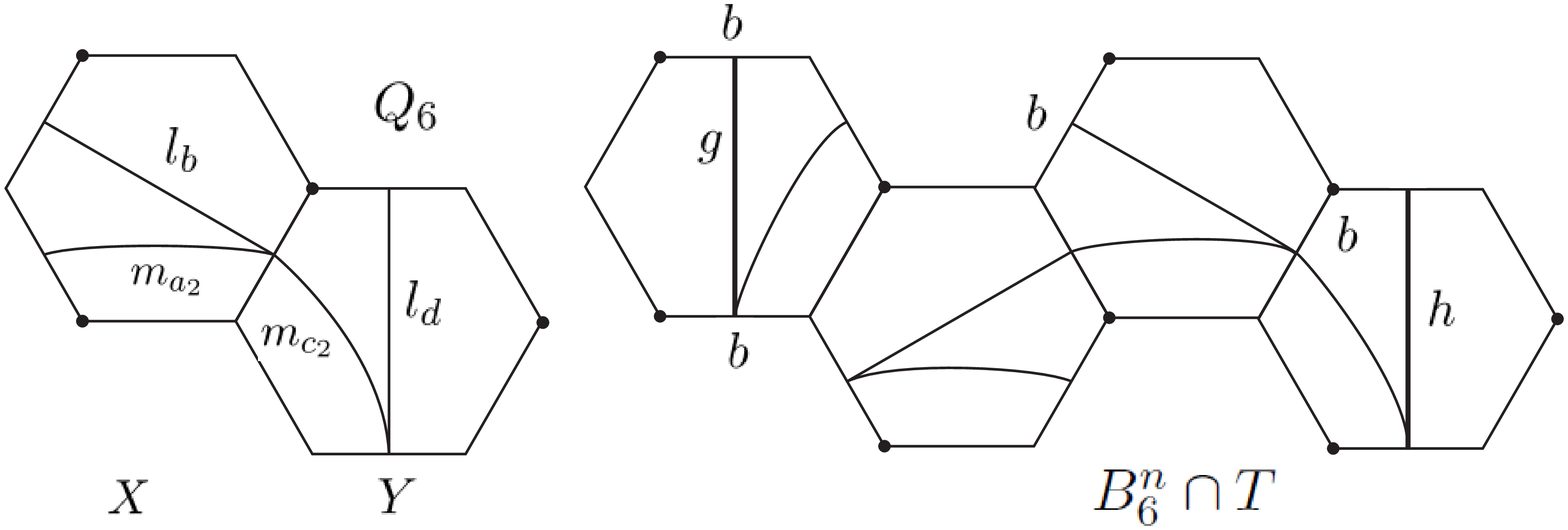}\\
 \caption{$B_6^n\cap T$}\label{f.B6NcapT}
\end{center}
\end{figure}
Notice that the complement of $B_6$ in $M(r)$ is  a solid torus with two slope-$\infty$ cusps,
therefore, in this case $M(r)$ satisfies that $r$ can be any rational number.

There are more essential branched surfaces associated to $Q_6$. To introduce them, we start by defining two types of annular sectors:
\begin{enumerate}
  \item \emph{type I sector}: an annular sector in $B\cap V$ with both ends meeting the same closed branch curve;
  \item \emph{type II sector}: an annular sector in $B\cap V$ with two ends meeting two different closed branch curves.
\end{enumerate}
Every other branched surface associated to $Q_6$ can be obtained by adding either a type I sector or a type II sector to $B_6$, which is named by $B_6^{I}$ or $B_6^{II}$ respectively. Note that
there are two types of $B_6^{I}$ which depend on   the two ends of the type I sector meeting $g$ or $h$, and there is a unique branched surface with type $B_6^{II}$, which can be obtained by gluing
a type II sector with two ends meeting $g$ and $h$.\footnote{$B_6^{II}$ abstractly can be obtained by splitting the annular sector of $B_6$ without considering embedding.}

In the case of $B(6)$, the set of the corresponding background manifolds is  $\{M(r)\mid r\in\QQ\}$.
In the case of each branched surface extended from $B_6$, the set of the corresponding background manifolds is  $\{M(r)=M(\frac{q}{p})\mid p>1\}$. Furthermore, the  solid torus is either bounded by two type II sectors,
or bounded by a type I sector. Here we say that a solid torus $V_0 \subset V$ in $M(r)=M(\frac{q}{p})$ ($p>1$) is  \emph{exceptional} if a core $c(V_0)$ of $V_0$ is isotopic to a core $c(V)$ of $V$ in $V$.
We remind a reader that this remark  also works in some similar cases below.

\subsubsection{$Q_7$}\label{ss.Q7}
Now let us turn to consider $Q_7$. See Figure \ref{f.B7NcapT} for $\partial B_7^n = B_7^n\cap T$. Similarly, $\partial B_7^n$ can only carry loops with slope $\infty$. which meets $f$, $g$ or $h$.
Schwider showed that $f$ and $h$ must meet some type II sectors (Lemma IV.21 in \cite{Sch}).
There are $20$ types of essential branched surfaces associated to $Q_7$, which can be divided into three classes.
\begin{figure}[htp]
\begin{center}
  \includegraphics[totalheight=3.2cm]{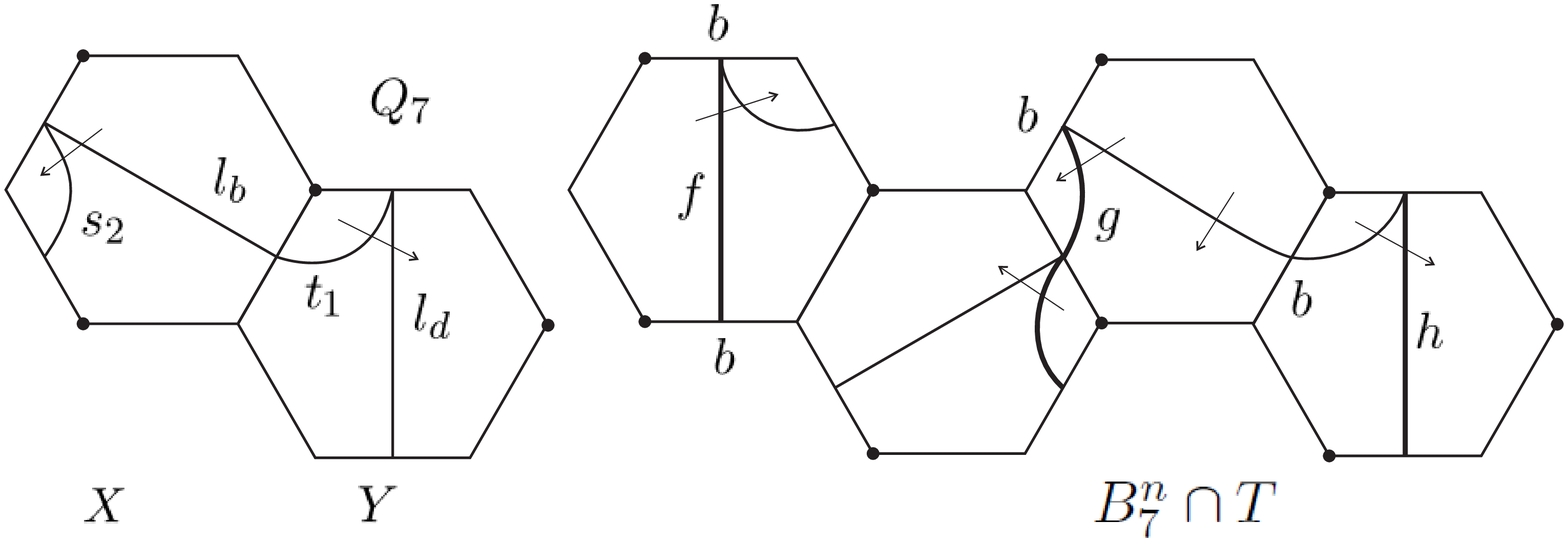}\\
 \caption{$B_7^n\cap T$}\label{f.B7NcapT}
\end{center}
\end{figure}

Each branched surface in the first class is obtained by a basic branched surface $B_7$. $B_7$
can be obtained by
 attaching two annuli sectors to $Q_7$ so that  one of them meets $f$ and $g$, and the other meets $g$ and $h$. There are $13$ essential  branched surfaces generated by $B_7$ which can be divided into the following six types.
\begin{itemize}
  \item The first type is $B_7$ itself.
  \item The second type, namely $B_7^I$, contains three branched surfaces, each one is obtained by attaching a type $I$
sector to $B_7$ meeting one of $f,g$ and $h$.
  \item The third type, namely $B_7^{II}$, contains two branched surfaces, each one is
obtained by attaching a type $II$ sector to $B_7$ meeting $f$ and $g$ or $h$ and $g$.
  \item The fourth type contains a unique branched surface $R_7$, which is obtained by gluing
a type II sector to $B_7$ meeting $f$ and $h$.
  \item The fifth type, namely $R_7^I$, contains three branched surfaces, each one is obtained by attaching a type $I$
sector to $R_7$ meeting one of $f,g$ and $h$.
  \item The sixth type, namely $R_7^{II}$, contains three branched surfaces, each one is obtained by attaching a type II
sector to $R_7$ meeting one of $f,g$, $g,h$ and $h,f$.
\end{itemize}
In the cases of $B_7$ and $R_7$, the set of the corresponding background manifolds is  $\{M(r)\mid r\in\QQ\}$.
In each of the other cases,  the set of the corresponding background manifolds is  $\{M(r)=M(\frac{q}{p})\mid p>1\}$.
Furthermore, the  solid torus is either bounded by two type II sectors,
or bounded by a type I sector.

The second class contains a unique branched surface, namely $B_7^*$, which is obtained by
 attaching a type I sector meeting $g$, and a type II sector meeting $f$ and $h$.

There are $6$ branched surfaces in the third class, for simplicity, each one is
named by $B_7^{**}$.  Each $B_7^{**}$ is obtained by gluing two type II sectors
so that one of them meets $f,h$ and the other one meets $f,g$ or $g,h$, and gluing a type I sector
meeting one of $f,g$ and $h$. Similarly, the set of the corresponding background manifolds is  $\{M(r)=M(\frac{q}{p})\mid p>1\}$. The  solid torus is bounded by the type I sector.

\subsection{$w_c =0$}
In this case, there are four types of $Q$: $Q_8$, $Q_9$, $Q_{10}$ and $Q_{11}$, which are shown in the left parts of Figure \ref{f.B8NcapT},  \ref{f.B9NcapT},  \ref{f.B10NcapT} and  \ref{f.B11NcapT}. More details can be found in Section 3.4 of \cite{Sch}.

\subsubsection{$Q_8$}\label{ss.Q8}
In the case of $Q_8$, $\partial B_8^n = B_8^n \cap T$ only carries loops with slope $\infty$
which only can be carried by $f$, $g$ and $h$, see Figure \ref{f.B8NcapT}.
\begin{figure}[htp]
\begin{center}
  \includegraphics[totalheight=3.7cm]{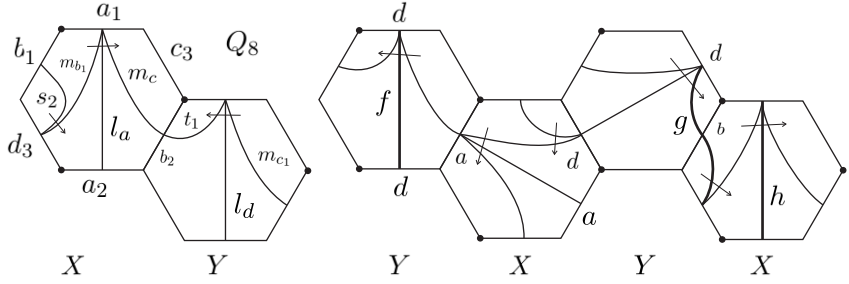}\\
 \caption{$B_8^n\cap T$}\label{f.B8NcapT}
\end{center}
\end{figure}
Schwider showed that a type II sector must meet $g$ (Lemma IV.29 in \cite{Sch}).
The basic branched surface $B_8$ consists of two type II sectors so that one meets $f$ and $g$,
and the other  meets $f$ and $h$.
By abstractly splitting one of the two type II sectors, we have two types of $B_8^{II}$.
There is another branched surface associated to $Q_8$, namely $B_8^{III}$, which is obtained by
 attaching a type II sector to $B_8$ meeting $g$ and $h$,
and a type I sector meeting $g$.

In the case of $B_8$,
the set of the corresponding background  manifolds is $\{M(r)\mid r\in \QQ\}$.
In each of the other cases,  the set of the corresponding background  manifolds is
$\{M(r)=M(\frac{q}{p})\mid p>1\}$.

\subsubsection{$Q_9$}\label{ss.Q9}
In the case of $Q_9$, see Figure \ref{f.B9NcapT} for $\partial B_9^n = B_9^n\cap T$. We use $e$, $f$, $g$, $h$, $i$ and $j$ to code
both of the edges and
the weight of the corresponding edges.
\begin{figure}[htp]
\begin{center}
  \includegraphics[totalheight=3.7cm]{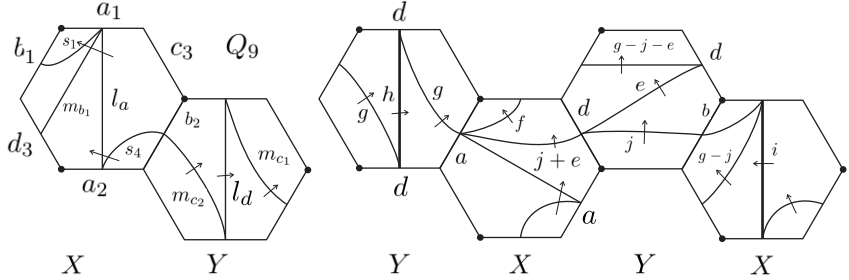}\\
 \caption{$B_9^n\cap T$}\label{f.B9NcapT}
\end{center}
\end{figure}
We divide the discussions to two cases
depending on that if
 the weight of a  loop carried by  $\partial B_9^n$ is $0$  on $g$. If $g=0$,
 by gluing a type II sector meeting $h$ and $i$, we get the first basic branched surface $B_9$.
By abstractly splitting the glued type II sector in $B_9$, we get $B_9^{II}$.

If $g>0$, Schwider showed that $h>0$, $M(r)$ satisfies that $r=\frac{q}{p}$ with slope $\frac{g+h-f-e-i}{g}$, and there must be only some meridian discs in $\lambda_v$.
Here $\lambda_v$ is the intersection of the lamination with the solid torus $V$.
We call by $B_9^M$ this new branched surface.

In each case of $B_9$ and $B_9^M$,
the set of the corresponding background  manifolds is $\{M(r)\mid r\in \QQ\}$.
In the case of $B_9^{II}$,  the set of the corresponding background  manifolds is
$\{M(r)=M(\frac{q}{p})\mid p>1\}$.

\subsubsection{$Q_{10}$}
In the case of $Q_{10}$, see Figure \ref{f.B10NcapT} for $\partial B_{10}^n = B_{10}^n\cap T$.
\begin{figure}[htp]
\begin{center}
  \includegraphics[totalheight=3.7cm]{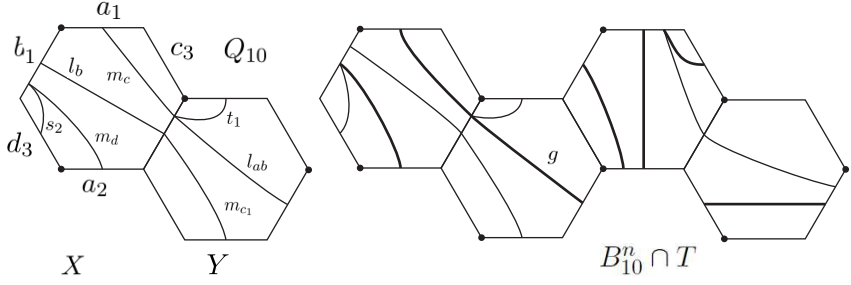}\\
 \caption{$B_{10}^n\cap T$}\label{f.B10NcapT}
\end{center}
\end{figure}
By computing the switch equation, every loop must be carried by $g$.
$B_{10}$ can be obtained by attaching a type I sector to $B_{10}^n$ along
$g$. Here $g$ is a  loop on $T$ with slope $4$.
The set of the corresponding background  manifolds is
$\{M(r)=M(\frac{q}{p})\mid |4p-q|>1\}$.

\subsubsection{$Q_{11}$}
In the case of $Q_{11}$, see Figure \ref{f.B11NcapT} for $\partial B_{11}^n = B_{11}^n\cap T$.
\begin{figure}[htp]
\begin{center}
  \includegraphics[totalheight=3.7cm]{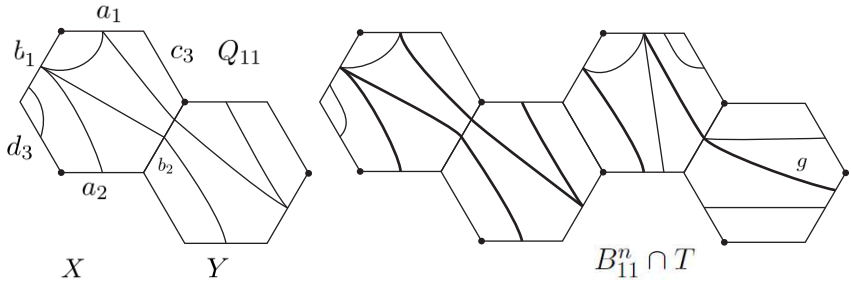}\\
 \caption{$B_{11}^n\cap T$}\label{f.B11NcapT}
\end{center}
\end{figure}
Similar to the case of $Q_{10}$, every loop must be carried by $g$, which now is a sub-train track of $\partial B_{11}^n$. But one can easily observe that every loop carried by $g$ also should be slope $4$.
The corresponding branched surface $B_{11}$ can be obtained by
attaching a type I sector to $B_{11}^n$ along
$g$. The set of the corresponding background  manifolds is
$\{M(r)=M(\frac{q}{p})\mid |4p-q|>1\}$.

\section{Transverse orientation of branched surface}\label{s.trori}
Let $B$ be a branched surface in a closed orientable $3$-manifold $M$, and $N(B)$ be a regular neighborhood of $B$ with a (semi-)I-bundle induced by a projection $\pi: N(B)\to B$.
We say that a loop $c\subset B$ is \emph{legal} if for every intersectional point $P$ of $c$ and the branched locus of $B$, $c$ crosses the branched locus either from two folds side to one fold side or from one fold side to two folds side.
We say that $B$ is \emph{transversely orientable} if the (semi-)I-bundle on $N(B)$ is orientable, equivalently, for every legal loop  $c\subset B$, the (semi-)I-bundle restricted on  $c$
is orientable.

Transverse orientation of branched surface is useful for us because of the following lemma.
\begin{lemma}\label{l.triori}
Let $B$ be a transversely orientable branched surface in a closed orientable $3$-manifold $M$,
and $\cL$ be a lamination carried by $B$. Then $\cL$ is orientable and transversely orientable.
\end{lemma}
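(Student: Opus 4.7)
The plan is to produce the transverse orientation of $\cL$ directly from the chosen orientation of the $I$-bundle $\pi:N(B)\to B$, and then to deduce orientability of $\cL$ from transverse orientability together with the orientability of the ambient $3$-manifold $M$.

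First, I would unpack what transverse orientability of $B$ gives us. Because the $(\mathrm{semi})$-$I$-bundle on $N(B)$ is orientable, we may coherently orient every $I$-fiber, i.e.\ choose a continuous (nowhere vanishing) vector field $\xi$ along $N(B)$ tangent to the fibers. The only thing to check is that such a coherent choice exists globally, and this is exactly the criterion given: on every legal loop $c\subset B$, the restriction of the $I$-bundle is orientable, so no monodromy around $c$ can flip $\xi$. This step is just unraveling definitions.

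Next, since $\cL$ is carried by $B$, after the finite splitting in the definition we may assume $\cL\subset N(B)$ and every leaf of $\cL$ meets every $I$-fiber it touches transversely (and in a single point of that fiber locally). At each point $p\in\cL$, the fiber orientation $\xi(p)$ is transverse to $T_p\cL$ and supplies a coherent choice of ``positive side.'' This defines a continuous section of the normal line bundle of $\cL$ in $M$, i.e.\ a transverse orientation of $\cL$. Continuity is immediate away from $\partial_v N(B)$, and along $\partial_v N(B)$ it follows because the fibers on either side of a branch curve fit together into a single oriented fiber of $N(B)$. This yields the transverse orientability of $\cL$.

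Finally, I would deduce orientability of $\cL$ from its transverse orientability. For any leaf $L$ of $\cL$, the ambient orientation of $M$ together with the transverse orientation coming from $\xi$ along $L$ determines an orientation of $TL$ via the short exact sequence $0\to TL\to TM|_L\to \nu L\to 0$; since both $TM|_L$ and $\nu L$ are oriented, so is $TL$. Because this construction is pointwise and continuous, it gives a global orientation of each leaf, varying continuously in the transverse direction, hence an orientation of $\cL$. The only potential obstacle is checking that the transverse orientation really is continuous across the branch locus of $B$, but this is guaranteed by the very definition of transverse orientability of $B$ together with the fact that $\cL$ is transverse to the $I$-fibers of $N(B)$; once this is verified, the rest is formal.
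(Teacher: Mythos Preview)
Your proof is correct and follows essentially the same strategy as the paper: use the transverse orientability of $B$ to transversely orient $\cL$, then invoke orientability of $M$ to get orientability of the leaves. The presentation differs slightly: you construct a global fiberwise vector field $\xi$ on $N(B)$ and restrict it to $\cL$, whereas the paper argues loop by loop, projecting an arbitrary closed curve $c$ in a leaf to a legal loop $\pi(c)\subset B$ and checking that the holonomy of $\cL$ along $c$ (read off from the oriented $I$-bundle over $\pi(c)$) preserves orientation. These are equivalent formulations of the same fact---a line bundle is trivial iff its monodromy around every loop is trivial---so neither approach buys anything the other does not; your version is perhaps marginally more direct, while the paper's makes explicit use of the ``legal loop'' criterion introduced just before the lemma.
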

\begin{proof}
Since $M$ is orientable, $\cL$ is orientable if and only if $\cL$ is transversely orientable.
Therefore, we only need to show that $\cL$ is transversely orientable.
Let $l$ be a leaf of $\cL$ and $c$ be a closed curve in $l$. Up to isotopy of $c$ in $l$,
we can assume that $c$ satisfies that the loop $\pi(c)$ in $B$ is legal. By the condition
that $B$ is transversely orientable, the  (semi-)I-bundle on $\pi(c)$ induced by the  (semi-)I-bundle on $N(B)$ is orientable. Further notice that the  (semi-)I-bundle on $N(B)$
is transverse to $\cL$. Then the holonomy of $\cL$ along  $c$ induced by   the  (semi-)I-bundle on $N(B)$ is orientable.  Thereore, $\cL$ is transversely orientable.
\end{proof}

For example, recall that (Section \ref{sss.Q1}) $B_3$ is an embedded Klein bottle, therefore, $B_3$ is not transversely orientable. In the following proposition, we will check that
some branched surfaces in Schwider's list are transversely orientable.

\begin{proposition}\label{p.triori}
Each of $B_6$, $B_7$, $B_8$, $B_9$ is transversely orientable in every corresponding background manifold $M(r)$.
\end{proposition}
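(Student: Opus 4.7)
The plan is to verify transverse orientability of each $B \in \{B_6, B_7, B_8, B_9\}$ by direct inspection, exhibiting a globally consistent assignment of normal directions to the sectors of $B$. By the equivalent characterization recalled just before Lemma~\ref{l.triori}, this reduces to showing that for every legal loop $c \subset B$ the normal $I$-bundle along $c$ is trivial; concretely, one orients each sector and checks compatibility at every smooth arc of the branch locus.

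I would split the check along $T$. For the piece $B_i^v = B_i \cap V$, the sectors are the one or two annular sectors (of type~I and/or type~II) together with the attached disk sectors shown in Figure~\ref{f.attdisk}. Since $V$ is an orientable solid torus, a choice of normal on each annulus propagates via the local branching model of Figure~\ref{f.attdisk} to a compatible normal on each attached disk, so $B_i^v$ is transversely oriented without obstruction. For the piece $B_i^n = B_i \cap N$, a non-branched sector is a sheet whose two $T$-boundary arcs are the two preimages of a connector of $Q_i = B_i \cap \sigma$ under the double cover $B_i \cap T \to Q_i$. Such a sheet is a rectangle and hence orientable, so locally each sector receives a transverse orientation from a choice of side on either of its two $T$-arcs. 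The individual choices must agree when the sheets close up through $\sigma$ and at the branch arcs where multiple connectors of $Q_i$ meet; this imposes a compatibility condition that must be verified case by case. Matching the resulting transverse orientations of $B_i^n$ and $B_i^v$ along $T$ is then automatic up to a global sign.

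The main obstacle is the compatibility at the connectors in $\sigma$. An analogous analysis applied to $B_3$ fails, because the medium connectors in $Q_3$ force $B_3^n$ to close up into a once-punctured Klein bottle. For $B_6, B_7, B_8, B_9$ the corresponding patterns are different, and I would verify, using the connector structure of each $Q_i$ (recorded in the pictures of $B_i^n\cap T$ in Figures~\ref{f.B6NcapT}, \ref{f.B7NcapT}, \ref{f.B8NcapT}, \ref{f.B9NcapT} together with the paragraphs defining the associated branched surfaces), that every legal loop in $B_i^n$ has trivial normal-line holonomy. The argument thus reduces to a mechanical case-by-case verification guided by these figures.
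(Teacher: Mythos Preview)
Your approach is correct and is essentially the same as the paper's: both reduce the verification to exhibiting a consistent transverse orientation on the pieces $B_i^n$ and $B_i^v$ and checking compatibility, ultimately by direct inspection of the connector patterns in Figures~\ref{f.B6NcapT}--\ref{f.B9NcapT}.

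The only organizational difference worth noting is that the paper makes the reduction to a one-dimensional problem more explicit. Because $B_i^n$ is in good position (Definition~\ref{d.goodp}), the piece $B_i^n \setminus \sigma$ is a product $(B_i^n\cap T)\times(0,1]$, so a transverse orientation on $B_i^n$ is the same data as a transverse orientation on the train track $B_i^n\cap T\subset T$ that descends through the double cover $B_i^n\cap T\to Q_i$. The paper simply draws such an orientation on $B_6^n\cap T$ (in a new figure), checks by inspection that it descends to $Q_6$, and then extends it across the annular and disk sectors in $V$; the cases $B_7,B_8,B_9$ are handled identically from the arrow decorations already present in Figures~\ref{f.B7NcapT}--\ref{f.B9NcapT}. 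Your formulation in terms of rectangular sheets and ``compatibility at the connectors in $\sigma$'' is exactly this double-cover condition, just phrased at the two-dimensional level; the paper's packaging makes the case check slightly quicker to carry out.
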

\begin{proof}
The right part of Figure \ref{f.B6NcapTor} endows a transverse orientation on the branched $1$-manifold $B_6^n \cap T$ in $T$, which can induce a transverse orientation on $B_6^n \cap \sigma$ in $\sigma$. The induced transverse orientation on $B_6^n \cap \sigma$ can be found in
the left part of Figure \ref{f.B6NcapTor}. By the spine structure of $N$ (see Section \ref{ss.spine}), these transverse orientations naturally induce a transverse orientation on $B_6^n$ on $N$.
\begin{figure}[htp]
\begin{center}
  \includegraphics[totalheight=3.3cm]{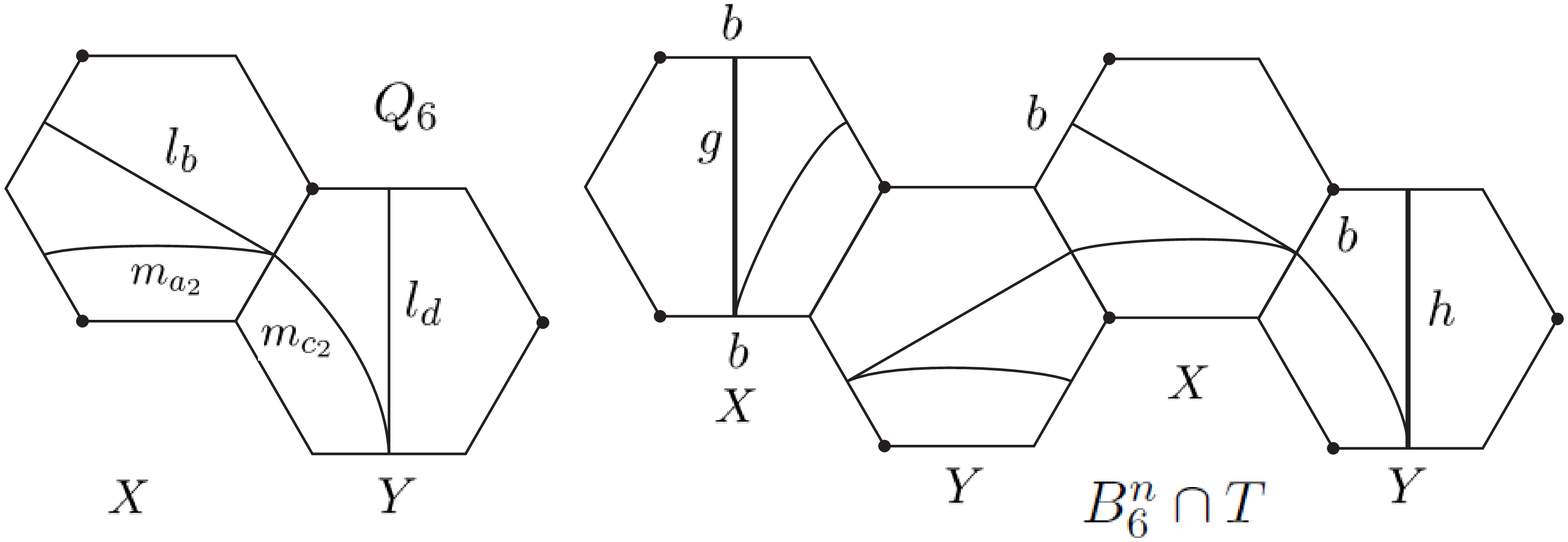}\\
 \caption{transverse orientation on $B_6^n$}\label{f.B6NcapTor}
\end{center}
\end{figure}

Recall that $B_6$ can be obtained from $B_6^n$ by gluing:
\begin{enumerate}
  \item an annular sector $P$ meeting $g$ and $h$;
  \item and several disk sectors meeting the other arcs of $\partial B_6^n$.
\end{enumerate}
By observing the right part of Figure \ref{f.B6NcapTor}, one can easily get that the
transverse orientation on $B_6^n \cap T$ can be extended to $P$.
Further notice that each branched circle in $B_6$ is
an extension of several branched intervals of $B_6^n$, and  recall that the
transverse orientation on $B_6^n \cap T$ can be extended to $P$, the transverse orientation on $B_6^n \cap T$ can be naturally extended to the disk sectors. Therefore, the
transverse orientation on $B_6^n \cap T$ can be extended to $B_6$. One also can  read  page $88$ and page $89$ of \cite{Sch} for a slightly different explanation about the transverse orientation on $B_6$.

One can similarly check the existence of transverse orientations on $B_7$, $B_8$ and $B_9$
by using the transverse orientations on the branched $1$-manifolds in
Figure \ref{f.B7NcapT}, \ref{f.B8NcapT} and \ref{f.B9NcapT}.
 \end{proof}

 The following corollary is a direct consequence of Lemma \ref{l.triori} and Proposition \ref{p.triori}.

 \begin{corollary}\label{c.B6789nocarry}
 Let $\cL$ be an essential lamination on some $M(r)$, which is  carried by one of $B_6$, $B_7$,
 $B_8$ and $B_9$. Then $\cL$ is transversely orientable.
 \end{corollary}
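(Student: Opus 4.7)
The plan is very short since this is just the composition of the two preceding results. First I would invoke Proposition \ref{p.triori} for whichever branched surface $B \in \{B_6, B_7, B_8, B_9\}$ carries the given essential lamination $\cL$, concluding that $B$ is transversely orientable as a branched surface in $M(r)$. The background manifold $M(r)$ is closed and orientable, so the hypotheses of Lemma \ref{l.triori} are satisfied.

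Then I would apply Lemma \ref{l.triori} directly: since $\cL$ is carried by a transversely orientable branched surface $B$ in the closed orientable $3$-manifold $M(r)$, the lemma immediately yields that $\cL$ is orientable and transversely orientable, which is the desired conclusion. There is no obstacle here; the work has already been done in establishing Proposition \ref{p.triori} (the explicit verification of transverse orientations on the branched $1$-manifolds in Figures \ref{f.B6NcapT}, \ref{f.B7NcapT}, \ref{f.B8NcapT} and \ref{f.B9NcapT} and their extension across the added annular and disk sectors) and Lemma \ref{l.triori} (the holonomy argument along legal loops), so the corollary is a one-line consequence.
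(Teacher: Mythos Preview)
Your proposal is correct and takes essentially the same approach as the paper: both combine Proposition~\ref{p.triori} with Lemma~\ref{l.triori}. The only cosmetic difference is that the paper phrases it contrapositively (assuming $\cL$ is not transversely orientable and deriving a contradiction with Proposition~\ref{p.triori}), whereas you apply the two results directly; the logical content is identical.
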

 \begin{proof}
 Set $\cL$ is carried by $B$ where $B\in \{B_6, B_7, B_8, B_9\}$.
 We assume that $\cL$ is not transversely orientable.
 By Lemma \ref{l.triori}, $B$ must be not transversely orientable.
 This conflicts to the conclusion of   Proposition \ref{p.triori}.
 \end{proof}

 We remark that Corollary \ref{c.B6789nocarry} will be very useful during the proof of our main result: Theorem \ref{t.main}.

\section{Anosov flows and Anosov laminations}\label{s.Anosov}
In this section, we will firstly introduce some  properties
 about Anosov foliations and Anosov laminations,
then we will  list some  useful facts about three dimensional Anosov flows.
Finally,
 we will build a useful criterion to decide when the Anosov flow associated to an Anosov foliation/lamination on
$M(r)$ ($r\in \ZZ$) is topologically equivalent to $X_t^r$,
and discuss orientations about three dimensional Anosov flows.
 Notice that we omit some basic definitions and properties.
 A careful reader can find some related information in \cite{Fen1}, \cite{BBY} and \cite{Bart}.

Let $X_t$ be an Anosov flow on a closed $3$-manifold $M$. Recall that we denote by \emph{Anosov foliation} the foliation formed by the union of weak stable/unstable manifolds of $X_t$, and  \emph{Anosov lamination} the lamination obtained by splitting finitely many leaves of an Anosov foliation. Here `splitting' is a standard surgery in foliation theory, whose serious definition can be found in Example 4.14 of Calegari's book \cite{Cal}.  Notice the fact that a weak stable/unstable manifold of $X_t$ is one of
an immersed open annulus, an immersed open Mobius band and an immersed plane. Then a complement connected component of an Anosov lamination can be endowed with one of  the following three types of structures:
\begin{enumerate}
  \item $I$-bundle over an open annulus;
  \item twisted $I$-bundle over an open Mobius band;
  \item $I$-bundle over a plane.
\end{enumerate}
This fact implies the following elementary but useful proposition.

\begin{proposition}\label{p.Anobracomp}
Let $B$ be an essential branched surface on a closed $3$-manifold $M$.
If $B$ fully carries an Anosov lamination on $M$,
then every connected component $W_0$ of the  manifold $W(B)=M-\mbox{int} (N(B))$ should carries an $I$-bundle (regular or twisted)  coherent to $\partial_v W(B)$, i.e the $I$-bundle can be extended to an $I$-bundle on $\partial_v W(B)$. Moreover, $W_0$ should be one of the following three types:
\begin{enumerate}
  \item $W_0$ is homeomorphic to a solid torus parameterized by
 $A= S^1 \times [-1,1] \times [0,1]$, and $\partial_v W_0$ is the union of two annuli with the induced parameters $S^1 \times \{-1\} \times [0,1]$ and  $S^1 \times \{1\} \times [0,1]$;
  \item $W_0$ is homeomorphic to a solid torus, and  $\partial_v W_0$ is an annulus
  so that a core of $\partial_v W_0$ runs along a core of $W_0$ twice;
  \item $W_0$ is homeomorphic to a three ball parameterized by
 $D\times [0,1]$ where $D$ is a disc, and $\partial_v W_0 = \partial D \times [0,1]$.
\end{enumerate}
\end{proposition}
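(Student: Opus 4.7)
The plan is to exploit the $I$-bundle structure on every complementary region of the Anosov lamination $\cL$ fully carried by $B$---the trichotomy recalled in the paragraph preceding the proposition---and transfer it down to the components of $W(B)$ lying inside those regions.

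First, arrange up to isotopy in $N(B)$ that $\partial_h N(B)\subset \cL$; this is the standard setup when $\cL$ is fully carried by $B$. Since $\cL\subset N(B)$, every connected component $W_0$ of $W(B)$ is disjoint from $\cL$ and therefore sits inside a single connected component $C$ of $M\setminus \cL$. By the trichotomy, $C$ admits an $I$-bundle projection $\pi_C\colon C\to \Sigma_C$ with $\Sigma_C$ an open annulus, an open M\"obius band, or a plane. The fibers of $\pi_C$ are arcs transverse to $\cL$ joining one boundary leaf of $C$ to the other; likewise the fibers of $N(B)\to B$ restricted to $\partial_v N(B)\cap C$ are transverse arcs between those same two leaves. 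Using the uniqueness of such transverse arcs up to isotopy relative to $\cL$, I can arrange that $\partial_v W_0$ is saturated by fibers of $\pi_C$. Setting $\Sigma_0:=\pi_C(W_0)$, one obtains $W_0=\pi_C^{-1}(\Sigma_0)$ with $\Sigma_0$ a compact subsurface of $\Sigma_C$, and the restriction $\pi_C|_{W_0}$ is an $I$-bundle on $W_0$ coherent with $\partial_v W(B)$; this proves the first assertion.

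It then remains to classify the compact subsurface $\Sigma_0\subset \Sigma_C$. Because none of (open annulus, open M\"obius band, plane) contains a closed surface, $\Sigma_0$ must have non-empty boundary, and it is planar unless $\Sigma_C$ is the M\"obius band. The three listed cases correspond precisely to $\Sigma_0$ being a disk (type 3), an annulus (type 1), or a M\"obius band (type 2). To rule out a higher-complexity $\Sigma_0$---say, a planar surface with at least three boundary components---one invokes the essentiality of $B$: since $\Sigma_C$ carries at most one essential isotopy class of simple closed curves, any extra boundary circle of $\Sigma_0$ must bound a disk in $\Sigma_C$ on the side opposite $W_0$, and this disk, together with the corresponding annulus of $\partial_v N(B)$, produces either a monogon in $W(B)$ or a compressing disk for $\partial_h N(B)$ in $W(B)$, contradicting Definition \ref{d.ebra}.

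The main obstacle will be this classification step: rigorously converting each ``extra'' boundary component of $\Sigma_0$ in $\Sigma_C$ into a forbidden configuration (monogon or compressing disk) of Definition \ref{d.ebra}. This requires a precise understanding of how $\Sigma_0$ embeds in $\Sigma_C$ and of the branch directions along the annular components of $\partial_v N(B)$ corresponding to $\partial \Sigma_0$, so as to actually exhibit the offending disk or monogon inside $W(B)$.
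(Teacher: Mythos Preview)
The paper does not actually supply a proof of this proposition: it is stated immediately after the trichotomy for complementary regions of an Anosov lamination ($I$-bundles over an open annulus, an open M\"obius band, or a plane) and declared to be an ``elementary but useful'' consequence of that fact. Your first two paragraphs are exactly the natural way to make this implication explicit, and in that sense your approach and the paper's (implicit) one coincide.

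The obstacle you isolate in the last paragraph is genuine and is glossed over by the paper as well. One caution about your proposed resolution: when an ``extra'' boundary circle of $\Sigma_0$ bounds a disk $D\subset\Sigma_C\setminus\Sigma_0$, the resulting $3$-ball $\pi_C^{-1}(D)$ lies on the $N(B)$ side of the corresponding component of $\partial_v N(B)$, not on the $W(B)$ side. Hence it does not directly yield a monogon or a compressing disk for $\partial_h N(B)$ \emph{in} $W(B)$ as Definition~\ref{d.ebra} requires; rather, it produces a disk sector of $B$ over which the lamination has no interior intersection with the $I$-fibers. So the contradiction, if there is one, has to come from analyzing what such a sector forces on $B$ or on the carried lamination, not from a direct appeal to the no-monogon or incompressibility clauses. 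You have correctly located where the work lies, but the mechanism you sketch needs to be rethought.
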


In the proof of Theorem \ref{t.main}, we will also use the following fundamental result
due to Fenley
(Theorem 1.1 and Theorem 1.2 of \cite{Fen2}), which concerns
homotopy information about periodic orbits of three dimensional Anosov flows.

\begin{theorem}\label{t.fenley}
Let $X_t$ be an Anosov flow on a closed $3$-manifold $M$. If $\omega$ is a periodic orbit
of $X_t$ and $\omega$ is freely homotopic to $a^k$ ($k\in \ZZ$) where $a$ is a closed
curve in $M$, then $1\leq |k| \leq 2$.
Moreover, if there exists a periodic orbit $\omega$ of $X_t$ so that $\omega$ is freely homotopic to either $a^2$ or $a^{-2}$,
then either the stable foliation or the unstable foliation of $X_t$ is not transversely orientable.
\end{theorem}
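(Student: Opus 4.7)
The plan is to pass to the universal cover $\Wi M$ with the lifted flow $\Wi X_t$ and work entirely in the orbit space $\cO=\Wi M/\Wi X_t$. By the structure theorems of Fenley and Barbot, $\cO$ is homeomorphic to $\RR^2$ and carries two transverse one-dimensional foliations $\cO^s$ and $\cO^u$ (projections of the lifted weak stable and weak unstable foliations). The deck group $\pi_1(M)$ acts on $\cO$ by foliation-preserving homeomorphisms. A periodic orbit of $X_t$ corresponds to a conjugacy class $[g]\subset\pi_1(M)$ whose representatives have nonempty fixed point set $\mathrm{Fix}(g)\subset\cO$, and at each fixed point act as a local topological hyperbolic map contracting along $\cO^s$ and expanding along $\cO^u$. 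After conjugating inside $\pi_1(M)$, I may take $g=h^k$ with $g$ a deck representative of $\omega$ and $h=[a]$.

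Next I would invoke Fenley's \emph{chain of lozenges} description of $\mathrm{Fix}(g)$: it is a (possibly infinite) linearly ordered discrete family of lozenge corners in $\cO$, and any element of $\pi_1(M)$ commuting with $g$ must preserve this chain as a combinatorial object. Since $h$ commutes with $g=h^k$, it permutes $\mathrm{Fix}(g)$ via an order-preserving shift or an order-reversing involution; combining this with the fact that $g=h^k$ fixes every element of the chain, one concludes that after replacing $h$ by $h^2$ if necessary, $h$ fixes some common fixed point $p\in\mathrm{Fix}(g)$.

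Then I would carry out the local analysis at $p$. Near $p$, $g$ acts as a positive hyperbolic linear map $\mathrm{diag}(\lambda,\lambda^{-1})$ preserving each of the four local half-leaves (two of $\cO^s(p)$, two of $\cO^u(p)$). The element $h$ also fixes $p$ and commutes with $g$, so its local action has the form $\mathrm{diag}(\epsilon_s\mu_s,\epsilon_u\mu_u)$ with $\epsilon_s,\epsilon_u\in\{\pm1\}$ and $\mu_s^k=\lambda$. If $\epsilon_s=\epsilon_u=1$, then $h$ represents an actual periodic orbit $\alpha$ through the projection of $p$; this orbit must coincide with $\omega$, and the relation $g=h^k$ inside the cyclic stabilizer of $p$, together with the primitivity of $g$ (since $g$ represents going once around the embedded orbit $\omega$), forces $|k|=1$. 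If at least one of $\epsilon_s,\epsilon_u$ equals $-1$, then $\epsilon_s^k=\epsilon_u^k=1$ forces $k$ to be even; $h^2$ then has positive linearization, represents a periodic orbit $\beta$, and $g=(h^2)^{k/2}$. Applying the previous step to $h^2$ yields $|k/2|=1$, whence $|k|=2$. Together this proves $1\leq|k|\leq 2$.

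For the moreover statement, when $|k|=2$ the analysis above shows $h$ must carry a nontrivial sign on at least one of $\epsilon_s,\epsilon_u$. Projecting back to $M$, a sign flip in $\epsilon_s$ (resp.\ $\epsilon_u$) means that traversing the loop $a$ reverses the orientation of the stable (resp.\ unstable) leaf direction; equivalently, the weak stable (resp.\ weak unstable) leaf through $\omega$ contains an immersed M\"obius band whose core is a cover of $\omega$. Consequently the corresponding Anosov foliation is not transversely orientable. The main technical obstacle throughout is the second step: establishing the chain-of-lozenges description of $\mathrm{Fix}(g)$ and controlling how commuting deck transformations permute this chain. This is the deepest input of the argument and requires the full force of Fenley's analysis in \cite{Fen2} of the non-Hausdorff behaviour of the lifted stable and unstable foliations.
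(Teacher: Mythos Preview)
The paper does not prove Theorem~\ref{t.fenley}; it is quoted as a known result of Fenley (Theorem~1.1 and Theorem~1.2 of \cite{Fen2}) and used as a black box in the proof of Proposition~\ref{p.typeII1}. So there is no proof in the paper to compare your proposal against.

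That said, your outline is a faithful sketch of Fenley's original argument in \cite{Fen2}: passing to the orbit space $\cO\cong\RR^2$, identifying $\mathrm{Fix}(g)$ as a chain of lozenge corners, using that $h$ commutes with $g=h^k$ to force $h$ to act on this chain, and then doing the local linear analysis at a common fixed point to read off $|k|\in\{1,2\}$ and the orientation signs. One point that deserves a little more care is your passage from ``$h$ permutes the chain'' to ``after replacing $h$ by $h^2$, $h$ fixes some corner $p$'': in Fenley's setting the chain can be infinite, and a commuting element can in principle act as a nontrivial translation on it. What actually rules this out is that $g=h^k$ must fix every corner, so the shift induced by $h$ has finite order dividing $k$; combined with the dichotomy ``shift or order-$2$ flip'' this is enough, but you should state it. Similarly, the primitivity of $g$ in the cyclic stabilizer of $p$ is exactly the statement that the deck transformation representing one traversal of the periodic orbit generates the stabilizer of the corresponding lift; this is standard but worth citing. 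With those two clarifications your sketch is correct and matches Fenley's approach.
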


The following classical result is a special case of Theorem B of Plante \cite{Pl}.

\begin{theorem}\label{t.plante}
Let $M$ be a closed solvable $3$-manifold and $X_t$ be an Anosov flow on $M$.
Then $X_t$ is topologically equivalent to a suspension Anosov flow on $M$.
\end{theorem}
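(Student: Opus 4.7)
The plan is to construct a closed torus cross-section $\Sigma \subset M$ transverse to $X_t$, so that $X_t$ becomes the suspension of its first-return map $\Sigma \to \Sigma$; one then identifies this return map as a hyperbolic toral automorphism via the Franks--Manning classification of Anosov diffeomorphisms of $T^2$. I proceed by reducing to the Sol case, constructing the cross-section using the normal $\ZZ^2$ subgroup of $\pi_1(M)$, and verifying that the return map is Anosov.

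First, I would reduce to the case where $M$ is a Sol-manifold, i.e.\ a torus bundle over $S^1$ with Anosov monodromy. The classification of closed solvable $3$-manifolds lists three geometric families: flat, Nil, and Sol. In the flat and Nil cases $\pi_1(M)$ has polynomial growth, which is incompatible with the exponential volume growth of unstable manifolds of an Anosov flow, via Plante's inequality relating fundamental group growth to the topological entropy of the flow. Hence $M$ must be a Sol-manifold, and $\pi_1(M)$ contains a distinguished normal subgroup $N \cong \ZZ^2$ arising from the torus fiber.

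Second, I would use $N$ to produce a closed transverse torus. Lift the weak stable foliation $\cF^s$ to $\Wi{M} \cong \RR^3$; by Novikov--Palmeira, since $\cF^s$ is taut and Reebless, its leaves are properly embedded planes. Let $L^s$ denote the (possibly non-Hausdorff) leaf space, on which $\pi_1(M)$ acts. The key claim is that the abelian normal subgroup $N$ fixes some leaf of $\Wi{\cF^s}$: a $\ZZ^2$-action on a simply connected $1$-manifold either has a global fixed point or factors through a free action by translations, and the second possibility is excluded because the translation group of $L^s$ has rank at most $1$. Quotienting an $N$-invariant leaf by $N$ yields an embedded torus $\Sigma \subset M$; after a small isotopy along the flow direction, and applying the parallel argument to $\cF^u$ to ensure compatibility with both weak foliations, I may assume $\Sigma$ is transverse to $X_t$.

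Third, once $\Sigma$ is transverse to $X_t$, transitivity of the flow (which holds automatically in the solvable setting, since multiple basic sets would contradict the simple leaf-space structure forced by the abelian normal subgroup) implies every orbit returns to $\Sigma$ in bounded time, so $\Sigma$ is a global cross-section. The first-return map $f: \Sigma \to \Sigma$ inherits the hyperbolic splitting from $X_t$, hence is an Anosov diffeomorphism of $T^2$, and by Franks--Manning is topologically conjugate to a linear hyperbolic automorphism; consequently $X_t$ is topologically equivalent to the corresponding suspension Anosov flow. The main obstacle is the second stage: analyzing the $\ZZ^2$-action on the non-Hausdorff leaf space $L^s$ precisely enough to produce an $N$-invariant leaf, and then ensuring the resulting quotient torus can be made transverse rather than merely tangent to $X_t$.
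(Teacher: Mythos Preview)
The paper does not actually prove this theorem; it is quoted as a special case of Theorem~B of Plante \cite{Pl} and used as an external input. So there is no in-paper argument to compare against, and I will simply comment on the soundness of your outline.

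Your reduction to the Sol case and the Franks--Manning endgame are fine, but the core of Step~2 breaks down. You assert that the normal subgroup $N\cong\ZZ^2$ must fix some leaf of $\Wi{\cF^s}$. This is impossible: the stabilizer in $\pi_1(M)$ of a lifted leaf $\Wi L$ is isomorphic to $\pi_1(L)$, and every leaf of an Anosov weak foliation is a plane, an open annulus, or an open M\"obius band, so $\pi_1(L)$ is trivial or infinite cyclic, never $\ZZ^2$. Your dichotomy ``global fixed point versus translation action'' is already too coarse for $\ZZ^2$-actions on non-Hausdorff simply connected $1$-manifolds, but the decisive point is that the fixed-leaf alternative is ruled out a priori. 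Put differently, the torus you are trying to produce would be a compact leaf of $\cF^s$, and Anosov foliations have no compact leaves.

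Even granting an $N$-invariant leaf, it would be the wrong kind of surface: a leaf of $\cF^s$ contains flowlines, so it is \emph{tangent} to $X_t$, and no ``small isotopy along the flow direction'' can turn a flow-tangent surface into a transverse cross-section. The genuine cross-section has to be manufactured differently. Plante's own argument proceeds by showing that solvability of $\pi_1(M)$ forces the weak foliations to admit nontrivial holonomy-invariant transverse measures (equivalently, a transversely affine structure), from which one extracts a closed $1$-form and hence a fibration over $S^1$; the fiber is then the desired transverse torus. A modern alternative (Solodov, Barbot) first shows the flow is $\RR$-covered and then analyzes the action of $\pi_1(M)$ on the leaf space $\RR$ to produce the fibration. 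In either route the cross-section is transverse to both weak foliations from the start and is not obtained by perturbing a leaf.
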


\begin{remark}\label{r.plante}
Note that $M$ always is homeomorphic to  a mapping torus of an Anosov automorphism $\Phi$ on $T^2$.
Also note that $M$ has a unique torus fibration structure (see, for instance \cite{Thu} or \cite{Fri}),
$X_t$ should be topologically equivalent to  the suspension Anosov flow of either $\Phi$ or $\Phi^{-1}$.
In particular, when $\Phi =A =\left(
                                \begin{array}{cc}
                                  2 & 1 \\
                                  1 & 1 \\
                                \end{array}
                              \right)$,
                              $\Phi$ and $\Phi^{-1}=\left(
                                                             \begin{array}{cc}
                                                               1 & -1 \\
                                                               -1 & 2 \\
                                                             \end{array}
                                                           \right)$
 are conjugate by $C=\left(
                       \begin{array}{cc}
                         0 & 1 \\
                         -1 & 0 \\
                       \end{array}
                     \right)$, therefore, in this case, $X_t$ should be topologically equivalent to  the suspension Anosov flow of $\Phi=A$.
\end{remark}

In the proof of Theorem \ref{t.main}, we will also use the following result due to Yang and the author of this paper (Theorem 1.2 of \cite{YY}),
which is devoted to classify expanding attractors on the figure-eight knot complement $N$.

\begin{theorem}\label{t.YY}
Let $Y_t$ be a smooth flow on $N$ so that  $N$ carries an expanding attractor $\Lambda$ which is the maximal invariant set of $Y_t$.
Then $Y_t$ is topologically equivalent to the DA flow $Y_t^0$ on $N$. In particular,
every orbit-preserving homeomorphism $h: (N, Y_t)\to (N, Y_t^0)$ should satisfy that $h(\Lambda)=\Lambda_0$, where $\Lambda_0$ is the expanding attractor of $(N, Y_t^0)$.
\end{theorem}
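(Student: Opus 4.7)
The plan is to classify the templates that can model a two--dimensional expanding attractor in $N$ and show there is essentially only one, namely the template of $(N, Y_t^0)$.

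First I would apply Williams' structure theory for hyperbolic attractors: the expanding attractor $\Lambda$ is topologically the inverse limit of a two--dimensional branched surface $B_\Lambda \subset N$ equipped with a semi--flow, and $Y_t|_\Lambda$ is topologically conjugate to the induced inverse limit flow. The periodic orbit structure of $Y_t|_\Lambda$ and its hyperbolic splitting are then faithfully encoded by the branching data of $B_\Lambda$, and $\Lambda$ admits a local product structure of ``branched two--complex $\times$ Cantor set''.

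Next I would analyze the global embedding of $B_\Lambda$ in $N$. Since $\Lambda$ is the maximal invariant set of $Y_t$ in $N$, every orbit in $N\setminus \Lambda$ exits through $\partial N$, so $\partial N$ must be transverse to $Y_t$ and play the role of the outer boundary of the basin of attraction. This gives a canonical handle decomposition of $N-\mathrm{int}(\cN(B_\Lambda))$ as a collar of $\partial N$ together with a prescribed gluing onto $\partial_v \cN(B_\Lambda)$, and forces $B_\Lambda$ to be an essential branched surface in $N$ with a very restricted transverse--boundary pattern. Combining this with the incompressibility of $\partial N$ and the template (attractor) constraints on the branch directions, one should conclude that, up to ambient isotopy in $N$ (and up to reversing the semi--flow direction), $B_\Lambda$ coincides with the template $B_0$ of $(N, Y_t^0)$.

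Once the templates are identified, I would build the topological equivalence in two stages. On $\Lambda$ itself, Williams' reconstruction theorem gives a homeomorphism $h_0 \colon \Lambda \to \Lambda_0$ conjugating $Y_t|_\Lambda$ to $Y_t^0|_{\Lambda_0}$, because both attractors are the inverse limit of the same template with the same semi--flow. To extend $h_0$ across the complement, I would use that both flows are gradient--like from $\partial N$ onto the attractor along the stable foliation; following stable leaves from $\partial N$ backward in time lets one match boundary cross--sections and extend $h_0$ to an orbit--preserving homeomorphism $h \colon (N, Y_t) \to (N, Y_t^0)$. The ``in particular'' clause is then automatic: $\Lambda$ is characterized purely dynamically as the maximal invariant set, so any orbit--preserving homeomorphism must send it to $\Lambda_0$.

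The hard part will be the middle step, namely showing that the embedded template $B_\Lambda$ is forced to be $B_0$ up to ambient isotopy. This requires a careful combinatorial analysis of how a two--dimensional branched surface carrying a one--dimensional expanding lamination can embed in the figure--eight knot complement compatibly with a prescribed transverse behaviour on $\partial N$; the rigidity of $N$ (few incompressible surfaces, Schwider--type constraints on essential branched surfaces) should ultimately pin $B_\Lambda$ down, but controlling the branching orientation and the semi--flow orientation simultaneously is the delicate point.
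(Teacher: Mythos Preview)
The paper does not prove this theorem at all: it is quoted verbatim as Theorem~1.2 of the external reference \cite{YY} (Yang--Yu), and is used as a black box in the proofs of Lemma~\ref{l.knotint} and Proposition~\ref{p.typeI}. So there is no ``paper's own proof'' to compare your proposal against.

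That said, a few remarks on your outline as a stand-alone argument. The Williams template strategy is a natural one, and your identification of the hard step is accurate: the entire content of the theorem lies in showing that the embedded template $B_\Lambda$ in $N$ is ambiently isotopic to $B_0$. But your plan for that step is essentially a hope rather than a method. Invoking ``Schwider-type constraints'' is not obviously legitimate here, since Schwider's classification concerns essential branched surfaces in the \emph{closed} manifolds $M(r)$ relative to a spine decomposition, not branched surfaces in $N$ with prescribed boundary behaviour; you would need to redo a normalization argument in the bounded setting. More seriously, template uniqueness up to ambient isotopy is strictly stronger than what you need and may well be false: different templates can present the same attractor, so you should be aiming for equivalence of the induced inverse-limit systems, not isotopy of the branched surfaces. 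A more robust route (and closer to what one expects the cited paper does) is to exploit the fact that $N$ fibers over $S^1$ with punctured-torus fiber, find a Birkhoff-type cross section for $Y_t$, and reduce to the classification of pseudo-Anosov homeomorphisms of the once-punctured torus, where the monodromy is forced. Your extension argument from $\Lambda$ to all of $N$ via stable leaves, and the ``in particular'' clause, are fine once the core classification is in hand.
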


Let us briefly introduce expanding attractor $\Lambda_0$ and the DA flow $Y_t^0$. One can find more details in \cite{YY}. Let  $Y_t$ be a smooth flow on $N$ so that  $Y_t$ is transverse into $N$ along the boundary torus $T$. If the maximal invariant set of $Y_t$, $\Lambda$ is a uniformly hyperbolic attractor with topological dimension $2$, we say that $\Lambda$ is
 an \emph{expanding attractor}, and $N$ \emph{carries} $\Lambda$. Note that an expanding attractor is always an essential lamination on the underling manifold.
 We say that a periodic orbit $\gamma$ of $\Lambda$ is a \emph{boundary periodic orbit}, if there exists a separatrix  of $W^s (\gamma)\setminus \gamma$ is free, i.e. there exists a connected component $W_{loc,+}^s (\gamma)$
  of $W_{loc}^s (\gamma)\setminus \gamma$  so that $W_{loc,+}^s (\gamma)$ is disjoint with $\Lambda$. Here $W_{loc}^s (\gamma)$
  is a small tubular neighborhood of $\gamma$ in $W^s (\gamma)$. We remark that in fact the number of the boundary periodic orbits are equivalent to the number of the boundary leaves of the attractor lamination $\Lambda$.

 The  figure-eight knot complement $N$ carries a canonical expanding attractor $\Lambda_0$ through
 DA-surgery. Let us be more precise. Starting with a suspension Anosov flow on the sol-manifold
 $W_A= T^2 \times [0,1]\setminus (x,1)\cong (A(x),0)$ induced by the vector field $(0, \frac{\partial}{\partial t})$, we perform a DA-surgery on a small tubular neighborhood of the
 periodic orbit $\gamma$ associated to the origin of $T^2$. By cutting a small and suitable open tubular neighborhood of $\gamma$, one get a compact $3$-manifold $N_A$ which is homeomorphic to $N$ so that the surgeried flow $Y_t^0$  is transverse to $\partial N_A$ and the maximal invariant set of $Y_t^0$ is an expanding attractor $\Lambda_0$.  For simplicity, we set $N_A =N$ and we call $Y_t^0$ the \emph{DA flow}. The weak stable manifolds of $\Lambda_0$ form an foliation
 $\cF_{\Lambda_0}^s$ on $N$ which is transverse to $T=\partial N$. Moreover, $f_{\Lambda_0}^s= \cF_{\Lambda_0}^s \cap T$ is a $1$-foliation which is the union of two Reeb annuli so that the
 compact leaf of $f_{\Lambda_0}^s$ is slope-$\infty$.

 Theorem \ref{t.YY} and Theorem \ref{t.plante} imply the following very useful lemma during the proof of Theorem \ref{t.main}.

 \begin{lemma}\label{l.knotint}
 Let $X_t$ be an Anosov flow on some $M(r)$. If there exists a periodic orbit $\omega$ in $X_t$
 which is isotopic to a core $c(V)$ of $V$ in $M(r)$, then $r\in \ZZ$ and $X_t$ is topologically equivalent to $X_t^r$.
 \end{lemma}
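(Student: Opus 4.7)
The plan is to reduce to the classification of Anosov flows on $M(0)$ (Theorem~\ref{t.plante}) via a DA-surgery along $\omega$ and the attractor rigidity of Theorem~\ref{t.YY}. Since $\omega$ is isotopic to $c(V)$ in $M(r)$, removing a small open tubular neighborhood $U(\omega)$ gives a manifold $M_\omega:=\overline{M(r)-U(\omega)}$ homeomorphic to the figure-eight knot complement $N$; I fix an orientation-preserving identification $\phi:M_\omega\to N$ sending the meridian $m_\omega$ of $\omega$ to the slope $pl+qm$ on $\partial N$, where $r=q/p$ in lowest terms with $p>0$. Performing a DA-surgery on $X_t$ in a neighborhood of $\omega$ produces a smooth flow $Z_t$ on $M_\omega$, inward transverse to $\partial M_\omega$, whose maximal invariant set is a two-dimensional expanding attractor $\Lambda$. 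Transporting to $N$ via $\phi$ and invoking Theorem~\ref{t.YY} furnishes an orbit-preserving homeomorphism $h:(N,\phi_*Z_t)\to(N,Y_t^0)$ with $h(\phi(\Lambda))=\Lambda_0$.

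To extract the surgery coefficient, reverse the DA-surgery. By the Dehn-Fried-Goodman construction together with Shannon's equivalence between Goodman's smooth and Fried's topological versions, the inverse DA on $(N,Y_t^0)$ yields an Anosov flow on a closed manifold precisely when the filling slope on $\partial N$ is $l+km$ for some integer $k$, in which case the resulting flow is $X_t^k$ on $M(k)$. Since the filling of $(M_\omega,Z_t)$ by $U(\omega)$ of meridian $m_\omega$ recovers our Anosov flow $X_t$ on $M(r)$, the corresponding filling on the standard side must also be allowed: $h_*(\phi_*(m_\omega))=h_*(pl+qm)=\pm(l+km)$ (slopes are unoriented) for some $k\in\ZZ$. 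The self-homeomorphism $h:N\to N$ represents an element of the orientation-preserving mapping class group $MCG^+(N)$; by Mostow rigidity this group is finite, and since $l$ is distinguished as the unique slope producing a sol-manifold under Dehn filling of $N$ and $m$ as the unique slope producing $S^3$, its action on $H_1(\partial N)=\ZZ\langle l\rangle\oplus\ZZ\langle m\rangle$ is diagonal by $\pm I$ in the basis $(l,m)$. Hence $h_*(pl+qm)=\pm(pl+qm)=\pm(l+km)$ forces $p=1$ and $q=k$, yielding $r=k\in\ZZ$; and by construction the recovered flow is $X_t^k=X_t^r$, with Theorem~\ref{t.plante} ensuring that the companion case $k=0$ produces the suspension Anosov flow on $M(0)$ and anchors the identification.

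The main obstacle will be the slope analysis in the second step: carefully identifying the flow-framing longitude of $\omega$ induced by the stable/unstable manifolds of $X_t$, tracking it through the DA-surgery and through the Yang-Yu equivalence, and verifying the ``only integer slopes yield Anosov'' half of the filling classification, before invoking the diagonal $\pm I$ action of $MCG^+(N)$ on $H_1(\partial N)$ to close the argument.
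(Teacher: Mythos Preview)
Your opening is exactly the paper's: perform a DA surgery along $\omega$, land on the figure-eight complement $N$ with an expanding attractor, and invoke Theorem~\ref{t.YY} to identify it with $(N,Y_t^0,\Lambda_0)$. From that point the two arguments diverge.

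The paper extracts $r\in\ZZ$ directly from the $I$-bundle structure: since $\cF^s$ is recovered from the attractor lamination $\Lambda_0$ by collapsing the complementary region, $M(r)-\Lambda_0$ must be an $I$-bundle over the split leaf; knowing the explicit boundary foliation $f^s_{\Lambda_0}$ on $\partial N$ (two Reeb annuli with slope-$\infty$ closed leaves) then forces the filling meridian to meet those closed leaves once, i.e.\ $p=1$. With $r\in\ZZ$ in hand, the paper reads off that $W^u(\omega)$ and $W^u(\gamma)$ carry the same local framing, performs a $(-r)$-DFG surgery along $\omega$ to produce an Anosov flow on $M(0)$, applies Plante (Theorem~\ref{t.plante}) to identify it with $X_t^0$, observes that the equivalence must send $\omega$ to $\gamma$ (since $\gamma$ is the unique periodic orbit whose complement is $N$), and undoes the surgery.

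Your route instead posits that inverse-DA on $(N,Y_t^0)$ yields an Anosov flow \emph{exactly} for slopes $l+km$, and then uses the $\pm I$ action of $MCG^+(N)$ on $H_1(\partial N)$ to force $p=1$. Two problems. First, the ``only integer slopes yield Anosov'' direction---which you flag as the main obstacle---is precisely the content in need of proof; there is no off-the-shelf statement asserting that an arbitrary Dehn filling of a DA flow fails to be Anosov, and the natural way to establish it is exactly the $I$-bundle/complement analysis the paper uses, at which point the MCG detour is superfluous. Second, you assert $h\in MCG^+(N)$ without justification; Theorem~\ref{t.YY} supplies only an orbit-preserving homeomorphism, and $N$ admits orientation-reversing self-homeomorphisms. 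One can still salvage $p=1$ in the orientation-reversing case, but then your identification of $k$ with $r$ rather than $-r$ requires the amphichirality $M(r)\cong M(-r)$ to close. Finally, your appeal to Theorem~\ref{t.plante} ``for the companion case $k=0$'' is misplaced: in your scheme Plante does no work, whereas in the paper it is what pins down the flow after passing to $M(0)$.
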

 \begin{proof}
 Note that $\omega$ is a periodic orbit of $X_t$ which is isotopic to $c(V)$ in $M(r)$, then  $M(r)- U(\omega)$ is isotopic to $N$ in $M(r)$. Here $U(\omega)$ is any open tubular neighborhood of $\omega$.
 Therefore, by doing DA surgery on $(M(r), X_t)$ along $\omega$, we can assume that we get a DA flow $Y_t$
 on $N$ with maximal invariant set an expanding attractor $\Lambda$.

 Due to Theorem \ref{t.YY}, up to  topological equivalence, we can assume that   $Y_t =Y_t^0$  and
$\Lambda=\Lambda_0$. This fact is the point of the proof.
Firstly, it implies that the complement of the attractor lamination $\Lambda_0$ in $M(r)$ admits an $I$-bundle. With the position of $\Lambda_0$ in mind, one can immediately get that $r\in \ZZ$.
Secondly, when  $r\in \ZZ$, the fact further implies that  the weak unstable manifolds $W^u (\omega)$ of $X_t$
and $W^u (\gamma)$ of $X_t^r$ have the same local framing information, i.e. there are a small  annulus tubular
 neighborhood $W_{loc}^s (\omega)$ of $\omega$ in  $W^s (\omega)$ and  a small   annulus tubular neighborhood $W_{loc}^u (\gamma)$  of $\gamma$ in  $W^u (\gamma)$ so that there exists a self-homeomorphism $h$ on $M(r)$
 which satisfies that $h(W_{loc}^u (\omega))=W_{loc}^u (\gamma)$.

 Recall that the Anosov flow $X_t^r$ is obtained  by doing $r$-Dehn-Fried-Goodman surgery
along $\gamma$ on $(M(0),X_t^0)$, further with the fact that the weak unstable manifolds $W^u (\omega)$ of $X_t$
and $W^u (\gamma)$ of $X_t^r$ have the same local framing information, we have that, after doing $-r$-Dehn-Fried-Goodman surgery along $\omega$ on $(M(r),X_t)$, we can obtain a new Anosov flow $Z_t$ on $M(0)$. By Theorem \ref{t.plante}
and Remark \ref{r.plante}, $Z_t$ is topologically equivalent to $X_t^0$. Further note the fact that $\gamma$ is the unique periodic orbit of $X_t^0$ so that the path closure of $\gamma$ in $M(r)$ is homeomorphic to $N$.
Then there exists an orbit-preserving homeomorphism $h$ between $Z_t$ and $X_t^0$ so that $h(\omega)=\gamma$. Hence, $X_t$ is also topologically equivalent to the Anosov flow obtained  by doing $r$-Dehn-Fried-Goodman surgery
along $\gamma$ on $(M(0),X_t^0)$. Therefore, $X_t$ is topologically equivalent to $X_t^r$.
  \end{proof}

  Let $X_t$ be an Anosov flow on a closed orientable $3$-manifold $M$, $\cF^s$ and $\cF^u$
  be the stable and the unstable foliations of $X_t$ respectively. Since $M$ is orientable, for a small Poincare section of any
  periodic orbit of $X_t$, the corresponding first return map  should be orientable. Therefore,
  $\cF^s$ is transversely orientable if and only if $\cF^u$ is transversely orientable. Then, we can define that the Anosov flow $X_t$ is \emph{coorientable} (\emph{non-coorientable}, resp.) if $\cF^s$ is (is not, resp.) transversely orientable. The result in the following lemma maybe standard, but we do not find any references about it. Therefore, we prove it here.

  \begin{lemma}\label{l.infper}
  Let $X_t$ be a non-coorientable transitive Anosov flow on a closed orientable $3$-manifold $M$.
  Then there are infinitely many periodic orbits of $X_t$ so that each of their weak stable/unstable manifolds is not orientable.
  \end{lemma}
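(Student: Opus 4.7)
The plan is to lift $X_t$ to a transitive Anosov flow on the double cover of $M$ that makes $\cF^s$ transversely orientable, and then to compare the counts of periodic orbits on the two sides. Because $\cF^s$ is not transversely orientable, it determines a surjective homomorphism $\phi:\pi_1(M)\to\ZZ/2$ recording whether the transverse orientation of $\cF^s$ reverses along a given loop; let $p:\tilde M\to M$ be the corresponding connected double cover, and let $\tilde X_t$ denote the lifted flow. Since $p$ is a local diffeomorphism, $\tilde X_t$ is an Anosov flow on $\tilde M$ and by construction its stable foliation is transversely orientable.

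First I would check that $\tilde X_t$ is itself transitive: the periodic orbits of the transitive Anosov flow $X_t$ are dense in $M$, so their preimages under $p$ are dense periodic orbits of $\tilde X_t$, whence $\mathrm{NW}(\tilde X_t)=\tilde M$; the spectral decomposition for Anosov flows together with the connectedness of $\tilde M$ then forces a single basic set, giving transitivity. Next I would establish the orbit dictionary: since $M$ is orientable, the tubular monodromy around a periodic orbit $\gamma$ of $X_t$ preserves ambient orientation, so $W^s(\gamma)$ and $W^u(\gamma)$ are simultaneously orientable or non-orientable, and this dichotomy coincides with $\phi([\gamma])\in\{0,1\}$. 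Accordingly, $p^{-1}(\gamma)$ consists of two disjoint periodic orbits of $\tilde X_t$ of the same period as $\gamma$ when $\phi([\gamma])=0$, and of a single periodic orbit of twice the period when $\phi([\gamma])=1$.

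With this dictionary in hand I would argue by contradiction. Suppose only finitely many periodic orbits $\gamma_1,\dots,\gamma_n$ of $X_t$ have non-orientable weak stable manifold, and let $P(T)$ and $\tilde P(T)$ count the periodic orbits of $X_t$ and $\tilde X_t$ of period at most $T$. The orbit dictionary gives $\tilde P(T) = 2\bigl(P(T)-n\bigr) + \#\{i : 2\,\mathrm{period}(\gamma_i)\leq T\} = 2P(T) + O(1)$ for large $T$, so $\tilde P(T)/P(T)\to 2$. On the other hand, topological entropy is preserved under finite covers, so $h(\tilde X_t)=h(X_t)=:h$, and the Parry-Pollicott prime orbit theorem applied to both transitive Anosov flows yields $P(T)\sim\tilde P(T)\sim e^{hT}/(hT)$, forcing $\tilde P(T)/P(T)\to 1$ and producing the desired contradiction.

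The hard part is justifying the prime orbit asymptotic in the final step. Strictly, Parry-Pollicott require topological mixing, but a transitive Anosov flow on a closed $3$-manifold is either mixing or topologically equivalent to a suspension of a hyperbolic toral automorphism (by Plante), and $\tilde P(T)/P(T)\to 1$ can be verified in each subcase. An alternative that bypasses this is to compare dynamical zeta functions: the orbit dictionary gives $\zeta_{\tilde X_t}(s) = \zeta_{X_t}(s)^2\cdot\Pi(s)$ with $\Pi(s)$ a finite product non-vanishing at $s=h$, so the contradiction hypothesis would force a double pole of $\zeta_{\tilde X_t}$ at $s=h$, incompatible with the simple pole dictated by the transitivity of $\tilde X_t$.
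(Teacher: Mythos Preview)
Your argument is correct and takes a genuinely different route from the paper. The paper's proof is entirely local and elementary: it fixes one periodic orbit $\gamma$ with non-orientable $W^s(\gamma)$, takes a transverse homoclinic point $x\in W^u(\gamma)\cap W^s(\gamma)$, and for each $n$ builds the pseudo-orbit that follows $\gamma$ for $n$ periods and then follows the homoclinic excursion once; the Shadowing Lemma produces a periodic orbit $\omega_n$ near each such pseudo-orbit, and since the first return map along $\gamma$ reverses the transverse orientation of $\cF^s$, the orientability of $W^s(\omega_n)$ alternates with $n$, yielding infinitely many non-orientable ones. By contrast, your proof is global and counting-theoretic: you pass to the orientation double cover, translate the problem into the $2$-to-$1$ versus $1$-to-$1$ lifting dichotomy for periodic orbits, and derive a contradiction from the exponential growth rate of periodic orbits via Parry--Pollicott (or the pole structure of the dynamical zeta function). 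Your approach is more conceptual and immediately explains \emph{why} the non-orientable orbits must be abundant (they must carry half the entropy, so to speak), but it invokes substantially heavier machinery and, as you note, requires separate bookkeeping in the non-mixing case; the paper's shadowing argument is self-contained and uniform. One small remark: your zeta-function alternative does not really bypass the mixing hypothesis, since the simple-pole statement for $\zeta_{\tilde X_t}$ at $s=h$ is itself a Parry--Pollicott-type result requiring mixing; the honest case split (mixing versus constant-roof suspension, the latter handled by a direct count of odd-period orbits of the toral automorphism) is the cleanest way to close your argument.
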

  \begin{proof}
  Since $X_t$ is a non-coorientable Anosov flow, there exists a periodic orbit $\gamma$ of $X_t$
  so that $W^s (\gamma)$ is not orientable. Pick a point $P\in \gamma$ and a small disk section $\Sigma$
  of $X_t$ so that $P$ is in the interior of  $\Sigma$. Let $\Sigma_0$ be a smaller disk neighborhood of $P$ in $\Sigma$ so that,
  \begin{enumerate}
    \item $\Sigma_0$ is a  rectangle in the sense that $\cF^s \cap \Sigma_0$ and $\cF^u \cap \Sigma_0$ induce a $1$-dimensional product bi-foliation $(f^s, f^u)$ on $\Sigma_0$;
    \item  the first return map $R_1$ along the flowlines of $X_t$ satisfies that $R_1 (P)=P$ and $R_1 (\Sigma_0) \subset \Sigma$.
  \end{enumerate}

  Then, naturally  $R_1 (W_{\Sigma_0}^s (P)) \subset W_{\Sigma_0}^s (P)$ which is orientation reserving, where $W_{\Sigma_0}^s (P)$ is the interval leaf in $f^s$ containing $P$. Note that
  $W_{\Sigma_0}^s (P)$ exactly is the arc in $W^s (\gamma) \cap \Sigma_0$ which contains $P$.  We can  similarly define $W_{\Sigma_0}^u (P)$ so that $(R_1)^{-1} (W_{\Sigma_0}^u (P)) \subset W_{\Sigma_0}^u (P)$.

  Since $X_t$ is a transitive Anosov flow, there exists
a point $x\in W_{\Sigma_0}^u (P)$ so that the orbit of $x$, $\mbox{Orb} (x)\subseteq W^u (\gamma)\cap W^s (\gamma)$. Denote by $\tau$ the positive number so that $X_{\tau}(x) \in W_{\Sigma_0}^s (P)$ and $X_{(0,\tau)} (x) \cap W_{\Sigma_0}^s (P) =\emptyset$.
  Note that there are only finitely many points in the set $\mbox{Orb} (x) \cap (\Sigma_0 - W_{\Sigma_0}^s (P) \cup W_{\Sigma_0}^u (P))$. Therefore, we can choose $\Sigma_0$ small enough so that $X_{(0,\tau)} (x) \cap \Sigma_0 =\emptyset$. Below we assume that $X_{(0,\tau)} (x) \cap \Sigma_0 =\emptyset$.

By Shadowing lemma (see for instance \cite{PW}), there exists a point $Q_0\in \Sigma_0$ and a new periodic orbit
$\omega_0$ shadowed by the pseudo-orbit $X_{[0,\tau]} (x)$ so that $Q_0 \in \omega_0$.
Then we can choose a  rectangle neighborood $\Sigma_1$ of $Q_0$ in $\Sigma_0$
so that there is a product bi-foliation $(f_1^s, f_1^u)$ on $\Sigma_1$ induced by $(\cF^s\cap \Sigma_1, \cF^u \cap \Sigma_1)$ which satisfies that,
\begin{enumerate}
  \item  each leaf in $f_1^s$ is a leaf in $f^s$;
  \item  each leaf $l_1^u$ in $f_1^u$ satisfies that $R_2 (l_1^u)$ is a leaf in $f^u$,
  where $R_2$ is the first return map of $\Sigma_1$ to $\Sigma_0$.
\end{enumerate}
Notice that by Shadowing lemma, the periodic orbit $\omega_0$ and the pseudo-orbit  $X_{[0,\tau]} (x)$  are close enough.  $W^s (\omega_0)$ is orientable if and only if
$R_2$ is orientation preserving on $f_1^s$.

For every  $n\in \NN$, we define a pseudo-orbit $X_{[0,n\tau_0+\tau]} (x)$. Here $\tau_0 >0$ is the periodic time of $\gamma$, and we also assume that for every point $y\in \Sigma_0$, $X_{\tau_0} (y) = R_1 (y)$. This assumption works up to topological equivalence.
We remark that $X_{\tau} (x)= R_2 (x) \in W_{\Sigma_0}^s (P)$. By Shadowing lemma once more, there exists a point $Q_n\in \Sigma_0$ and a new periodic orbit
$\omega_n$ shadowed by the pseudo-orbit $X_{[0,n\tau_0+\tau]} (x)$ so that $Q_n \in \omega_n$.
Moreover, by the shadowing property and the facts $X_{\tau} (x)= R_2 (x)$ and $X_{\tau_0}=R_1$ on $\Sigma_0$, $W^s (\omega_n)$ is orientable if and only if $R_1^k \circ R_2$  is orientation preserving on $f_1^s$.
Recall that $R_1$ is orientation reserving on $f^s$, therefore, the orientations of $W^s (\omega_n)$ and
$W^s (\omega_{n+1})$ are different for every $n\in \NN$.
This implies that in the periodic orbit set $\{\omega_n| n\in \NN\}$, there are infinitely many
periodic orbits so that each of their weak stable manifolds is not orientable.
 \end{proof}

\section{The proof of Theorem \ref{t.main}}\label{s.proof}
In this section, we will prove  Theorem \ref{t.main} by discussing each branched surface in
Schwider's list.  For convenience, we will divide  the proof into $4$ classes, which
are associated to Section \ref{ss.Bd}, \ref{ss.BBII}, \ref{ss.BSI} and \ref{ss.BSII} respectively.

\subsection{Branched surfaces with disk compact leaves}\label{ss.Bd}
There are five types of branched surfaces in Schwider's list, so that each of their sectors in $V$
is a disk. They are $B_1$, $B_2$, $B_3$, $B_4$ and $B_9^M$. The purpose of this subsection is to show that none of them carries any Anosov lamination. More precisely, we have

\begin{proposition}\label{p.disktype}
There does not exist any Anosov flow $X_t$ on some $M(r)$, so that the stable foliation $\cF^s$ of $X_t$
is fully carried by one of $B_1$, $B_2$, $B_3$, $B_4$ and $B_9^M$.
\end{proposition}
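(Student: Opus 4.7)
The plan is to apply Proposition~\ref{p.Anobracomp} to each of the five branched surfaces $B \in \{B_1, B_2, B_3, B_4, B_9^M\}$ in turn. For each one, either I exhibit a connected component of $W(B) = M(r) - \mbox{int}(N(B))$ that fails to be any of the three allowed types (a solid torus with one or two annular vertical boundaries, or a three-ball), or I argue directly that any lamination fully carried by $B$ must contain a compact leaf, which contradicts the absence of compact leaves in any Anosov foliation on a closed three-manifold.

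The cleanest cases are $B_1$ and $B_3$. By Schwider's construction (Sections~\ref{sss.Q1} and~\ref{sss.Q3}), $B_1$ is identified with a smooth fiber torus in $M(0)$ and $B_3$ with a smooth embedded Klein bottle in $M(4)$; in particular, each has empty branch locus. Consequently, $N(B_i)$ is a (possibly twisted) $I$-bundle over a closed surface with empty vertical boundary, so any lamination fully carried by $B_i$ reduces to a horizontal foliation of $N(B_i)$ whose leaves are closed tori or Klein bottles. Since the weak stable foliation of an Anosov flow has no compact leaves, neither $B_1$ nor $B_3$ can fully carry $\cF^s$. (Equivalently, $\partial_v W(B_i) = \emptyset$ contradicts the nonempty annular vertical boundary required in every type of Proposition~\ref{p.Anobracomp}.)

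For $B_2$, $B_4$, and $B_9^M$ the branching is nontrivial because of the meridian disc sectors in $V$, and a direct topological analysis of $W(B)$ is required. The approach is to first identify $W(B)\cap N$ from the spine decomposition of $N$ together with Schwider's description of $B_i^n$ in Figures~\ref{f.B2NcapT}, \ref{f.B4NcapT}, and \ref{f.B9NcapT}. In particular, for $B_2$, whose underlying surface piece $B_2^n$ contains a once-punctured-torus Seifert surface of the figure-eight knot, the fibered structure of $N$ makes the relevant piece homeomorphic to a neighborhood of (once-punctured torus)$\,\times\,[0,1]$, a genus-two handlebody. The piece $W(B)\cap V$ is then identified via Brittenham's normal form (Theorem~\ref{t.norsoltor}) together with the fact that $B_i^v$ is a union of meridian discs, giving a union of three-balls. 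Gluing these along the annular strips of $T$ traced by $\partial B_i^n$, I expect to find that at least one component of $W(B)$ has boundary of genus at least two, and hence is neither a solid torus nor a three-ball. This will contradict Proposition~\ref{p.Anobracomp} and rule out those branched surfaces as well.

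The main obstacle is the case-by-case topological computation of $W(B_2)$, $W(B_4)$, and $W(B_9^M)$: one must track the spine combinatorics (the edges $a,b,c,d$ and the connector types) and the slope-$r$ meridian disc attachment in $V$ simultaneously to verify that the pasted complement has a high-genus boundary component rather than collapsing to a union of solid tori and balls after the disc attachments. By contrast, the cases of $B_1$ and $B_3$ are handled immediately by the compact-leaf argument, since these branched surfaces are smooth closed surfaces.
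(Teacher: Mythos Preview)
Your overall strategy matches the paper's: compute the topology of $W(B)$ and invoke Proposition~\ref{p.Anobracomp}. For $B_1$ and $B_3$ your compact-leaf argument is a clean shortcut; the paper instead records $W(B_1)\cong T^2\times[0,1]$ and $\partial_h W(B_3)\cong T^2$, both with $\partial_v=\emptyset$, and applies Proposition~\ref{p.Anobracomp} uniformly. Either route is fine.

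The gap is in the $B_2$, $B_4$, $B_9^M$ cases, where you have only a plan, and your sketch for $B_2$ contains an error. You treat $B_2^n$ as if it were just the fiber surface $B_1^n$, concluding that $N\setminus\mbox{int}(N(B_2^n))$ is (punctured torus)$\times[0,1]$, a genus-two handlebody. But $B_2^n$ strictly contains $B_1^n$ together with extra branching coming from the additional connectors in $Q_2$, so the complement is smaller. The paper (Lemma~\ref{l.B2B4}) shows that $N\setminus\mbox{int}(N(B_2^n))$ deformation retracts onto the circle $c\cup d$ in the spine and is therefore a \emph{solid torus}; the genus-two handlebody appears only after gluing this solid torus to the $3$-ball $V\setminus\mbox{int}(N(B_2^v))$ along the two disk components of $T\setminus B_2$. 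The same scheme handles $B_4$ (again genus two) and $B_9^M$ (Lemma~\ref{l.B9M}: $N\setminus\mbox{int}(N((B_9^M)^n))$ retracts to the single edge $c$, hence is a $3$-ball, and $T\setminus B_9^M$ has five disk components, giving a genus-four handlebody). So your expected conclusion---a boundary component of genus at least two---is correct, but the computation hinges on reading off which spine edges survive in the complement rather than on the fibration of $N$.
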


The idea to prove Proposition \ref{p.disktype} is quite simple: by understanding the topology and
the vertical/horozontal boundary of the  manifold $W(B) = M(r) - \mbox{int} (N(B))$, where $B$ is anyone
of the five types of branched surfaces above, we can  always find an obstruction to fully carry Anosov foliation. Let us by building some lemmas to describe some properties of $W(B)$.

\begin{lemma}\label{l.B1}
$W(B_1)$ is homeomorphic to $T^2 \times [0,1]$, $\partial_h W(B_1) =T^2 \times \{0,1\}$
and $\partial_v W(B_1)=\emptyset$.
\end{lemma}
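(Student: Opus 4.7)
The plan is to unwind the explicit description of $B_1$ from Section \ref{sss.Q1} and observe that everything reduces to the trivial fibered situation on $M(0)$.

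First, I would recall that by Schwider's analysis $B_1^n = B_1 \cap N$ is a fibered once-punctured torus in the figure-eight knot complement $N$, and that in this case there are no annulus plaques in $V$, so by Theorem \ref{t.norsoltor} the only way to extend $B_1^n$ across $V$ is to cap off the boundary slope-$0$ curve of $\partial B_1^n$ by a meridian disk of $V$. Since a once-punctured torus capped off by a disk along its boundary circle is a torus, the resulting branched surface $B_1$ is in fact an honestly embedded $2$-torus in $M(0)$, namely the fiber torus of the torus-bundle structure $M(0) = T^2 \times [0,1]/(x,1)\sim (A(x),0)$. In particular, $B_1$ has empty branch locus.

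Next, since the branch locus of $B_1$ is empty, a regular neighborhood $N(B_1)$ is just a trivial $I$-bundle over the torus, $N(B_1) \cong T^2 \times [-\epsilon,\epsilon]$, and therefore $\partial_v N(B_1) = \pi^{-1}(\text{branch locus}) = \emptyset$, so $\partial_v W(B_1) = \emptyset$ and $\partial_h W(B_1) = \partial W(B_1)$ consists of two tori.

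Finally, I would identify $W(B_1) = M(0) - \mathrm{int}(N(B_1))$ with the mapping torus of $A$ cut open along a fiber. Cutting the mapping torus $T^2 \times [0,1]/(x,1)\sim (A(x),0)$ along the fiber $T^2 \times \{0\}$ just returns the product $T^2 \times [0,1]$, and removing the open collar $N(B_1)$ from this product does not change the homeomorphism type. Hence $W(B_1) \cong T^2 \times [0,1]$ and the two boundary tori are precisely $\partial_h W(B_1) = T^2 \times \{0,1\}$, as claimed. No step looks hard here; the only thing to be careful about is to make the identification between $B_1$ and the fiber torus of $M(0)$ explicit, which is immediate from the description of $B_1^n$ as the fibered once-punctured torus and of the meridian disk filling.
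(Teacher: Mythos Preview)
Your proposal is correct and follows essentially the same approach as the paper: identify $B_1$ with the fiber torus of the sol-manifold $M(0)$, observe that the branch locus is empty so $\partial_v W(B_1)=\emptyset$, and conclude that the complement is $T^2\times[0,1]$. The paper's proof is just a terser version of what you wrote, citing Section~\ref{sss.Q1} directly rather than re-deriving that $B_1$ is the fiber torus.
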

\begin{proof}
See Section \ref{sss.Q1}, $B_1$ is homeomorphic to a fiber torus on the  sol manifold $M(0)$. Therefore, $W(B_1)$ is homeomorphic to $T^2 \times [0,1]$. Since there does not exist any branch
locus, $\partial_h W(B_1) =T^2 \times \{0,1\}$ and $\partial_v W(B_1)=\emptyset$.
\end{proof}

\begin{lemma}\label{l.B2B4}
Each of $W(B_2)$ and $W(B_4)$ is homeomorphic to a genus two handlebody.
\end{lemma}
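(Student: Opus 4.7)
The plan is to compute each $W(B_i)$ by splitting $M(r) = N \cup V$ along the torus $T = \partial N$ and analyzing the two pieces $W(B_i) \cap V$ and $W(B_i) \cap N$ separately. On the $V$ side, $B_i \cap V$ is a single meridian disk $D$ of $V$ (with $\partial D$ the slope-$r$ loop on $T$ that bounds a disk in $V$), so $V \setminus \mathrm{int}(N(D))$ is a $3$-ball, and its boundary inherits a natural decomposition into horizontal pieces (the two parallel copies of $D$) and a vertical annulus running along $\partial D$. On the $N$ side, the branched surface $B_i^n$ is a twisted $I$-bundle over $Q_i = B_i \cap \sigma$ (with $Q_2$, $Q_4$ described in Sections 3.1.2 and 3.1.4), so $N \setminus \mathrm{int}(N(B_i^n))$ can be read off directly from the combinatorial position of $Q_i$ inside the hexagons $X$, $Y$ of the spine $\sigma$ together with the gluing prescribed by the spine decomposition $N = T^2 \times [0,1]/\tau$.

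Once both pieces are in hand, the remaining step is to reassemble them along the annular region of $T$ not covered by $N(B_i) \cap T$ and verify that the outcome is a genus-$2$ handlebody. I would carry this out by exhibiting a spine: a deformation retract onto a wedge of two circles, or equivalently a pair of disjoint, non-parallel, properly embedded compressing disks in $W(B_i)$ whose complement is a $3$-ball. An Euler-characteristic check guides the computation throughout: from $\chi(M(r)) = 0$ and $\chi(\partial N(B)) = 2\chi(B)$ one obtains $\chi(W(B)) = \chi(B)$, so the handlebody conclusion forces $\chi(B_i) = -1$, which can be confirmed from the sector count of $B_i$.

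The main obstacle is combinatorial bookkeeping. I need to track every branch circle (both the interior branching in $B_i^n$ coming from $Q_i$, and the new branch circle introduced by attaching $B_i^v$ along $\partial D \subset T$) and see how these partition $\partial W(B_i)$ into horizontal and vertical pieces; getting this right is exactly what distinguishes a handlebody from a compression body or a reducible manifold. The two cases $i = 2$ and $i = 4$ have superficially similar train tracks $\partial B_i^n$ on $T$ but carry loops of different slopes and have different sets of background manifolds, so the two analyses, though parallel in structure, must be executed separately rather than by a single uniform argument.
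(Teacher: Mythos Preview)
Your overall strategy---decompose $W(B_i)$ along $T$ into the $V$-side and the $N$-side, identify each piece, then reglue---is exactly the paper's approach. The $V$-side analysis is also correct: $V\setminus\mathrm{int}(N(B_i^v))$ is a $3$-ball.

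There is, however, one concrete error and one missing computation. First, the region of $T$ along which the two pieces reglue is \emph{not} an annulus: the complement of a regular neighborhood of the train track $\partial B_i^n$ in $T$ consists of two open disks $D_1, D_2$ (you can read this off directly from the pictures of $B_2^n\cap T$ and $B_4^n\cap T$). Second, you defer the identification of $N\setminus\mathrm{int}(N(B_i^n))$, but this is the heart of the argument. The paper observes that since $Q_2$ misses the edges $c$ and $d$ of the spine $\sigma$ (and $Q_4$ similarly misses the relevant pair of edges), the complement $N\setminus\mathrm{int}(N(B_i^n))$ deformation-retracts onto the circle $c\cup d$, hence is a solid torus. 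With those two facts in hand the conclusion is immediate: a solid torus and a $3$-ball glued along two disjoint boundary disks is a genus-two handlebody. Your proposed verification via spines, compressing disks, or Euler characteristic is unnecessary once the pieces and the gluing pattern are pinned down correctly.
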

\begin{proof}
Recall that $B_2$ is the union of $B_2^n$ and $B_2^v$ where $B_2^n= B_2 \cap N$ and $B_2^v =B_2 \cap V$.
$W(B_2)= (N- \mbox{int} (N(B_2)))\cup (V- \mbox{int} (N(B_2)))= (N- \mbox{int} (N(B_2^n)))\cup (V- \mbox{int} (N(B_2^v)))$.
See the left part of Figure \ref{f.5B2}, $N- \mbox{int} (N(B_2^n))$ is a compact $3$-manifold with one boundary connected component
so that the circle $c\cup d$ is one of its deformation retracts. Therefore, $N- \mbox{int} (N(B_2^n)))$ is homeomorphic to a solid torus $V_0$.
\begin{figure}[htp]
\begin{center}
  \includegraphics[totalheight=3.2cm]{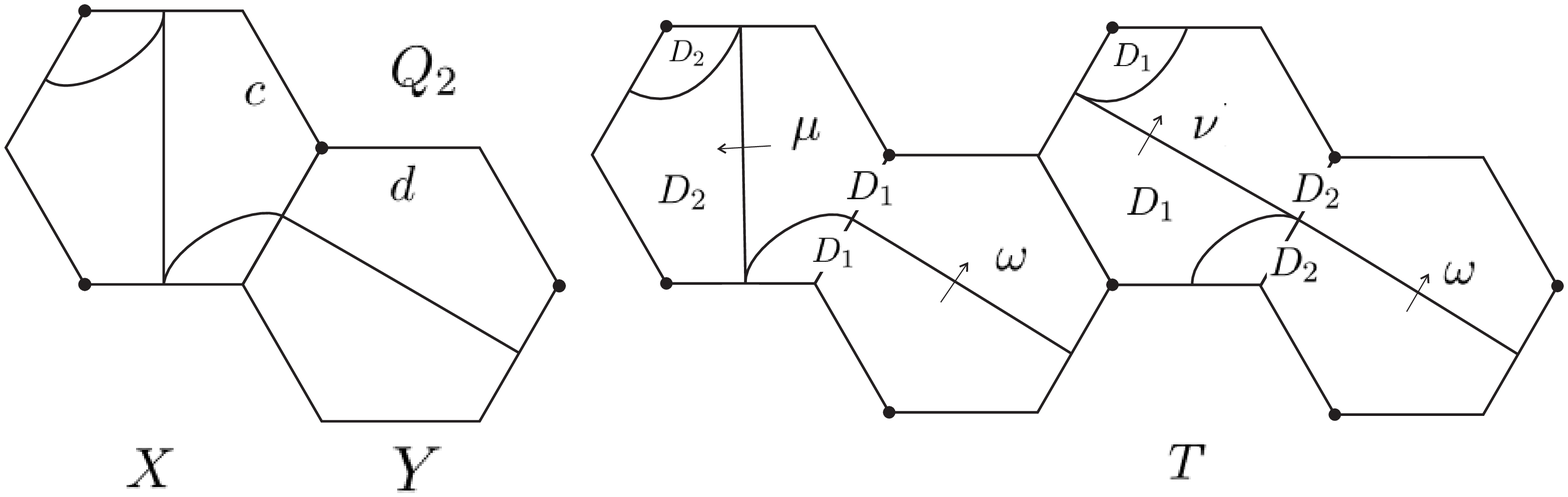}\\
 \caption{figures for the proof of Lemma \ref{l.B2B4} in the case of $B_2$}\label{f.5B2}
\end{center}
\end{figure}

Recall that for each essential lamination $\lambda$ fully carried by $B_2$, $B_2 \cap \partial N$ only
carry $(\pm w, \pm u \mp v)$-circles. Therefore, $B_2^v$ is an immersed meridian disk $D'$ so that the boundary circle is carried by the train track $B_2 \cap \partial N$. Hence, $V-B_2^v$ is an open $3$-ball.
These facts induce that the interior of $V- \mbox{int} (N(B_2^v))$ is also an open $3$-ball $B^3$ so that
$V- \mbox{int} (N(B_2^v))$  meets $N- \mbox{int} (N(B_2^n))$ at the union of two open disks $D_1$ and $D_2$ on $T$. See the right part of Figure \ref{f.5B2} for $D_1$ and $D_2$ on $T$.

Therefore, $W(B_2)= M(r)- \mbox{int}(N(B_2)) = (N- \mbox{int} (N(B_2^n)))\cup (V- \mbox{int} (N(B_2^v)))$ is equivalent to
the manifold obtained by gluing the solid torus $V_0$ and the $3$-ball $B^3$ along two disks $D_1$ and $D_2$, which is homeomorphic to a genus two handlebody.
\begin{figure}[htp]
\begin{center}
  \includegraphics[totalheight=3.2cm]{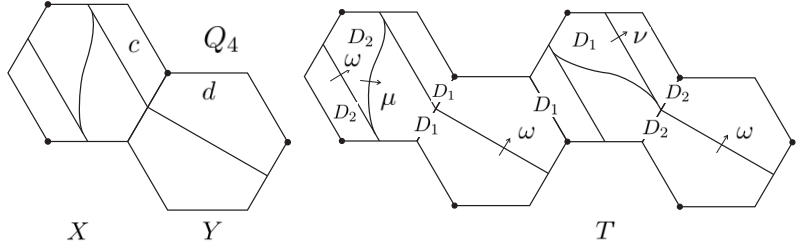}\\
 \caption{figures for the proof of Lemma \ref{l.B2B4} in the case of $B_4$}\label{f.5B4}
\end{center}
\end{figure}

See Figure \ref{f.5B4} for the case of $B_4$, one can similarly prove the lemma in the case of $B_4$,
we leave the details to an interested reader.
\end{proof}


\begin{lemma}\label{l.B3}
$B_3$ is homeomorphic to a Klein bottle so that $\partial_h W(B_3)\cong T^2$ and $\partial_v W(B_3)= \emptyset$.
\end{lemma}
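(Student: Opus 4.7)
The plan is to read off all three assertions directly from the structure of $B_3$ established in Section \ref{sss.Q3}, together with the standard fact that a regular neighborhood of an embedded closed non-orientable surface in an orientable $3$-manifold is a twisted $I$-bundle.

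First, recall from Section \ref{sss.Q3} that in the case of $Q_3$ there are no annular plaques in $V$, and gluing the unique disc plaque to $B_3^n$ (a once-punctured Klein bottle) caps off the single boundary circle of $\partial B_3^n$, yielding a closed surface $B_3$ homeomorphic to a Klein bottle. In particular $B_3$ has empty branch locus: it is an honest embedded surface rather than a branched surface with nontrivial branching. This immediately gives the first assertion.

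Next, since the branch locus of $B_3$ is empty and $\pi(\partial_v N(B_3))$ equals the branch locus, the vertical boundary $\partial_v N(B_3)$ must be empty. Because $\partial_v W(B_3) = \partial_v N(B_3)$, I conclude $\partial_v W(B_3) = \emptyset$, which is the third assertion.

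Finally, for the horizontal boundary I use that $M(4)$ is orientable while $B_3$ is a Klein bottle, hence non-orientable. A regular neighborhood $N(B_3)$ in such an $M$ is then necessarily a twisted $I$-bundle over the Klein bottle, so $\partial N(B_3)$ is the orientation double cover of the Klein bottle, i.e.\ a torus $T^2$. Since $\partial_v N(B_3) = \emptyset$, we have $\partial_h N(B_3) = \partial N(B_3) \cong T^2$, and therefore $\partial_h W(B_3) = \partial_h N(B_3) \cong T^2$, which is the second assertion. There is no real obstacle here; the statement is essentially a bookkeeping corollary of the classification in Section \ref{sss.Q3}, and the only ingredient worth being careful about is invoking orientability of $M(4)$ to conclude that $N(B_3)$ is the twisted $I$-bundle (rather than a trivial one) so that its boundary is a single torus rather than two Klein bottles.
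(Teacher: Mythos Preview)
Your proof is correct and follows essentially the same approach as the paper's: both identify $B_3$ as a Klein bottle from Section~\ref{sss.Q3}, use orientability of the ambient manifold to conclude that $N(B_3)$ is a twisted $I$-bundle, and read off $\partial_h W(B_3)\cong T^2$ and $\partial_v W(B_3)=\emptyset$. Your version is in fact slightly more careful in justifying $\partial_v W(B_3)=\emptyset$ via the empty branch locus.
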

\begin{proof}
See Section \ref{sss.Q1}, $B_3$ is homeomorphic to a Klein bottle. Further note that $M(r)$ is orientable,
so, as a small tubular neighborhood of $B_3$, $N(B_3)$ can be endowed with a twisted I-bundle
over $B_3$. Therefore, $\partial_h W(B_3)= \partial_h N(B_3)$ is homeomorphic to $T^2$ and $\partial_v W(B_3)= \emptyset$.
\end{proof}

\begin{lemma}\label{l.B9M}
$W(B_9^M)$ is homeomorphic to a genus four handlebody.
\end{lemma}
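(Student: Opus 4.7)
The plan is to mimic the strategy used in Lemma \ref{l.B2B4}. Decompose
$$W(B_9^M) \;=\; \bigl(N - \mathrm{int}(N(B_9^n))\bigr) \,\cup\, \bigl(V - \mathrm{int}(N(B_9^v))\bigr),$$
and analyze each piece separately before understanding the gluing along $T=\partial V$.

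First I would handle the $V$-side, which is the easy piece. In the defining case of $B_9^M$ (see Section \ref{ss.Q9}), an essential lamination fully carried by $B_9^M$ meets $V$ only in meridian disks, so $B_9^v$ is a meridian disk of $V$. Hence $V-\mathrm{int}(N(B_9^v))$ is an open $3$-ball whose closure meets $T$ in a pair of disks $D_1, D_2$, exactly as in the proof of Lemma \ref{l.B2B4} for $B_2^v$.

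Second, I would analyze the $N$-side. Here $B_9^n$ is a (singular) twisted $I$-bundle over $Q_9$, so $N-\mathrm{int}(N(B_9^n))$ can be examined by taking the spine decomposition of $N$, removing a regular neighborhood of $B_9^n$ from each hexagonal cell, and deformation retracting the resulting complement onto a graph built out of the edges of $\sigma$ that survive in $\partial(N(B_9^n))^c$ (analogous to how $c\cup d$ played the role of a retract in Lemma \ref{l.B2B4}). The edges $e,f,g,h,i,j$ of Figure \ref{f.B9NcapT} prescribe which loops survive and which disks are cut, so a direct read-off should identify $N-\mathrm{int}(N(B_9^n))$ as a handlebody of genus $g_1$ for an explicit $g_1$.

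Finally, I would assemble the two pieces. The handlebody $N-\mathrm{int}(N(B_9^n))$ of genus $g_1$ is glued to the $3$-ball $V-\mathrm{int}(N(B_9^v))$ along the $k$ disks on $T$ that are complementary to the traces of $B_9^n$ and $B_9^v$. Gluing a $3$-ball to a genus-$g_1$ handlebody along $k$ disjoint disks on their boundaries yields a handlebody of genus $g_1 + k - 1$ (as can be seen by an Euler characteristic count or by explicit handle attachment), and the combinatorics arising from Figure \ref{f.B9NcapT} should give $g_1 + k - 1 = 4$. An Euler characteristic sanity check via $\chi(\partial W(B_9^M)) = -6$ confirms the target genus.

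The main obstacle will be the bookkeeping in the second and third steps: reading off the correct genus $g_1$ of the $N$-side complement from the spine picture, and counting the disks $D_i$ of intersection on $T$ precisely so that $g_1+k-1=4$. Because each of these numbers depends on the specific connector pattern of $Q_9$ rather than on any additional dynamical input, this should be a purely combinatorial verification parallel to, but slightly more intricate than, the one already carried out for $B_2$ and $B_4$.
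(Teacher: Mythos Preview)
Your overall strategy is exactly the paper's: decompose $W(B_9^M)$ along $T$, identify the $N$-piece and the $V$-piece, and glue along the complementary disks on $T$. However, two of your concrete claims are off.

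First, the $V$-piece does \emph{not} meet the $N$-piece in ``a pair of disks''. The gluing region is $T$ minus a neighborhood of the full train track $\partial B_9^n$ (Figure~\ref{f.B9NcapT}), not $T$ minus the single boundary curve of the meridian disk $B_9^v$. In the $B_2$ case those happened to coincide in count, but here the train track $\partial B_9^n$ has six branches $e,f,g,h,i,j$ and its complement in $T$ consists of \emph{five} open disks. So $k=5$, not $2$.

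Second, on the $N$-side the complement $N-\mathrm{int}(N(B_9^n))$ deformation retracts onto the single spine edge $c$ (the only edge of $\sigma$ missed by $Q_9$), which is an arc joining the two vertices $P_1,P_2$ and hence contractible. Thus the $N$-piece is a $3$-ball, i.e.\ $g_1=0$, not a positive-genus handlebody as your wording suggests. (Contrast with $B_2$, where the retract $c\cup d$ is a circle and the $N$-piece is a solid torus.)

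With the correct values $g_1=0$ and $k=5$, your formula $g_1+k-1=4$ gives the right answer, and the argument is then identical to the paper's: two $3$-balls glued along five boundary disks form a genus-four handlebody.
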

\begin{proof}
The proof is similar to the proof of Lemma \ref{l.B2B4}, so, we will omit some details.
At first, see the left part of Figure \ref{f.5B9M}, $N- \mbox{int} (N(B_9^M)))$ is a compact $3$-manifold with one boundary connected component
so that the edge $c$ is one of its deformation retracts. Therefore, $N- \mbox{int} (N(B_9^M))$ is homeomorphic to a $3$-ball. Furthermore, by the same argument to the proof of
Lemma \ref{l.B2B4}, the interior of $V-(B_9^M)$ is an open $3$-ball.
Secondly, see the right part of Figure \ref{f.5B9M}, one can observe that $T-B_9^M$ is the union of five open disks $D_1, \dots, D_5$.
\begin{figure}[htp]
\begin{center}
  \includegraphics[totalheight=3.2cm]{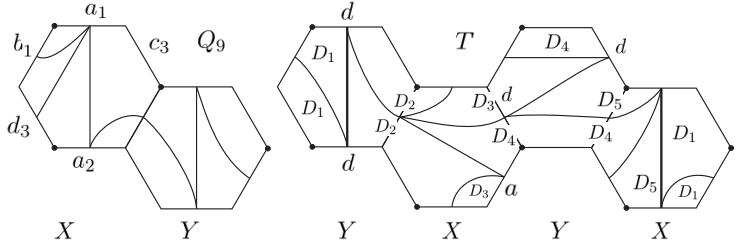}\\
 \caption{figures for the proof of Lemma \ref{l.B9M}}\label{f.5B9M}
\end{center}
\end{figure}

Therefore, $M-B_9^M$ is homeomorphic to the manifold obtained by gluing two $3$-balls
along $5$ boundary disks, which is homeomorphic to a genus four handlebody.
\end{proof}

Now we can finish the proof of Proposition \ref{p.disktype}.

\begin{proof}[The proof of Proposition \ref{p.disktype}]
By Lemma \ref{l.B1}, \ref{l.B2B4},  \ref{l.B3} and \ref{l.B9M},
for each branched surface $B$ in $B_1$, $B_2$, $B_3$, $B_4$ and $B_9^M$,
$W(B)$ is connected and is not in the three types of manifolds listed in
Proposition \ref{p.Anobracomp}. Therefore, $B$ does not carry any Anosov lamination. The conclusion of the proposition is followed.
\end{proof}

\subsection{Basic branched surfaces with type II annular sector} \label{ss.BBII}
In Section \ref{s.Schwiderlist}, we have introduced five types of branched surfaces in Schwider's list, so that
each of them contains some type II annular sectors, but does not contain any type I annular sector,
and also does not exist any two  type II annular sectors which can be abstractly obtained by splitting a type II sector of another branched surface. They are $B_6$, $B_7$, $R_7$, $B_8$ and $B_9$. We will discuss the Anosov laminations carried by these branched surfaces.
Now we need to describe them in two cases which correspond to the following two propositions.

\begin{proposition}\label{p.typeII1}
Let $B$ be one of $B_6$, $B_7$, $B_8$ and $B_9$.
If some $M(r)$ admits an Anosov flow $X_t$ so that the stable foliation $\cF^s$ of $X_t$
is fully carried by $B$, then $r\in \ZZ$, and there exists a periodic orbit $\omega$ of $X_t$ so that $\omega$ is isotopic to a core $c(V)$ of $V$ in $M(r)$.
\end{proposition}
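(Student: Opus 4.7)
The plan is to produce a periodic orbit $\omega$ of $X_t$ that is isotopic in $M(r)$ to the core $c(V)$ of the surgery solid torus; Lemma \ref{l.knotint} will then deliver both conclusions of the proposition (namely $r\in\ZZ$ and the existence of the required periodic orbit) simultaneously. The decisive structural input is Corollary \ref{c.B6789nocarry}: because $B\in\{B_6,B_7,B_8,B_9\}$ is transversely orientable, the Anosov lamination, and hence both $\cF^s$ and $\cF^u$, is transversely orientable, so the Anosov flow $X_t$ is coorientable. This coorientability, combined with Fenley's dichotomy in Theorem \ref{t.fenley}, is what will force the homotopy class of $\omega$ to be primitive in $\pi_1(V)$.

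To produce $\omega$, I would exploit the annular structure in $V$. Each $B$ in the list contains a type II annular sector $A\subset V$. Since $\cF^s$ is fully carried by $B$ via the $I$-bundle projection $N(B)\to B$, every leaf of $\cF^s$ projecting onto the compact annulus $A$ is itself a compact annulus; and for an Anosov stable foliation a compact annular leaf must be the weak stable manifold $W^s(\omega)$ of a periodic orbit $\omega$ with orientable stable bundle. The orbit $\omega$ is then a core circle of this annular leaf lying inside $V$, so it is freely homotopic in $V$ to either boundary circle of the leaf, which is a slope-$s$ simple closed curve on $T$ for the slope $s$ carried by the edges of $B\cap T$ bounding $A$. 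Writing $[\omega]=k\cdot [c(V)]$ in $\pi_1(V)=\ZZ$, the integer $k$ is (up to sign) the intersection number of a slope-$s$ curve with the meridian $pl+qm$ of $V$, and $k\neq 0$ because a periodic orbit of an Anosov flow is never null-homotopic.

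Combining the two ingredients completes the proof. In $M(r)$ we have the free homotopy $\omega\simeq c(V)^k$, so Theorem \ref{t.fenley} applied with $a=c(V)$ forces $|k|\leq 2$; and the case $|k|=2$ would, by the same theorem, require one of $\cF^s,\cF^u$ to fail to be transversely orientable, contradicting coorientability. Hence $|k|=1$, so $[\omega]$ is a generator of $\pi_1(V)$. A simple closed curve in a solid torus representing a generator of $\pi_1$ is isotopic to the core of the solid torus, so $\omega$ is isotopic to $c(V)$ in $V$, and \emph{a fortiori} in $M(r)$. Lemma \ref{l.knotint} now yields $r\in\ZZ$, with $\omega$ serving as the periodic orbit demanded by the statement.

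The hardest step, I expect, will be the middle one: verifying case by case that each of the four branched surfaces $B_6, B_7, B_8, B_9$ indeed forces $\cF^s$ to possess a compact annular leaf projecting onto the type II sector $A$, and that the slope $s$ of $\partial A$ on $T$ is not the meridional slope $q/p$ of $V$ (so that the class $k$ is genuinely nonzero). Both facts should follow from Schwider's explicit description of $B\cap V$ together with the normal form Theorem \ref{t.norsoltor}, which forces the annular leaves in $V$ to be vertical in a standard Seifert fibering of $V$ and thereby pins down the slope $s$ concretely; after that structural check, the remainder is a uniform application of Fenley's theorem, coorientability, and Lemma \ref{l.knotint}.
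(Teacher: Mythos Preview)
Your overall strategy matches the paper's: locate a periodic orbit $\omega$ via the type~II annular sector, use Fenley's Theorem~\ref{t.fenley} to bound its homotopy class against $c(V)$, and invoke transverse orientability (Corollary~\ref{c.B6789nocarry}) to exclude the doubled case. But the middle step, as you have written it, contains a genuine error rather than merely a hard verification.

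You claim that a leaf of $\cF^s$ projecting onto the annular sector $A$ is itself a compact annulus, and that ``for an Anosov stable foliation a compact annular leaf must be $W^s(\omega)$.'' This is false: an Anosov stable foliation has no compact leaves whatsoever --- every leaf is an open annulus, an open M\"obius band, or a plane. What the branched-surface structure actually gives is only that some leaf $l$ of the carried lamination contains a compact annular \emph{plaque} lying over $A$; the leaf itself extends into $N$ and is noncompact. Consequently your assertion that $\omega$ ``is a core circle of this annular leaf lying inside $V$'' and your computation of $[\omega]\in\pi_1(V)=\ZZ$ are unjustified: $\omega$ is the core of the noncompact leaf and need not sit in $V$ at all, so the sentence ``a simple closed curve in a solid torus representing a generator of $\pi_1$ is isotopic to the core'' does not apply to $\omega$.

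The repair is to argue with the core $\alpha$ of the annular plaque (which genuinely lies in $V$, as a slope-$\infty$ curve on $T$) rather than with $\omega$ directly. Since the slope-$\infty$ curve is nontrivial in $\pi_1(M(r))$ and leaves of an essential lamination are $\pi_1$-injective, $\alpha$ is essential in the leaf; hence the leaf is an open annulus or M\"obius band and contains a periodic orbit $\omega$ freely homotopic --- indeed isotopic within the leaf --- to $\alpha$. Now $[\alpha]=\pm p\cdot[c(V)]$ in $\pi_1(V)$, so $\omega\simeq c(V)^{\pm p}$ in $M(r)$, and Fenley plus coorientability force $p=1$, i.e.\ $r\in\ZZ$; isotopy of $\omega$ to $c(V)$ then follows. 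This is exactly the paper's route. Note also that your passage from ``the carried lamination is transversely orientable'' to ``$\cF^s,\cF^u$ are transversely orientable'' is not automatic: splitting a single M\"obius-band leaf produces an annulus, so a non-coorientable $\cF^s$ could in principle yield a transversely orientable split lamination. The paper closes this gap with Lemma~\ref{l.infper}, which guarantees infinitely many non-orientable leaves and hence survival of at least one after any finite splitting.
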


\begin{proposition}\label{p.typeII2}
There does not exist any $M(r)$ which carries an Anosov flow $X_t$ so that
the Anosov foliation is fully carried by $R_7$.
\end{proposition}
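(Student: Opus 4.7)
The plan is to derive a contradiction by locating a component of $W(R_7)=M(r)-\mathrm{int}(N(R_7))$ that cannot conform to any of the three models allowed by Proposition \ref{p.Anobracomp}. I will assume for contradiction that some $M(r)$ carries an Anosov flow whose Anosov foliation is fully carried by $R_7$, and split finitely many leaves to produce an Anosov lamination $\cL$ fully carried by $R_7$.

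Recall from Section \ref{ss.Q7} that $R_7\cap V$ consists of three type II annular sectors $A_{fg},A_{gh},A_{fh}$, pairwise sharing their boundary circles on the slope-$\infty$ branch curves $f,g,h\subset T$. The first step is to observe that these sectors force $\cL\cap V$ to contain vertical annular leaves, which rules out the meridian-disc alternative of Theorem \ref{t.norsoltor}; the theorem then produces a standard Seifert fibration of $V$ in which the three annuli are vertical, with regular fibers of slope $\infty$ on $\partial V$. Projecting to the base orbifold (a disk $D$ with at most one cone point at the core), the three annuli become three pairwise disjoint arcs from $\bar f,\bar g,\bar h\in\partial D$ meeting only at these three boundary points, and so they form a triangle dividing $D$ into an inner triangle $\Delta$ and three outer caps.

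Setting $V_I=\pi^{-1}(\Delta)$ gives a Seifert-fibered solid torus which meets $\partial V$ only along $f,g,h$; the corresponding component $V_I^*\subset W(R_7)$ thus lies entirely inside $V$ and is again a solid torus. The key local step is to show that at each $x\in\{f,g,h\}$ the two $V$-side sheets are tangential continuations of the single $N$-side sheet $R_7^n$, hence mutually tangent at $x$, so the unique vertical-boundary annulus of $\partial N(R_7)$ at $x$ occupies the thin wedge where these two sheets diverge --- which is precisely the wedge lifting to $V_I$. It then follows that $\partial_v V_I^*$ has three disjoint annular components, one at each of $f,g,h$.

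This will produce the desired contradiction, because the three models permitted by Proposition \ref{p.Anobracomp} have respectively $2$, $1$ and $1$ vertical-boundary annuli, none of them a solid-torus component with three. Hence $R_7$ cannot fully carry any Anosov lamination. The step I expect to be most delicate is the local tangency claim controlling the orientation of the cleft at each branch circle: it reduces to analyzing the T-branch structure at a type II branch circle, where the two $V$-side annuli must be joint tangent continuations of the single $N$-side sheet and so necessarily diverge only along the thin wedge lifting to $V_I$.
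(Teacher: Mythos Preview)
Your approach is correct and coincides with the paper's: both locate a solid-torus component of $W(R_7)$ (your $V_I^*$, the paper's $V_0$) whose vertical boundary consists of three slope-$\infty$ annuli, and then invoke Proposition~\ref{p.Anobracomp}. The paper asserts this in one line without your Seifert-fibration detour through Theorem~\ref{t.norsoltor}; one small correction is that $R_7\cap V$ also contains the disk sectors inherited from $B_7$, but these lie in the outer caps and never meet $V_I$, so your conclusion is unaffected.
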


\begin{proof}[The proof of Proposition \ref{p.typeII1}]
By Section \ref{ss.Q6}, \ref{ss.Q7}, \ref{ss.Q8} and \ref{ss.Q9}, one can easily
observe that $B$ contains an  annular sector $\Sigma$ so that each of its two end-loops on $T$ is slope-$\infty$. We remark that the proof will  strongly depends on this observation.

Assume that $\cL$ is an Anosov lamination on $M(r)$ ($r=\frac{q}{p}$) so that,
 \begin{enumerate}
   \item $\cL$ is obtained by splitting finitely many leaves of $\cF^s$;
   \item $\cL$ if fully carried by $B$.
 \end{enumerate}
Since $\cL$ is fully carried by $B$, then there exists a  leaf $l$ of $\cL$
which contains an annular plaque carried by the glued annular sector $\Sigma$ of $B$.
Let $l^s$ be the leaf in $\cF^s$ so that $l$ is a splitting connected component of $l^s$, and $\omega$ be the periodic orbit of $X_t$ so that $\omega \subset l^s$.
 Note that
the boundary of  $N\cup \Sigma$ is the union of two tori.
Here, we think that $\Sigma$ provides two annuli in the boundary of $N\cup \Sigma$ by its two sides.
In the manifold $M(r)$, at most one  boundary torus  bounds an \emph{exceptional solid torus}
$V_0$, i.e. a core of $V_0$, $c(V_0)$, and a core of $\Sigma$, $c(\Sigma)$, are not isotopic in $V$.\footnote{We remark that exceptional solid torus will be used several times below. Moreover, under the same assumptions and notations, if $c(V_0)$ and $c(\Sigma)$ are isotopic in $V$, we say that $V_0$ is a \emph{regular solid torus}.}
 Recall that $r=\frac{q}{p}$, it is easy to compute that $c(\Sigma)$ is homotopic to
either $c(V_0)^p$ or $c(V_0)^{-p}$ in $V$.
Note that a small tubular neighborhood of $\omega$, $W_{loc}^s (\omega)$, can be tamely embedded into $l$, and $l$ contains an annulus plaque carried by  $\Sigma$, then
$\omega$ is homotopic to either $c(\Sigma)$ or $c(\Sigma)^{-1}$.
Therefore, $\omega$ is homotopic to either $c(V_0)^p$ or $c(V_0)^{-p}$.
By  a classical result due to Fenley \cite{Fen2}, Theorem \ref{t.fenley},
 $\omega$ is either primitive in $\pi_1 (M(r))$
or $\omega$ is homotopic to $\alpha^2$ where
$\alpha$ is a primitive element in $\pi_1 (M(r))$, and moreover, in the later case, $X_t$ is non-coorientable.
Then either $p=1$, or $p=2$ and $X_t$ is non-coorientable.
Further by Lemma \ref{l.infper} which implies that $\cF^s$ contains infinitely many non-orientable leaves and the fact that $\cL$ can be obtained by splitting finitely many leaves of $\cF^s$,
$\cL$ must be not transversely orientable. Recall that $\cL$ is fully carried by $B$. Then by Corollary \ref{c.B6789nocarry}, $\cL$ is transversely orientable. We get a contradiction, which means that $r\in \ZZ$.

When $r\in \ZZ$, each of the two  boundary tori of  $N\cup \Sigma$ bounds a regular solid torus in $M(r)$.
Further notice that the annular sector $\Sigma$ satisfies that each of its two end-loops on $T$ is slope-$\infty$. These two facts imply that
a core $c(\Sigma)$ of the annulus plaque $\Sigma$ is isotopic to a core $c(V)$ of $V$ in $M(r)$.
Recall that up to isotopy, $\omega$ can be embedded into $\Sigma$, further notice the fact that
up to isotopy, there is a unique essential simple closed curve on an open annulus,  therefore $c(\Sigma)$ and $\omega$ are isotopic in $M(r)$.
In summary, the periodic orbit $\omega$ of $X_t$ is also isotopic to a core $c(V)$ of $V$ in $M(r)$.
\end{proof}

\begin{proof}[The proof of Proposition \ref{p.typeII2}]
Recall that $R_7$ is obtained by gluing three annular sectors $\Sigma_1$, $\Sigma_2$ and $\Sigma_3$ to $B_7^n$ (see Figure \ref{f.B7NcapT}) along $f$ and $g$, $g$ and $h$, and $f$ and $h$ respectively.
One can easily observe that there exists a connected component $V_0$ in the  manifold
$W(R_7)=M(r)-\mbox{int}(N(R_7))$ which is a solid torus  so that $\partial_v V_0$ is the union of three circles with slope-$\infty$. Therefore, $W(R_7)$ does not carry any I-bundle so that
the I-bundle is coherent to $\partial_v W(R_7)$. By Proposition \ref{p.Anobracomp}, $R_7$ does not fully carry any
Anosov foliation.
\end{proof}

\subsection{Branched surfaces with type I annular sectors}\label{ss.BSI}
In Section \ref{s.Schwiderlist}, we have introduced $19$ types of branched surfaces in Schwider's list, so that
each of them contains a vacant type I annular sector $\Sigma$ in $V$ on $M_r$. They are $B_5$,
$B_6^I$ ($2$), $B_7^I$ ($3$), $R_7^I$ ($3$), $B_7^{\ast}$, $B_7^{\ast \ast}$ ($6$),
$B_8^{III}$, $B_{10}$ and $B_{11}$.

First of all, we build the following lemma.

\begin{lemma}\label{l.typeI}
Let $B$ be one of the $19$ types of branched surfaces in Schwider's list which contain a type I annular sector $\Sigma$, then in the  manifold $W(B)=M(r) -\mbox{int}(N(B))$, there is a solid torus connected component $V_0$ which is adjacent to one annular side of $\partial_h (N(\Sigma))$, namely $A_0$, so that there exists a meridian circle $m$ of $V_0$
in $\partial V_0$ which intersects to a core of $A_0$ twice.
\end{lemma}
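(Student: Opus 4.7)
The plan is a case-by-case analysis of the 19 branched surfaces in Schwider's list containing a type I annular sector $\Sigma$, organized around a uniform local model near $\Sigma$ and its branch curve. The key geometric feature driving the conclusion is that both boundary circles of $\Sigma$ are glued to the same branch curve $c \subset T$, producing a cusp-like 2-sheeted attachment at $c$ that forces a doubling in the meridian of the adjacent solid-torus complement component $V_0$.

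First I would set up the local picture near $\Sigma \cup c$: the regular neighborhood $N(\Sigma)$ has horizontal boundary $\partial_h N(\Sigma) = A_+ \sqcup A_-$ inside $V$, and vertical boundary $\partial_v N(\Sigma)$ consisting of two annular collars of $c$ on $T$. For each of the 19 branched surfaces, I would then locate the solid-torus component $V_0$ of $W(B)$ adjacent to one side $A_0 \in \{A_+, A_-\}$ of $\partial_h N(\Sigma)$ but not to the other. Schwider's constructions in Sections \ref{sss.Q1}--\ref{ss.Q9} show that in each case one side of $\Sigma$ bounds a ``vacant'' region $V_0 \subset V$ into which no other sector of $B$ intrudes, so $\partial V_0$ is a torus obtained by gluing $A_0$, an annular piece of $T$, and the two strips of $\partial_v N(\Sigma) \cap T$ along their common boundary circles. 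Since $V_0$ is carved out of the ambient solid torus $V$ by an embedded annulus (together with collars), inspection of each case confirms that $V_0$ is a solid torus.

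The crux is to construct the meridian $m$ of $V_0$. I would take a meridian disk $D$ of the ambient solid torus $V$ in $M(r)$, in general position with respect to $c$ and $\Sigma$. The 2-sheeted attachment of $\Sigma$ at $c$ means that near each intersection point of $\partial D$ with $c$, both sheets of $\Sigma$ contribute arcs to $D \cap \Sigma$ emanating from the same boundary point of $D$. A sub-disk $D_0$ of $D$ can then be singled out so that $D_0$ lies in $V_0$ and $\partial D_0$ consists of two arcs on $A_0$ (one from each sheet of $\Sigma$ at a common crossing point with $c$), joined by two short arcs on the annular piece of $T \cap \partial V_0$. Setting $m = \partial D_0$, we obtain a meridian of $V_0$ crossing $A_0$ in two arcs, hence meeting a core of $A_0$ in exactly two points.

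The principal obstacle is verifying that the sub-disk $D_0$ is well-defined in each of the 19 cases and that its boundary is indeed a primitive simple closed curve on $\partial V_0$ bounding a compressing disk of $V_0$. This requires checking that no other sectors of $B$ interfere with the local construction near $c$. The verification is uniform because the type I sector $\Sigma$ is, by construction, the only non-disk sector of $B \cap V$ bordering the vacant region $V_0$, so after possibly isotoping $D$ the two arcs on $A_0$ coming from the 2-sheeted attachment always close up into a single meridian disk of $V_0$.
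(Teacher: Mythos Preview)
Your approach is quite different from the paper's and, as written, has a genuine gap. The paper's proof is essentially two lines and hinges on an observation you never isolate: because $\Sigma$ is a \emph{type~I} sector (both ends attached to the same branch curve), the vacant component $V_0$ has \emph{connected} vertical boundary $\partial_v V_0$---a single cusp annulus rather than two. The paper then invokes the ambient hypothesis that $B$ fully carries an Anosov lamination (this is the standing assumption being contradicted in Proposition~\ref{p.typeI}, even though it is not restated in the lemma) and applies Proposition~\ref{p.Anobracomp}: of the three permitted shapes for a component of $W(B)$, only item~2 has connected vertical boundary, and item~2 is precisely the solid torus whose cusp core runs twice around the core of $V_0$. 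That is the whole argument.

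Your direct construction of a meridian sub-disk $D_0\subset D$ is not justified. At a single crossing of $\partial D$ with $c$ you obtain two arcs of $D\cap\Sigma$, but you give no reason why those two arcs return to the \emph{same} crossing so as to bound a bigon in $D$; in general they terminate at other crossings of $\partial D\cap c$, and the component of $D\cap V_0$ you obtain need not be a disk with exactly two arcs on $A_0$. More fundamentally, the ``exactly twice'' is not a consequence of the local two-sheeted picture at $c$: the number of times a meridian of $V_0$ meets the core of $A_0$ is governed by how $\Sigma$ sits globally in $V$, and in Schwider's constructions this tracks the geometric intersection of $c$ with the meridian of $V$ (quantities like $|4p-q|$ or $|p|$), which ranges over all integers $\ge 2$. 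So no purely geometric case-check can produce ``twice'' uniformly; the Anosov hypothesis, via Proposition~\ref{p.Anobracomp}, is what forces it. The real content you should extract is simply that $\partial_v V_0$ is a single annulus.
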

\begin{proof}
Since one of the two sides of $\Sigma$  is vacant, then there is a solid torus connected component $V_0$ of $W(B)$
so that it is adjacent to one side of $\partial_h (N(\Sigma))$,
which is an annulus. This annulus is named by $A_0$ suggested in the lemma.

Furthermore, notice that $B$ fully carries an Anosov lamination, and $\partial_v V_0$ is connected,
item $2$ of Proposition \ref{p.Anobracomp} induces that $V_0$ must satisfies that there exists a meridian circle $m$ of $V_0$
in $\partial V_0$ which intersects to a core of $A_0$ exactly twice.
\end{proof}

\begin{proposition}\label{p.typeI}
Let $B$ be one of the $19$ types of branched surfaces in Schwider's list which contain a type I annular sector $\Sigma$ in $V$ on the corresponding $3$-manifold $M(r)$. Then $B$ does not fully carry any Anosov lamination.
\end{proposition}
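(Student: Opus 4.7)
The plan is to assume, for contradiction, that $B$ fully carries an Anosov lamination $\cL$, obtained by splitting finitely many leaves of the weak stable foliation $\cF^s$ of some Anosov flow $X_t$ on $M(r)$, and then produce a conflict between the non-coorientability of $X_t$ forced by the type I sector and the coorientability of Goodman's flow $X_t^r$ forced by Lemma \ref{l.knotint}.

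The first step is to combine Lemma \ref{l.typeI} with item (2) of Proposition \ref{p.Anobracomp}: the adjacent solid torus $V_0\subset W(B)$ must be a twisted $I$-bundle over a M\"obius band $M_0$, whose core is isotopic in $V_0$ to a core $c(V_0)$. The annular plaque $A_0$ lies in some leaf $l$ of $\cL$, and reversing the splitting near $V_0$ by collapsing its $I$-fibers sends $A_0$ onto $M_0$ and $l$ onto a leaf $l^s$ of $\cF^s$ that contains an embedded M\"obius band. Since every leaf of an Anosov foliation is an immersed plane, annulus, or M\"obius band, $l^s$ itself is a M\"obius band, so $l^s = W^s(\omega)$ for a periodic orbit $\omega$ of $X_t$ with non-orientable weak stable manifold. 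In particular $X_t$ is non-coorientable, and $\omega$ is isotopic inside $V_0$ to $c(V_0)$.

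The second step is to read off from Sections \ref{ss.Q6}--\ref{ss.Q9} (and the analogous analysis for $B_5$, $B_{10}$, $B_{11}$) that for every one of the nineteen branched surfaces in the statement, the $V_0$ supplied by Lemma \ref{l.typeI} is the \emph{exceptional} solid torus in $V$, so that $c(V_0)$ is isotopic to a core $c(V)$ of $V$. Composing with the previous isotopy, $\omega$ is isotopic to $c(V)$ in $M(r)$, and Lemma \ref{l.knotint} forces $r\in\ZZ$ together with a topological equivalence $X_t\sim X_t^r$. The contradiction is then immediate: $X_t^r$ is obtained from the coorientable suspension $X_t^0$ on $M(0)$ by Dehn--Fried--Goodman surgery, which preserves transverse orientability of the stable foliation; since topological equivalence also preserves it, $X_t$ would be coorientable, contradicting step one.

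The step I expect to be the main obstacle is the exceptional-solid-torus verification in step two. For the branched surfaces $B_6^I$, $B_7^I$, $R_7^I$, $B_7^\ast$, $B_7^{\ast\ast}$, $B_8^{III}$ this is essentially explicit in Sections \ref{ss.Q6}--\ref{ss.Q8}, but for $B_5$, $B_{10}$, and $B_{11}$ one has to inspect directly how the type I annular sector attached along the slope-$4$ loop on $T$ cuts $V$, and verify that the complementary component $V_0$ whose $\partial_v$ is a single annulus wrapping twice is exceptional. This is a small but unavoidable geometric check that must be carried out for each of these three branched surfaces.
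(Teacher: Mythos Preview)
Your approach is correct but takes a different route to the contradiction than the paper does. Both arguments share the same geometric core: Lemma~\ref{l.typeI} together with item~(2) of Proposition~\ref{p.Anobracomp} forces $V_0$ to be a twisted $I$-bundle over a M\"obius band, which produces a periodic orbit $\omega$ with non-orientable weak stable manifold and with $\omega$ isotopic to $c(V_0)\sim c(V)$. From here the paper proceeds more directly: it performs a DA surgery along $\omega$ and observes that, because $W^s(\omega)$ is a M\"obius band (a single separatrix), the resulting expanding attractor on $N_0\cong N$ has exactly \emph{one} boundary periodic orbit, which contradicts Theorem~\ref{t.YY} (the unique expanding attractor on $N$ has two). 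Your route instead feeds $\omega\sim c(V)$ into Lemma~\ref{l.knotint} as a black box to obtain $X_t\sim X_t^r$, and then reaches a contradiction via coorientability. This is valid, but it costs you one extra lemma not proved in the paper: that Dehn--Fried--Goodman surgery along $\gamma$ preserves transverse orientability of the stable foliation. (This is true---since $\pi_1(M(0)\setminus\gamma)$ surjects onto $\pi_1(M(r))$ and the orientation cocycle is trivial on the complement---but it needs to be said.) The paper's argument avoids this by extracting the contradiction one step earlier, inside what would be the proof of Lemma~\ref{l.knotint} itself.

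On your ``main obstacle'': the paper does not do the case-by-case check you anticipate. It argues uniformly that the intersection-number conclusion of Lemma~\ref{l.typeI} (the meridian of $V_0$ meets the core of $A_0$ twice) already rules out $V_0$ being the boundary-parallel piece, hence $V_0$ must be the exceptional solid torus containing the core of $V$; the complement $N_0=M(r)\setminus\mathrm{int}(V_0)$ is then automatically isotopic to $N$. So the verification you flag for $B_5$, $B_{10}$, $B_{11}$ is subsumed by this single observation rather than requiring separate inspection.
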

\begin{proof}
By Lemma \ref{l.typeI}, $V_0$ is an exceptional solid torus in $M(r)$. Therefore there does not
exist any other exceptional solid torus in $W(B)$. This implies that $V_0$ is a core solid torus in
$V$, i.e. $\partial V$ is a deformation retract of the path closure of $V-V_0$.
Therefore, $N_0$ and $N$ are isotopic in $M(r)$, where $N_0= M(r)- \mbox{int}(V_0)$.

Let $\lambda$ be an Anosov lamination fully carried by $B$ in $M(r)$ and $l$ be the boundary leaf
which contains an annulus plaque carried by $\Sigma$. Without loss of generality, we can assume that
there exists  a periodic orbit $\gamma$ of the associated Anosov flow $X_t$ so that
$l$ is obtained by splitting $W^s (\gamma)$ on the stable foliation of $X_t$. By Proposition \ref{p.Anobracomp}, $\gamma$ is isotopic to
the core of $V_0$.

 By doing DA surgery on $X_t$ along $\gamma$, we can get a flow $Y_t$
on $N_0$ so that,
\begin{enumerate}
  \item $N_0$ is homeomorphic  to $N$;
  \item $Y_t$ is transverse to $\partial N_0$;
  \item the maximal invariant set of $Y_t$ on $N_0$, $\Omega (Y_t)$, is an expanding attractor
  with a unique boundary periodic orbit.
\end{enumerate}
Item $1$ above is a consequence of the fact that $N_0$ and $N$ are isotopic in $M(r)$, which is proved in the first paragraph of the proof.
Item $3$ can be followed by the fact that $l$
is obtained by splitting $W^s (\gamma)$ on the stable foliation of $X_t$.

The existence of $Y_t$ on the figure-eight knot complement $N_0$ conflicts to
the conclusion of  Theorem \ref{t.YY} (see also Theorem 1.2 of \cite{YY}), which says that, up to topological equivalence, $N_0$ only carries a unique
expanding attractor, and moreover, there exist two boundary periodic orbits on this attractor.
Therefore, $B$ does not fully carry any Anosov lamination.
\end{proof}

\subsection{Branched surfaces obtained by splitting type II annular sectors}\label{ss.BSII}
We are left to discuss the branched surfaces in Schwider's list which abstractly
are obtained by splitting a type II annular sector on one of the branched surfaces listed in Section \ref{ss.BBII}. They are: $B_6^{II}$, $B_7^{II}$ ($2$), $R_7^{II}$ ($3$), $B_8^{II}$ ($2$) and $B_9^{II}$. In fact, none of them fully carries any Anosov lamination.
More precisely, we have

\begin{proposition}\label{p.spl}
Let $B$ be one of $B_6^{II}$, $B_7^{II}$ ($2$), $R_7^{II}$ ($3$), $B_8^{II}$ ($2$) and $B_9^{II}$ on the corresponding  $3$-manifold $M(r)$. Then $B$ does not fully carry any Anosov lamination.
\end{proposition}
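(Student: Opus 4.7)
The plan is to reduce each case to Proposition \ref{p.typeII1} or Proposition \ref{p.typeII2} via an \emph{unsplitting} operation on the branched surface. Each of the branched surfaces $B$ in the statement is obtained from a unique branched surface $B' \in \{B_6, B_7, B_8, B_9, R_7\}$ from Section \ref{ss.BBII} by attaching a type II annular sector parallel to an existing type II sector of $B'$; so $B$ contains two parallel type II annular sectors $\Sigma_1, \Sigma_2$. Together with a portion of $\partial V$, these two sectors cobound an exceptional solid torus component $V_0 \subset V$ of $W(B) = M(r) - \mbox{int}(N(B))$, and $V_0$ is vacant in the sense that it contains no leaves of any lamination fully carried by $B$.

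Suppose, aiming at a contradiction, that $\cL$ is an Anosov lamination fully carried by $B$, arising from the stable foliation $\cF^s$ of an Anosov flow $X_t$ on $M(r)$ by splitting finitely many leaves. Because $\Sigma_1, \Sigma_2$ are parallel, the boundary leaves of $\cL$ carried by them must be the two sides of a single split leaf $W^s(\gamma)$ of $\cF^s$, where $\gamma$ is a periodic orbit of $X_t$; moreover $V_0$ is precisely the $I$-bundle over $W^s(\gamma)$ inserted by the splitting. Collapse $V_0$ by identifying $\Sigma_1$ with $\Sigma_2$ via the $I$-bundle projection; this yields the unsplit branched surface $B'$ and transforms $\cL$ into a lamination $\cL'$ with $W^s(\gamma)$ restored as an interior leaf and all other splittings retained. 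Direct inspection of the new $I$-bundle structure---whose fibers over the collapsed sector are arcs transverse to $W^s(\gamma)$---shows that $\cL'$ is still an Anosov lamination (of the same flow $X_t$) and is now fully carried by $B'$.

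With $\cL'$ in hand, the propositions of Section \ref{ss.BBII} finish the argument. If $B' \in \{B_6, B_7, B_8, B_9\}$, i.e.\ $B \in \{B_6^{II}, B_7^{II}, B_8^{II}, B_9^{II}\}$, then the proof of Proposition \ref{p.typeII1}, applied to the Anosov lamination $\cL'$ fully carried by $B'$, forces $r \in \ZZ$; but as recorded in Section \ref{s.Schwiderlist}, each of these split branched surfaces is embedded only in $M(r) = M(q/p)$ with $p > 1$, so $r \notin \ZZ$, a contradiction. If $B' = R_7$, i.e.\ $B = R_7^{II}$, then the proof of Proposition \ref{p.typeII2}---which via Proposition \ref{p.Anobracomp} actually rules out any Anosov lamination fully carried by $R_7$, not only any Anosov foliation---applies directly to $\cL'$ and produces the contradiction.

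The main obstacle is the unsplitting step: rigorously verifying that the vacant solid torus $V_0$ really does arise as an $I$-bundle over the single split leaf $W^s(\gamma)$, that the collapse yields the desired $B'$ with a consistent $I$-bundle structure, and that $\cL'$ is genuinely fully carried by $B'$. This requires a careful local analysis of the branch structure near $\Sigma_1, \Sigma_2$, using the explicit descriptions from Section \ref{ss.BBII} of how the extra type II sector is attached, together with a re-examination of the proofs of Propositions \ref{p.typeII1} and \ref{p.typeII2} to confirm that they apply verbatim to Anosov laminations and not only to the corresponding Anosov foliations.
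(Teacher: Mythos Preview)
Your unsplitting strategy contains a genuine gap, and the gap is precisely at the step you flag as the main obstacle. The solid torus $V_0$ bounded by $\Sigma_1$ and $\Sigma_2$ is the \emph{exceptional} solid torus in $V$: its core is isotopic to the core of $V$, not to the slope-$\infty$ cores of $\Sigma_1,\Sigma_2$. Concretely, if $r=q/p$ with $p>1$, the meridian of $V_0$ meets a core of $\Sigma_i$ (equivalently, a core of the cusp annulus $A_i\subset\partial_v V_0$) exactly $p>1$ times. Hence $V_0$ is \emph{not} a product $S^1\times[-1,1]\times[0,1]$ with the cusps sitting as $S^1\times\{\pm1\}\times[0,1]$, and so it cannot be realized as the trivial $I$-bundle inserted by splitting a leaf $W^s(\gamma)$. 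Your collapse ``$\Sigma_1\to\Sigma_2$ along the $I$-fibers'' therefore does not exist inside $M(r)$, and in particular does not produce the embedded branched surface $B'$; indeed, the paper observes (and Schwider already notes) that if $V_0$ were regular then $B$ and $B'$ would coincide as embedded branched surfaces, contrary to their being listed separately.

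The irony is that the contradiction you are after is already present in your second paragraph, before you ever attempt the collapse. You correctly deduce (implicitly via Proposition~\ref{p.Anobracomp}) that if $B$ fully carried an Anosov lamination then $V_0$ would have to be a type~1 component, i.e.\ a product $I$-bundle coherent with its two cusp annuli. But $V_0$ is exceptional, so it is not of type~1; this is the contradiction, and the argument ends here. This is exactly the paper's proof: it observes that $V_0$ is the exceptional solid torus, notes that the meridian of $V_0$ meets the core of $A_1$ more than once, concludes that $W(B)$ carries no $I$-bundle coherent with $\partial_v W(B)$, and invokes Proposition~\ref{p.Anobracomp}. No reduction to Propositions~\ref{p.typeII1} or~\ref{p.typeII2} is needed.
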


\begin{proof}
Since $B$ abstractly can be obtained by splitting a type II annular sector on one of the branched surfaces $B'$ listed in Section \ref{ss.BBII}. Let $\Sigma_1$ and $\Sigma_2$ be the two annular sectors in $B$ obtained by splitting an annular sector $\Sigma'$ of $B'$.
 Then
 in the  manifold $W(B)=M(r) -\mbox{int}(N(B))$, there is a solid torus connected component $V_0$ which is adjacent to an annular component of $\partial_h (N(\Sigma_1))$, namely $A_1$, and an annular component of $\partial_h (N(\Sigma_2))$, namely $A_2$. It is important to observe that $V_0$ is the unique exceptional solid torus in
 $M(r)- (N\cup \Sigma_1\cup \Sigma_2)$. since otherwise in the view point of embedding, $B$  also can be obtained by splitting $B'$. Then in this case, we can think $B=B'$.\footnote{One can find a similar statement in Page $89$ of \cite{Sch}.}
 Therefore, there exists a meridian circle $c_m$  in the boundary of the closure of $V_0$ which intersects to a core circle of $A_1$ more than once, then $W(B)$ does not carry any I-bundle so that the I-bundle is coherent to $\partial_v W(B)$. By proposition \ref{p.Anobracomp}, $B$ does not carry any Anosov lamination.
\end{proof}

\subsection{End of the proof}\label{ss.proof}
Now we can finish the proof of the main theorem of the paper.

\begin{proof}[The proof of Theorem \ref{t.main}]
Let $X_t$ be an Anosov flow on $M(r)$ and $\cF^s$ be the corresponding  stable foliation.
Then $\cF^s$ should be fully carried by one of branched surfaces in Schwider's list, say $B$.
Due to Proposition \ref{p.disktype}, \ref{p.typeII1}, \ref{p.typeII2}, \ref{p.typeI} and \ref{p.spl}, we have
\begin{enumerate}
  \item $r$ should satisfy that $r\in \ZZ$;
  \item when $r\in \ZZ$, $B$ can only  be one of $B_6$, $B_7$, $B_8$, $B_9$. Moreover, there exists a periodic orbit $\omega$ in $X_t$ which is isotopic to a core $c(V)$ of $V$ in $M(r)$.
\end{enumerate}

As the direct consequence of item 1 above, when $r\notin \ZZ$, $M(r)$ does not carry any Anosov flow. Item 2 of the theorem is proved.

When $r\in \ZZ$, by item 2 above and Lemma \ref{l.knotint}, $X_t$ is topologically equivalent to $X_t^r$. Item 1 of the theorem is proved.
\end{proof}

\vskip 1cm

\noindent Bin Yu

\noindent {\small School of Mathematical Sciences}

\noindent{\small Tongji University, Shanghai 200092, CHINA}

\noindent{\footnotesize{E-mail: binyu1980@gmail.com }}

\end{document}